\theoremstyle{plain}
\newtheorem{theorem}{Theorem}
\newtheorem{corollary}[theorem]{Corollary}
\newtheorem{lemma}[theorem]{Lemma}
\newtheorem{proposition}[theorem]{Proposition}
\theoremstyle{remark}
\newtheorem{remark}[theorem]{Remark}
\numberwithin{equation}{section}
\numberwithin{theorem}{section}
\numberwithin{figure}{section}
\newcounter{AssumptionCounter}
\newcommand{\R}{\mathbb{R}}
\newcommand{\Domain}{\Omega}
\newcommand{\Dim}{d}
\newcommand{\Normal}{\mathsf{n}}
\newcommand{\Tangent}{\mathsf{t}}
\newcommand{\dt}{}
\newcommand{\Grad}{\nabla}
\newcommand{\SymGrad}{\Grad_{\!S}}
\newcommand{\Div}{\mathrm{div}}
\newcommand{\Norm}[1]{\| #1 \|}
\newcommand{\Err}[2]{\texttt{ERR}(#1,#2)}
\newcommand{\bbU}{\mathbb{U}}
\newcommand{\bbP}{{\mathbb{P}}}
\newcommand{\bbD}{\mathbb{D}}
\newcommand{\bbH}{\mathbb{H}}
\newcommand{\bbY}{\mathbb{Y}}
\newcommand{\bbX}{\mathbb{X}}
\newcommand{\bbS}{\mathbb{S}}
\newcommand{\calE}{\mathcal{E}}
\newcommand{\calD}{\mathcal{D}}
\newcommand{\calL}{\mathcal{L}}
\newcommand{\calP}{\mathcal{P}}
\newcommand{\calS}{\mathcal{S}}
\newcommand{\calI}{\mathcal{I}}
\newcommand{\calJ}{\mathcal{J}}
\newcommand{\sfs}{\mathsf{s}}
\newcommand{\sft}{\mathsf{t}}
\newcommand{\Mesh}{\mathfrak{T}}
\newcommand{\Faces}{\mathfrak{F}}
\newcommand{\Vertices}{\mathfrak{V}}
\newcommand{\sfT}{\mathsf{T}}
\newcommand{\sfF}{\mathsf{F}}
\newcommand{\sfv}{\mathsf{v}}
\newcommand{\sfh}{\mathsf{h}}
\newcommand{\sfk}{\mathsf{k}}
\newcommand{\Poly}[1]{P_{#1}}
\newcommand{\Lagr}[2]{S_{#1}^{#2}}
\newcommand{\Jump}[1]{\llbracket #1 \rrbracket}
\newcommand{\Avg}[1]{\{\!\!\{ #1 \}  \!\! \}}
\newcommand{\Tot}{\text{tot}}
\begin{document}

\title[Inf-sup stable discretization of the Biot's equations]{Inf-sup stable discretization of the \\quasi-static Biot's equations in poroelasticity}

\author[C.~Kreuzer]{Christian Kreuzer}
\address{TU Dortmund \\ Fakult{\"a}t f{\"u}r Mathematik \\ D-44221 Dortmund \\ Germany}
\email{christian.kreuzer@tu-dortmund.de}

\author[P.~Zanotti]{Pietro Zanotti}
\address{Universit\`{a} degli Studi di Pavia, Dipartimento di Matematica, 27100, Pavia, Italy}
\email{pietro.zanotti@unipv.it}

\keywords{Inf-sup stability, quasi-static Biot's equations, poroelasticity, quasi-optimality, robustness, a priori analysis}

\subjclass[2010]{Primary 65M15, 65M60; Secondary 74F10, 76S05}

%
%
%

\begin{abstract}
We propose a new full discretization of the Biot's equations in
poroelasticity. The construction is driven by the inf-sup theory, which we
recently developed. It builds upon the four-field formulation of the
equations obtained by introducing the total pressure and the total
fluid content. We discretize in space with Lagrange finite elements
and in time with backward Euler. We establish inf-sup stability and
quasi-optimality of the proposed discretization, with robust constants
with respect to all material parameters. We further construct an interpolant
showing how the error decays for smooth
solutions. 
\end{abstract} 

\maketitle

\section{Introduction}
\label{S:introduction}
This paper is the second one in a series initiated by \cite{Kreuzer.Zanotti:23+}, regarding the analysis and the discretization of the quasi-static Biot's equations in poroelasticity. (See \eqref{E:BiotProblem} below for the statement of the problem). The series centers around the use of the inf-sup theory for the stability and the error analysis, with the aim of highlighting the possible advantages stemming from the proposed approach, which appears to be new in this context.  

The inf-sup theory is a framework for the analysis of general linear variational problems. The main result therein is the so-called Banach-Ne\u{c}as theorem (see, e.g., \cite[Section~25.3]{Ern.Guermond:21b}), that characterizes the well-posedness of such problems. Successful applications of the Banach-Ne\u{c}as theorem additionally provide a two-sided stability estimate, entailing that the space of all possible solutions is
isomorphic to that of all possible data, cf. Theorem~\ref{T:well-posedness-weak-formulation} below.  Such an estimate is of
special interest, as it is the starting point for the derivation of sharp a posteriori error estimates, since it establishes the equivalence of error and residual, cf. \cite[Section~4.1.4]{Verfuerth:13}. Moreover, if the same approach applies also to the discretization at hand, then the resulting stability estimate is the starting point for the derivation of sharp a priori error estimates, see \cite{Babuska:70} for conforming discretizations and \cite{Berger.Scott.Strang:72} for nonconforming ones. The inf-sup theory is useful also in the design of robust preconditioners \cite{Hiptmair:06,Mardal.Winther:11} and in the convergence analysis of some adaptive procedures \cite{Feischl:22}.

The inf-sup theory is well-established for the analysis and the
discretization of stationary linear equations. We refer to \cite{Ern.Guermond:21b}
for an overview with several examples. The situation is substantially
different for evolution equations, that are usually analyzed by other
techniques. While the application of the inf-sup theory to the heat equation has been recently considered by various authors (see, e.g.,
\cite[Chapter~71]{Ern.Guermond:21c} and the references therein), we
are aware of only a few results for other prototypical problems, like
the wave and the Schr\"{o}dinger equation \cite{Steinbach.Zank:22}.       

The quasi-static Biot's equations in poroelasticity are not an
exception in this respect. In fact, on the one hand, some authors have
used the inf-sup theory in the analysis and the discretization of the
stationary problem resulting from the semi-discretization in time, see
e.g. \cite{Hong.Kraus:18,Khan.Zanotti:22,Lee.Mardal.Winther:17}. On
the other hand, in the quasi-static case, we are not aware of any
result obtained by this approach. For the a posteriori analysis, Li
and Zikatanov \cite{Li.Zikatanov:22} used, in a sense, an equivalent
argument. For the a priori analysis, all papers we are aware of build
upon a different argument, namely an energy estimate that seems to go
back to a seminal contribution of \v{Z}en\'{\i}\v{s}ek
\cite{Zenisek:84}. A far by exhaustive list includes
\cite{Berger.Bordas.Kay.Tavener:15,Burger.Kumar.Mora.RuizBaier.Verma:21,Chen.Luo.Feng:13,Kanschat.Riviere:18,Lee:16,Phillips.Wheeler:08,Rodrigo.Gaspar.Hu.Zikatanov:16,Yi.13}. 

To our best knowledge, our series is the first contribution making a
systematic use of the inf-sup theory in the design and in the error
analysis of a discretization of the quasi-static Biot's equations. Our
first paper \cite{Kreuzer.Zanotti:23+} is devoted to the analysis of
the equations. It establishes the well-posedness as well as a two-sided
stability estimate, which is robust with respect to all material
parameters.
The latter contributions and the functional setting
distinguish our results from previous ones by \v{Z}en\'{\i}\v{s}ek
\cite{Zenisek:84} and Showalter \cite{Showalter:00}. In particular, we
consider an equivalent four-field formulation of the equations, that
is obtained from the original one by introducing the so-called total
pressure and total fluid content. In \cite{Kreuzer.Zanotti:23+} we prove also that, in certain circumstances, additional regularity in space of the data imply corresponding additional regularity in space of the solution. Establishing a similar result in time is more challenging, due to possible
singularities at the initial time;~compare e.g. with
\cite{Murad.Thomee.Loula:96,Showalter:00} and the discussion in
Remark~\eqref{R:regularity-shift-time} below.  Further 
previous contributions to the regularity theory
are in \cite{Botti.Botti.DiPietro:21,Yi:17}. 

In this paper we propose a backward Euler discretization in time and
an abstract discretization in space of the Biot's equations. We establish the well-posedness
and a two-sided stability estimate via the inf-sup theory, by mimicking
the argument in \cite{Kreuzer.Zanotti:23+}. Then we consider a simple
realization of the abstract discretization in space, making use of
Lagrange finite elements for all variables. We prove a
quasi-optimal a priori error estimate, meaning that the error is equivalent to (i.e. bounded from above and below by) the best error. To our best knowledge, it is the first time that such a result is
established for a discretization of the Biot's equations.  

The error notion we consider is motivated by the inf-sup theory and it
is relatively involved, as all variables are coupled in a nontrivial
way. Therefore, we further elaborate on our error estimate, by showing
that the best error can be bounded by a sum of decoupled best errors in standard norms, that are much easier to investigate. All constants in our
results are robust with respect to all material parameters and do not
require any additional regularity beyond the one guaranteed by the
well-posedness of the equations. Finally, we establish first-order
convergence, with respect to the space and time meshsize, for sufficiently smooth solutions. 

\subsection*{Organization} In section~\ref{S:Biot-equations} we state
the equations and recall the main results from
\cite{Kreuzer.Zanotti:23+}. Section~\ref{S:abstract-discretisation}
establishes the stability of an abstract
discretization. Section~\ref{S:concrete-discretization} is devoted to
the a priori error analysis for an exemplary concrete
discretization, which is also tested numerically in section~\ref{S:numerics}.

\subsection*{Notation} 
Throughout the paper, we denote by
$\Norm{\cdot}_{\bbX}$ the norm of a Hilbert space $\bbX$. The dual space ${\bbX}^*$ acts on ${\bbX}$ through the pairing $\left\langle
\cdot, \cdot \right\rangle_{\bbX}$. We denote by $L^2(\bbX)$, $H^1(\bbX)$ and $C^0(\bbX)$, respectively, the
Bochner spaces of all $L^2$, $H^1$ and $C^0$ functions mapping the time
interval $[0, T]$ into $\bbX$. For a measurable set $\Domain \subseteq \R^\Dim$, we adopt the simplified
notation $(\cdot, \cdot)_\Domain$ and $\Norm{\cdot}_\Domain$ for the
scalar product and the norm in $L^2(\Domain)$. We write $a \lesssim b$
and $a \eqsim b$ when there are constants $0 < \underline{c} \leq
\overline{c}$, possibly different at different occurrences, such that
$a \leq \overline{c}\,b$ and $\underline{c} \,b \leq a \leq
\overline{c}\,b$, respectively. The hidden
constants are independent of the material parameters involved in the
equations. The dependence on other relevant quantities  
is addressed case by case, see e.g. Remark~\ref{R:hidden-constants}.  

\section{Inf-sup theory for the Biot's equations}
\label{S:Biot-equations}

This section introduces the Biot's equations and summarizes some
results from \cite{Kreuzer.Zanotti:23+}, that are
useful for the construction and the analysis of the discretization in
the next sections.  

\subsection{Initial-boundary value problem}
\label{SS:initial-boundary-value-problem}
Let $\Domain \subseteq \R^\Dim$, $1 \leq \Dim \leq 3$, be a bounded domain with polyhedral and Lipschitz continuous boundary. The
flow of a Newtonian fluid 
inside a linear elastic porous medium located in $\Domain$, in the
time interval $(0, T)$ with $T > 0$, is modeled by the quasi-static
Biot's equations as follows  
\begin{subequations}
\label{E:BiotProblem}
\begin{equation}
\label{E:BiotProblem-equations}
\begin{alignedat}{3}
 -\Div( 2 \mu \SymGrad u + (\lambda \Div u  - \alpha p)\mathrm{I}) &= f_u \qquad &\text{in $\Domain \times (0,T)$  }\\
\partial_t (\alpha \Div u + \sigma p) - \Div(\kappa \Grad p) &= f_p \qquad  &\text{in $\Domain \times (0,T)$}.
\end{alignedat}
\end{equation}
The first equation states the momentum balance for the elastic porous
medium, whereas the second one is the mass balance for the fluid. 

The unknown functions in the equations are the displacement $u:
\Domain \to \R^\Dim$ of the elastic porous medium and the pressure $p:
\Domain \to \R$ of the fluid. The differential operator $\SymGrad$ is
the symmetric part of the gradient and $\mathrm{I}$ is $\Dim \times
\Dim$ identity tensor. The material parameters, denoted by Greek
letters, are the Lam\'{e} constants $\mu, \lambda > 0$, the 
Biot-Willis constant $\alpha~>~0$, the constrained specific storage
coefficient $\sigma \geq 0$ and the hydraulic conductivity $\kappa >
0$. Consistently with \cite{Kreuzer.Zanotti:23+}, we assume that all
parameters are constant in $\overline \Domain \times [0,T]$ for
simplicity. We refer to \cite[Remark~2.1]{Kreuzer.Zanotti:23+} for a
discussion on possible extensions. 

We are interested in the initial-boundary value problem obtained by prescribing also the initial condition 
\begin{equation}
\label{E:BiotProblem-initialcondition}
(\alpha \Div u + \sigma p)_{|t=0} = \ell_0 \quad \text{in} \quad \Domain
\end{equation}
as well as the boundary conditions 
\begin{equation}
\label{E:BiotProblem-boundary-conditions}
\begin{alignedat}{2}
u &= 0
&\quad &\text{on} \quad \Gamma_{u,E} \times (0,T)\\
(2 \mu \SymGrad u + (\lambda \Div u - \alpha p) I)\Normal &= g_{u}&
\quad& \text{on} \quad \Gamma_{u,N} \times (0,T) \\
p &= 0
&\quad &\text{on} \quad \Gamma_{p,E} \times (0,T)\\
\kappa \Grad p \cdot \Normal &= g_{p}
&\quad &\text{on} \quad \Gamma_{p,N} \times (0,T)
\end{alignedat}
\end{equation}
where $\Gamma_{u,E} \cup \Gamma_{u,N} = \partial \Domain =
\Gamma_{p,E} \cup \Gamma_{p,N}$ and $\Gamma_{u,E} \cap \Gamma_{u,N} =
\emptyset = \Gamma_{p,E} \cap \Gamma_{p,N}$. The letter $\Normal$
denotes the outward unit normal vector on $\partial \Domain$. The
subscripts `$E$' and `$N$' are intended to assist the reader in
distinguishing the portions of the boundary with essential and natural
conditions. 
\end{subequations} 

We point out that different statements of the initial and of the
boundary conditions are sometimes met in the literature. We refer to
\cite[Remark~2.2-2.3]{Kreuzer.Zanotti:23+} for a more extensive
discussion on this point.

\subsection{Weak Formulation and well-posedness}
\label{SS:weak-formulation}
For convenience, we introduce a compact notation for the differential
operators in the Biot's equations \eqref{E:BiotProblem}, namely  
\begin{equation}
\label{E:abstract-operators-concrete}
\calE := - \Div (2\mu  \SymGrad) 
\qquad \qquad
\calD := \Div
\qquad \qquad
\calL := -\Div(\kappa \Grad ).
\end{equation}
Notice that $\calE$ and $\calL$ act on $u$ and $p$, respectively, in
\eqref{E:BiotProblem-equations}. Comparing also with the boundary
conditions, it looks reasonable that the regularity of $u$ in space in
a weak formulation can be described via 
\begin{equation}
\label{E:abstract-spaces-displacement}
\bbU := \begin{cases}
	H^1(\Domain)^\Dim/\mathrm{RM} & \text{if $\Gamma_{u,N} =
          \partial \Domain$}
        \\
	H^1_{\Gamma_{u,E}}(\Domain)^\Dim & \text{otherwise} 
\end{cases}
\end{equation}
with $\mathrm{RM}$ denoting the rigid body motions. Analogously, for
the regularity of $p$ in space, we consider 
\begin{equation}
\label{E:abstract-spaces-pressure}
\bbP := \begin{cases}
H^1(\Domain)\cap L^2_0(\Domain) & \text{if $\Gamma_{p,N} = \partial
	\Domain$} \\
H^1_{\Gamma_{p,E}}(\Domain) \cap L^2_0(\Domain) & \text{if
  $\Gamma_{p,N} \neq \partial \Domain, \Gamma_{u,E} = \partial
  \Domain, \sigma = 0$} 
\\
H^1_{\Gamma_{p,E}}(\Domain) & \text{otherwise}
\end{cases} 
\end{equation}
where $L^2_0(\Domain) = \{ q \in L^2(\Domain) \mid \int_{\Domain} q = 0
\}$. We refer to \cite[Remark~2.5]{Kreuzer.Zanotti:23+} for a
motivation of the nonstandard definition of $\bbP$ in the second
case. 

\begin{remark}[Notation]
\label{R:abstract-notation}
We are aware of the fact, that the abstract notation introduced here
(and the subsequent one for all related spaces and operators) makes
the reading possibly harder. Still, it has the advantage that
all combinations of the boundary conditions (as well as the
critical case $\sigma = 0$) can be treated at the same time. In our
view, this is quite important, because our approach to the analysis
and the discretization of the Biot's equations is mainly the same in
all cases, but each case may require subtle minor modifications. 
\end{remark}

The action of the divergence on $u$ in \eqref{E:BiotProblem-equations}
indicates that also the space 
\begin{equation}
\label{E:abstract-spaces-pressure-total}
\bbD := \calD(\bbU) = \begin{cases}
L^2_0(\Domain) &\text{if  $\Gamma_{u,E} = \partial \Domain$}\\
L^2(\Domain) &\text{otherwise}
\end{cases}
\end{equation}
plays a relevant role in the Biot's equations. Similarly, since $p$ is
involved in \eqref{E:BiotProblem-equations} also without the action
of any differential operator in space, we repeatedly make use of 
\begin{equation}
\label{E:abstract-spaces-fluid-content}
\overline{\bbP} = \begin{cases}
L^2_0(\Domain) & \text{if
	$\Gamma_{p,N} = \partial \Domain$ or $\Gamma_{u,E} = \partial
        \Domain, \sigma = 0$}
      \\ 
L^2(\Domain) & \text{otherwise,}
\end{cases}
\end{equation}
where the closure is taken with respect to the $L^2(\Domain)$-norm. An
important point for our analysis is that both $\bbD$ and
$\overline{\bbP}$ are subspaces of $L^2(\Domain)$, but their mutual
relation depends on the boundary conditions and on the parameter
$\sigma$. Therefore, it is useful introducing   
\begin{equation*}
\label{E:projections}
\calP_{\bbD}: L^2(\Omega) \to \bbD
\qquad \text{and} \qquad
\calP_{\overline{\bbP}}: L^2(\Domain) \to \overline{\bbP},
\end{equation*}
the $L^2(\Domain)$-orthogonal projections onto $\bbD$ and $\overline{\bbP}$, respectively.

\begin{remark}[Functional setting]
\label{R:functional-setting}
The diagram in Figure~\ref{F:abstract-spaces-diagram} summarizes the
relation among the spaces and the operators introduced up to this
point. In addition, we denote by $i: \bbP\to \overline{\bbP}$  the
inclusion operator and $\calD^*$ and $i^*$ are the adjoint operators of
$\calD$ and $i$, respectively. The spaces $\bbD$ and $\overline{\bbP}$
are identified with their duals via the $L^2(\Domain)$-scalar
product. Thus, the square on the right side of the diagram involves the Hilbert triplet 
\begin{equation}
\label{E:Hilbert-triplet}
\bbP 
\hookrightarrow 
\overline{\bbP} 
\equiv 
\overline{\bbP}^*
\hookrightarrow
\bbP^*.
\end{equation} 
\end{remark}

\begin{figure}[ht]
	\[
	\xymatrixcolsep{4pc}
	\xymatrixrowsep{4pc}
	\xymatrix{
		\bbU^* 
		\ar@{<-}[r]^{\calE}
		\ar@{<-}[d]^{\calD^*}
		&
		\bbU
		\ar[d]_{\calD}
		&
		L^2(\Domain)
		\ar[dl]_{\calP_\bbD}
		\ar[dr]^{\calP_{\overline{\bbP}}}
		&
		\bbP
		\ar[r]^{\calL}
		\ar[d]^{i}
		&
		\bbP^*
		\ar@{<-}[d]_{i^*}\\
		\bbD^*
		\ar@3{-}[r]
		&
		\bbD
		&&
		\overline{\bbP}
		\ar@3{-}[r]&
		\overline{\bbP}^*
		}
	\]
	\caption{\label{F:abstract-spaces-diagram} Spaces and operators describing the regularity in space for the weak formulation \eqref{E:BiotProblem-weak-formulation} of the Biot's equations. The triple lines on the bottom indicate identification via the $L^2(\Domain)$-scalar product.}
\end{figure}

With this preparation, we are in position to recall the weak
formulation of the initial-boundary value problem
\eqref{E:BiotProblem} introduced in
\cite[section~2.3]{Kreuzer.Zanotti:23+}, namely 
\begin{equation}
\label{E:BiotProblem-weak-formulation}
\begin{alignedat}{2}
\calE u + \calD^* p_\Tot &= \ell_u \qquad && \text{in $L^2(\bbU^*)$}\\
\lambda \calD u - p_\Tot - \alpha \calP_\bbD p &= 0 \qquad && \text{in $L^2(\bbD)$}\\
\alpha \calP_{\overline \bbP} \calD u + \sigma p - m &= 0 \qquad && \text{in $L^2(\overline{\bbP})$}\\
\partial_t m + \calL p &= \ell_p \qquad  && \text{in $L^2(\bbP^*)$}\\
m(0) &= \ell_0 \qquad && \text{in $\bbP^*$}.
\end{alignedat}
\end{equation} 
The loads $\ell_u \in L^2(\bbU^*)$ and $\ell_p \in L^2(\bbP^*)$ result from the data $f_u$ and $f_p$ in the equations \eqref{E:BiotProblem-equations} as well as $g_u$ and $g_p$ in the boundary conditions \eqref{E:BiotProblem-boundary-conditions}.

\begin{remark}[Auxiliary variables]
\label{R:auxiliary-variables}
Compared to \eqref{E:BiotProblem-equations}, the weak formulation \eqref{E:BiotProblem-weak-formulation} involves two additional unknown variables, namely the total pressure $p_\Tot$ and the total fluid content $m$ defined by the second and third equation, respectively. Introducing these variables is not strictly necessary for our analysis, but it substantially simplifies the definition of the space $\bbY_1$ in \eqref{E:trial-space} below. The use of the total pressure was observed in \cite{Lee.Mardal.Winther:17} to help also the design of robust linear solvers. 
\end{remark}

The main result in \cite{Kreuzer.Zanotti:23+} states that
\eqref{E:BiotProblem-weak-formulation} is uniquely solvable within the closure $\overline{\bbY}_1$ of the space
\begin{equation}
\label{E:trial-space}
\bbY_1 := L^2(\bbU) \times L^2(\bbD) \times L^2(\bbP) \times \left( L^2(\overline{\bbP}) \cap H^1(\bbP^*) \right)
\end{equation}
with respect to the norm
\begin{equation}
\label{E:trial-norm}
\begin{split}
&\| (\widetilde u, \widetilde p_\Tot, \widetilde p, \widetilde m) \|_1^2 := \\
&\quad \phantom{+}\int_{0}^{T} \left( \Norm{\widetilde u}^2_\bbU 
+ 
\dfrac{1}{\mu}\Norm{\widetilde p_\Tot}_{\Domain}^2 
+ 
\Norm{\partial_t \widetilde m + \calL \widetilde p}^2_{\bbP^*} \right)\dt 
+ 
\Norm{\widetilde m(0)}^2_{\bbP^*}\\ 
&\quad +\int_{0}^T \left( \dfrac{1 }{\mu + \lambda} \Norm{\lambda \calD \widetilde u - \widetilde p_\Tot - \alpha \calP_\bbD \widetilde p}^2_{\Domain} 
+ 
\gamma\Norm{\alpha \calP_{\overline \bbP} \calD \widetilde u + \sigma \widetilde p - \widetilde m}^2_{\Domain} \right)\dt.
\end{split}
\end{equation}
Here $\bbU$ and $\bbP$ are equipped with the energy norm 
\begin{equation}
\label{E:norms}
\Norm{\cdot}_\bbU^2 = \left\langle \calE \cdot, \cdot \right\rangle_\bbU
\qquad
\text{and}
\qquad
\Norm{\cdot}_\bbP^2 = \left\langle \calL \cdot, \cdot \right\rangle_\bbP
\end{equation}
and the parameter $\gamma$ is given by
\begin{align}
\label{E:gamma}
\gamma :=
\begin{cases}
\min\left\lbrace  \dfrac{\mu + \lambda}{\alpha^2}, \dfrac{1}{\sigma}\right\rbrace   & \text{if $\sigma > 0$ and $\overline{\bbP} \subseteq \bbD$}
\\[8pt]
\dfrac{\mu + \lambda}{\alpha^2} + \dfrac{1}{\sigma} & \text{if $\sigma > 0$ and $\overline{\bbP} \nsubseteq \bbD$}\\[8pt]
\dfrac{\mu + \lambda}{\alpha^2} &\text{if $\sigma = 0$}.
\end{cases}
\end{align} 
Taking the closure is indeed necessary, because $\bbY_1$ is not closed with respect to $\Norm{\cdot}_1$, cf. \cite[Proposition~4.4]{Kreuzer.Zanotti:23+}.

\begin{theorem}[Well-posedness of the weak formulation]
\label{T:well-posedness-weak-formulation}
For all possible data $(\ell_u, \ell_p, \ell_0) \in L^2(\bbU^*) \times L^2(\bbP^*) \times \bbP^*$, the weak formulation \eqref{E:BiotProblem-weak-formulation} has a unique solution $y_1 = (u,p_\Tot, p, m) \in \overline{\bbY}_1$, which satisfies the two-sided stability bound
\begin{equation*}
\label{E:well-posedness}
\begin{split}
\|y_1\|_{1}^2 \eqsim \int_{0}^T \left( \Norm{\ell_u}^2_{\bbU^*} + \Norm{\ell_p}^2_{\bbP^*} \right)\dt + \Norm{\ell_0}^2_{\bbP^*}.
\end{split}
\end{equation*}
Moreover, we have $m \in C^0(\bbP^*)$ as well as the norm equivalence
\begin{equation*}
\label{E:well-posedness-equivalent-norm}
\|y_1\|_{1}^2 \eqsim \|y_1\|_{1}^2 + \Norm{m}^2_{L^\infty(\bbP^*)} + \int_0^T \left( \lambda\|\calD u\|^2_{\Domain} + \gamma^{-1}\| p\|_{\Domain}^2 \right) \dt.
\end{equation*}
All hidden constants depend only on $\Domain$ and $T$.
\end{theorem}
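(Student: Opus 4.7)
The plan is to realize \eqref{E:BiotProblem-weak-formulation} as a single linear equation $b(y_1,y_2)=F(y_2)$ on $\overline{\bbY}_1\times\bbY_2$ and to apply the Banach--Ne\v{c}as theorem. The test space $\bbY_2$ must be chosen so that its (topological) dual, equipped with the Riesz norm, coincides with $L^2(\bbU^*)\times L^2(\bbP^*)\times\bbP^*$, i.e. the space in which the data $(\ell_u,\ell_p,\ell_0)$ live; a natural candidate has the same Cartesian structure as \eqref{E:trial-space}, so that each equation of \eqref{E:BiotProblem-weak-formulation} pairs with one test component. Continuity of $b$ in $\Norm{\cdot}_1\otimes\Norm{\cdot}_2$ is then immediate from the very definition of \eqref{E:trial-norm}: every summand in $\Norm{\cdot}_1$ dominates one piece of $b$.

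The real work is the inf-sup estimate $\sup_{y_2}b(y_1,y_2)/\Norm{y_2}_2\gtrsim\Norm{y_1}_1$, which I would establish by constructing an explicit test function $y_2^\star=y_2^\star(y_1)$. The construction mirrors a formal energy argument: pair \eqref{E:BiotProblem-weak-formulation} line (a) with $u$, line (b) with $(\mu+\lambda)^{-1}(\lambda\calD u-p_\Tot-\alpha\calP_\bbD p)$, line (c) with $\gamma$ times $(\alpha\calP_{\overline\bbP}\calD u+\sigma p-m)$, and line (d) with $p$; use the algebraic identity $\langle\partial_t m,p\rangle_{\bbP}=\frac{d}{dt}\langle m,p\rangle_{\overline{\bbP}}-\langle m,\partial_t p\rangle$ together with \eqref{E:BiotProblem-weak-formulation}(c) to trade the (otherwise unavailable) time derivative of $p$ for a time derivative of $m$ plus boundary terms in time that are controlled by \eqref{E:BiotProblem-weak-formulation}(e). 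Integrating in $(0,T)$ and summing produces, up to a constant depending only on $\Domain$ and $T$, each summand of $\Norm{y_1}_1^2$. The principal obstacle and the reason for the non-obvious definition \eqref{E:gamma} of $\gamma$ is the \emph{robustness} of the weighting: the scalars in front of each equation must be arranged so that no material parameter appears as a prefactor, and the three cases in \eqref{E:gamma} correspond exactly to whether \eqref{E:BiotProblem-weak-formulation}(b) or \eqref{E:BiotProblem-weak-formulation}(c), or both, control the coupling $\alpha\calP_\bbD p$ and $\sigma p$. Uniqueness and the upper bound $\Norm{y_1}_1^2\lesssim\int_0^T(\Norm{\ell_u}_{\bbU^*}^2+\Norm{\ell_p}_{\bbP^*}^2)\dt+\Norm{\ell_0}_{\bbP^*}^2$ follow at once; the reverse inequality is obtained by testing against smooth data.

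For the second part, the regularity $m\in L^2(\overline\bbP)\cap H^1(\bbP^*)$ built into \eqref{E:trial-space} combined with the Lions--Magenes embedding for the Hilbert triplet \eqref{E:Hilbert-triplet} yields $m\in C^0(\bbP^*)$ together with the pointwise identity $\tfrac{d}{dt}\Norm{m}_{\bbP^*}^2=2\langle\partial_t m,m\rangle_{\bbP}$. Integrating this identity, using \eqref{E:BiotProblem-weak-formulation}(d)--(e), and absorbing via Young's inequality gives $\Norm{m}_{L^\infty(\bbP^*)}^2\lesssim\Norm{y_1}_1^2$ with a constant depending on $T$. Finally, the two extra terms $\lambda\Norm{\calD u}_\Domain^2$ and $\gamma^{-1}\Norm{p}_\Domain^2$ are recovered by substituting the algebraic relations \eqref{E:BiotProblem-weak-formulation}(b)--(c) to write $\lambda\calD u=p_\Tot+\alpha\calP_\bbD p+(\text{residual})$ and $\sigma p-m=-\alpha\calP_{\overline\bbP}\calD u+(\text{residual})$, applying the triangle inequality, and using the three cases in \eqref{E:gamma} to convert the resulting $\alpha^2/(\mu+\lambda)$ and $\sigma$ factors into $\gamma^{-1}$. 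This closes the equivalence of norms and completes the statement.
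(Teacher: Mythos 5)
Your outline is right at the level of strategy: realize \eqref{E:BiotProblem-weak-formulation} as $b(y_1,y_2)=F(y_2)$ with trial space $\overline{\bbY}_1$ and test space $\bbY_2$, verify boundedness and inf-sup, and invoke Banach--Ne\v{c}as; the paper's argument (delegated to \cite{Kreuzer.Zanotti:23+} and mirrored in the discrete proof of Lemma~\ref{L:inf-sup}) has exactly this shape. The second part (continuity $m\in C^0(\bbP^*)$ via the Hilbert triplet \eqref{E:Hilbert-triplet} and the Lions--Magenes rule $\tfrac{d}{dt}\Norm{m}_{\bbP^*}^2 = 2\langle\partial_t m,\calL^{-1}m\rangle_\bbP$) is also the paper's device. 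But the central step, the inf-sup estimate, is where your proposal does not go through, and the reason is instructive.

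Your candidate test function is incompatible with the trial norm in two places. First, for line~(a) you pair with $u$ alone; you then recover $\Norm{u}_\bbU^2$ but not $\mu^{-1}\Norm{p_\Tot}_\Domain^2$, which is a separate summand in $\Norm{\cdot}_1$. The paper instead pairs with $u+\calE^{-1}\calD^*p_\Tot$, whose extra term produces $\Norm{\calD^*p_\Tot}_{\bbU^*}^2 \eqsim \mu^{-1}\Norm{p_\Tot}_\Domain^2$ via the surjectivity of $\calD$. Second and more seriously, for line~(d) you pair with $n=p$. That choice fails on both sides of the inf-sup estimate: on the lower bound, $\langle\partial_t m+\calL p,p\rangle = \langle\partial_t m,p\rangle + \Norm{p}_\bbP^2$ and you then need $\partial_t p$ to integrate by parts, but $p$ lies only in $L^2(\bbP)$ (and in $\overline{\bbY}_1$ it may not even be in $L^2(\bbP)$), so this derivative is simply not available and cannot be "traded" via line~(c) without assuming extra regularity of $u$ and $p$ in time; on the upper bound, $\Norm{y_2^\star}_2$ would then contain $\int_0^T\Norm{p}_\bbP^2$, which is \emph{not} controlled by $\Norm{y_1}_1$ --- the trial norm only sees $\Norm{\partial_t m+\calL p}_{\bbP^*}^2$ and the coupled residuals, never $\Norm{p}_\bbP^2$ itself. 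The correct fourth test component (cf. the discrete analogue $\calL_\sfs^{-1}(2\widetilde M+\mathrm{d}_\sft(\widetilde M,\widetilde M_0)+\calL_\sfs\widetilde P)$ in Section~\ref{SS:inf-sup}) is $\calL^{-1}(2m+\partial_t m+\calL p)$: it produces $\Norm{\partial_t m+\calL p}_{\bbP^*}^2$ directly, gives $2\langle\partial_t m,\calL^{-1}m\rangle = \tfrac{d}{dt}\Norm{m}_{\bbP^*}^2$ for the $L^\infty(\bbP^*)$ control of $m$, and has $\bbP$-norm bounded by $\Norm{m}_{\bbP^*}+\Norm{\partial_t m+\calL p}_{\bbP^*}$, both of which \emph{are} controlled by $\Norm{y_1}_1$ (modulo the $T$-dependent constant). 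Without this choice, the classical energy-style pairing you propose gives an a~priori estimate for the solution of the equations, but not the inf-sup inequality for an arbitrary trial function, which is what Banach--Ne\v{c}as requires.
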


\begin{proof}
Combine \cite[Theorem~3.5]{Kreuzer.Zanotti:23+} with \cite[Proposition~4.1]{Kreuzer.Zanotti:23+}.
\end{proof}

Although we omit the proof of
Theorem~\ref{T:well-posedness-weak-formulation}, it is worth roughly
summarizing how it is obtained. Indeed, we shall make use of a similar
argument in order to verify the well-posedness of the discretization
introduced in the next section,
cf. Theorem~\ref{T:well-posedness-discretization} below.  

The weak formulation \eqref{E:BiotProblem-weak-formulation} can be
viewed as a special instance of the following linear variational
problem: given $\ell \in \bbY_2^*$, find $y_1 \in \overline \bbY_1$
such that 
\begin{equation}
\label{E:linear-variational-problem}
b(y_1, y_2) = \left\langle \ell, y_2\right\rangle_{\bbY_2} \qquad \forall y_2 \in \bbY_2.
\end{equation}
The test space is obtained by collecting all possible test functions
for \eqref{E:BiotProblem-weak-formulation}, namely 
\begin{equation*}
\label{E:test-space}
\bbY_2 := L^2(\bbU) \times L^2(\bbD) \times L^2(\overline{\bbP}) \times L^2(\bbP) \times \bbP
\end{equation*}
and it is equipped with the norm 
\begin{equation}
\label{E:test-norm}
\begin{split}
\Norm{(v, q_\Tot, q, n, n_0)}_2^2 &:= \int_0^T \Big ( \Norm{v}^2_\bbU +  \Norm{n}^2_{\bbP} \Big)\dt + \Norm{n_0}^2_\bbP\\
& \quad+ \int_{0}^T \left( (\mu + \lambda) \Norm{q_\Tot}_{\Domain}^2 + \gamma^{-1}
\Norm{q}^2_{\Domain} \right)\dt .
\end{split}
\end{equation}
The bilinear form $b: \overline \bbY_1 \times \bbY_2 \to \R$ is
defined according to the left-hand side of
\eqref{E:BiotProblem-weak-formulation} by
\begin{equation}
\label{E:BiotProblem-abstract-form}
\begin{split}
&b(\widetilde y_1, y_2)
:= \int_0^T \Big( \left\langle \calE \widetilde u +
\calD^* \widetilde p_\Tot, v \right\rangle_\bbU + \left\langle
\partial_t \widetilde m + \calL \widetilde p, n \right\rangle_\bbP
\Big)\dt + \left\langle \widetilde m(0), n_0 \right\rangle_\bbP
\\
&\quad + \int_0^T \Big ( \left( \lambda \calD \widetilde u - \widetilde
p_\Tot - \alpha \calP_\bbD\widetilde p, q_\Tot\right)_{\Domain} +
\left( \alpha \calP_{\overline \bbP}\calD \widetilde u + \sigma \widetilde p - \widetilde m, q
\right)_{\Domain}   \Big )\dt
\end{split} 
\end{equation}
for $\widetilde y_1 = (\widetilde u, \widetilde p_\Tot, \widetilde p, \widetilde m) \in \overline \bbY_{1}$ and $(v, q_\Tot, q, n, n_0) \in \bbY_2$.  

The so-called Banach-Ne\u{c}as theorem characterizes the
well-posedness of the linear variational problem in terms of
boundedness, inf-sup stability and nondegeneracy of the form $b$, see
e.g. \cite[theorem~25.9]{Ern.Guermond:21b}. These properties are
verified in \cite[section~3]{Kreuzer.Zanotti:23+}. Their combination
implies the well-posedness of \eqref{E:BiotProblem-weak-formulation}
as well as the two-sided stability bound in
Theorem~\ref{T:well-posedness-weak-formulation}.

\begin{remark}[Trial functions]
\label{R:trial-functions}
As in \eqref{E:trial-norm} and \eqref{E:BiotProblem-abstract-form}, we
use hereafter the superscript `$\sim$' to distinguish a general trial
function in $\overline{\bbY}_1$ from the solution of the weak
formulation~\eqref{E:BiotProblem-weak-formulation}. 
\end{remark}

\begin{remark}[Functions and functionals]
\label{R:functions-functionals}
Owing to Remark~\ref{R:functional-setting}, we often identify the
functions in $\bbD$ and $\overline{\bbP}$ with their Riesz
representatives in $\bbD^*$ and $\overline{\bbP}^*$ and vice
versa. For instance, the term $\calD^* p_\Tot$ from the first equation
in \eqref{E:BiotProblem-weak-formulation} and even the space
$L^2(\overline{\bbP}) \cap H^1(\bbP^*)$ proposed for the total fluid
content must be interpreted in this way. As usual, we omit to
explicitly indicate the Riesz isometries, accepting some ambiguity, so
as to alleviate the notation. We apply the same principles to the
discretization in the next sections. 
\end{remark}

\section{Abstract inf-sup stable discretization}
\label{S:abstract-discretisation}
In this section we design a discretization of the weak formulation \eqref{E:BiotProblem-weak-formulation} of the initial-boundary value problem for the Biot's equations \eqref{E:BiotProblem}. The space discretization is challenging, due to the nontrivial coupling of the variables and the various differential operators acting on them. Therefore, we initially work with a general discretization, so as to allow for the maximal flexibility. We discuss a concrete realization in section~\ref{S:concrete-discretization} below. The time discretization seems less problematic, because \eqref{E:BiotProblem-weak-formulation} involves only one time derivative. Hence, we directly make a concrete choice, namely the backward Euler scheme.

The space discretization in this section builds upon a number of assumptions that must be verified case by case. The set of our assumptions is identified by special tags with the dedicated enumeration (H1), (H2), etc. With a small abuse, we actually include among the assumptions also the definition of some relevant constants. In those cases, the size (and not just the existence) of the constants is the property to be verified in each concrete example.

The main result in this section is the well-posedness established in Theorem~\ref{T:well-posedness-discretization}. We do not attempt at analyzing the error at this level of generality. Indeed, we believe that the estimates we would obtain either require too many assumptions or are too much abstract to be of interest. Thus, we prefer to discuss the error analysis for the concrete example in section~\ref{S:concrete-discretization}. 

\begin{remark}[Notation for the discretization]
\label{R:notation-discretization}
In general, we mark all spaces and operators related to the
discretization in space by the subscript `$\sfs$' and those related to
the discretization in time by the subscript `$\sft$'. The combination
`$\sfs\sft$' of the two subscripts identifies the full discretization
in space and time. To alleviate the notation, we use capitol letters (in place of subscripts) to distinguish the functions involved in the discretization from those related to the original Biot's equations.
\end{remark}

\subsection{Abstract discretisation in space}
\label{SS:abstract-discretization-space}
The general concept of our discretization in space consists in replacing all spaces and operators in Figure~\ref{F:abstract-spaces-diagram} by finite-dimensional counterparts, while preserving the structure of the diagram therein, cf. Figure~\ref{F:discrete-spaces-diagram}. In order to discretize the displacement and the pressure, we consider two finite-dimensional linear spaces
\begin{equation}
\label{E:abstract-spaces-discrete-displacement-pressure}
 \bbU_\sfs \qquad\text{and}\qquad \bbP_\sfs 
\end{equation}
i.e. discrete counterparts of the spaces $\bbU$ in \eqref{E:abstract-spaces-displacement} and $\bbP$ in \eqref{E:abstract-spaces-pressure}. We replace the operators $\calE$ and $\calL$ in \eqref{E:abstract-operators-concrete} by positive definite and self-adjoint linear operators 
\begin{equation}
\label{E:abstract-operators-discrete-elliptic}
\calE_\sfs : \bbU_\sfs  \to \bbU_\sfs ^* 
\qquad\text{and}\qquad 
\calL_\sfs : \bbP_\sfs  \to \bbP_\sfs ^*.
\end{equation}
In analogy with \eqref{E:norms}, we equip the two spaces with the energy norms
\begin{equation}
\label{E:norms-discrete}
\|\cdot\|_{\bbU_\sfs }^2 := \langle \calE_\sfs  \cdot, \cdot \rangle_{\bbU_\sfs }
\qquad\text{and}\qquad 
\|\cdot\|_{\bbP_\sfs }^2 := \langle \calL_\sfs  \cdot, \cdot \rangle_{\bbP_\sfs }.
\end{equation}
Then, the dual norms on $\bbU_\sfs ^*$ and $\bbP_\sfs ^*$ are given by 
\begin{equation}
\label{E:norms-dual-discrete}
\begin{aligned}
\Norm{\cdot}^2_{\bbU^*_\sfs } 
&:= \sup_{V \in \bbU_\sfs } \dfrac{\left\langle \cdot, V\right\rangle_{\bbU_\sfs } }{\Norm{V}_{\bbU_\sfs }} \;
= \langle \cdot, \calE_\sfs ^{-1} \cdot \rangle_{\bbU_\sfs } \\
\Norm{\cdot}^2_{\bbP^*_\sfs } 
&:= \sup_{N \in \bbP_\sfs } \dfrac{\langle \cdot, N\rangle_{\bbP_\sfs } }{\Norm{N}_{\bbP_\sfs }} \;
= \langle \cdot, \calL_\sfs ^{-1} \cdot \rangle_{\bbP_\sfs }.
\end{aligned}  
\end{equation}

Notice that $\bbU_\sfs $ and $\bbP_\sfs $ are not required to be conforming, i.e. subspaces of $\bbU$ and $\bbP$, respectively. Therefore, in order to define an error notion on the sums $\bbU + \bbU_\sfs $ and $\bbP + \bbP_\sfs $, we assume the following.
\begin{equation}
\label{A:extended-norms}
\stepcounter{AssumptionCounter}
\tag{H\arabic{AssumptionCounter}}
\begin{minipage}{0.82\hsize}
The norms $\Norm{\cdot}_\bbU$ and $\Norm{\cdot}_\bbP$ in \eqref{E:norms} can be extended to $\bbU + \bbU_\sfs $ and $\bbP + \bbP_\sfs $ with $ \|\cdot \|_{\bbU}\eqsim
\|\cdot\|_{\bbU_\sfs }$ in $\bbU_\sfs $ and $ \|\cdot \|_{\bbP}\eqsim
\|\cdot\|_{\bbP_\sfs }$ in $\bbP_\sfs $.
\end{minipage}
\end{equation} 

In order to discretize the space $\bbD$ in \eqref{E:abstract-spaces-pressure-total}, we consider a linear operator 
\begin{equation}
\label{E:abstract-operators-discrete-divergence}
\calD_\sfs : \bbU_\sfs  \to L^2(\Domain)
\end{equation}
i.e. a discrete counterpart of the divergence $\calD$ in \eqref{E:abstract-operators-concrete}. Then, we set
\begin{equation}
\label{E:abstract-spaces-discrete-total-pressure}
\bbD_\sfs  := \calD_\sfs (\bbU_\sfs ).
\end{equation}
The proof of Theorem~\ref{T:well-posedness-weak-formulation} given in
\cite{Kreuzer.Zanotti:23+} exploits the norm equivalence $\mu
\Norm{\calD^* \cdot}_{\bbU^*}^2 \eqsim \Norm{\cdot}_{\Domain}^2$ in
$\bbD$, that is nothing else than boundedness and surjectivity of
$\calD$, cf. \cite[Lemma~C40]{Ern.Guermond:21b}. (Note that here
$\bbD$ is identified with its dual via the $L^2(\Domain)$-scalar
product, cf. Remark~\ref{R:functional-setting}.) In order to reproduce
this property at the discrete level, we assume the following. 
\begin{equation}
\label{A:Stokes-inf-sup}
\stepcounter{AssumptionCounter}
\tag{H\arabic{AssumptionCounter}}
\begin{minipage}{0.75\hsize}
There are constants $c = c(\bbU_\sfs )$ and $C = C(\bbU_\sfs )$ with
$0 < c \leq C$ and such that $c \Norm{\cdot}_{\Domain}^2 \leq \mu
\Norm{\calD_\sfs ^* \cdot}^2_{\bbU_\sfs ^*} \leq C
\Norm{\cdot}_{\Domain}^2$ in $\bbD_\sfs$. 
\end{minipage}
\end{equation}
Actually, this prescribes that $\bbU_\sfs /\bbD_\sfs$ is a stable pair
for the discretization of the Stokes equations. Note that also
$\bbD_\sfs$ is identified here with its dual space.   

The proof of Theorem~\ref{T:well-posedness-weak-formulation} given in
\cite{Kreuzer.Zanotti:23+} exploits also the density of $\bbP$ in
$\overline{\bbP}$, that gives rise to the Hilbert triplet in
\eqref{E:Hilbert-triplet}. Since $\bbP_\sfs $ is finite-dimensional,
we are led to discretize $\overline{\bbP}$ by $\bbP_\sfs $ itself,
giving rise to the triplet
\begin{equation}
\label{E:Hilbert-triplet-discrete}
\bbP_\sfs  = \overline \bbP_\sfs  \equiv \overline\bbP_\sfs ^*
= \bbP_\sfs ^*.
\end{equation}
Also in this case, the identification of $\overline{\bbP}_\sfs $ with
its dual space is made via the $L^2(\Domain)$-scalar product. Hence,
the pairing $\left\langle \cdot, \cdot\right\rangle_{\bbP_\sfs }$ coincides
with $( \cdot, \cdot )_{\Domain} $ upon identifying the functionals in
$\bbP_\sfs ^*$ with their Riesz representative. 

\begin{remark}[Discretization of the Hilbert triplet]
\label{L:discretization-Hilbert-triplet}
As $\bbP_\sfs$ coincides with its closure, the above Hilbert triplet
is trivial from the algebraic viewpoint. Still, the spaces in it play
different roles and are equipped with different norms, depending on
their position, in analogy with \eqref{E:Hilbert-triplet}. To
alleviate the notation, we omit hereafter the symbol of the
closure.
\end{remark}

We denote by $\calP_{\bbD_\sfs }$ and $\calP_{\bbP_\sfs }$ the
$L^2(\Domain)$-orthogonal projections onto $\bbD_\sfs$ and $\bbP_\sfs
$, respectively. In particular, the adjoint $\calP_{\bbP_\sfs }^*$ of
the second projection maps functionals on $\bbP_\sfs $ into
functionals on $\bbP$. This observation is important for the
definition of the error notion in section~\ref{SS:well-posedness}
below, because the trial norm $\Norm{\cdot}_1$ in \eqref{E:trial-norm}
involves the dual norm $\Norm{\cdot}_{\bbP^*}$. Thus we assume the
following, in order to keep the norm of $\calP_{\bbP_\sfs }^*$ under
control. 
\begin{equation}
\label{A:L2-projection}
\stepcounter{AssumptionCounter}
\tag{H\arabic{AssumptionCounter}}
\begin{minipage}{0.8\hsize}
There are constants $c = c(\bbP_\sfs )$ and $C = C(\bbP_\sfs )$ with $0 < c \leq C$ and such that $c\Norm{\cdot}_{\bbP_\sfs ^*} \leq \Norm{\calP_{\bbP_\sfs }^*\cdot}_{\bbP^*} \leq C\Norm{\cdot}_{\bbP_\sfs ^*}$ in $\bbP_\sfs ^*$.
\end{minipage}
\end{equation}
The upper bound here is equivalent to the $\bbP$-stability of the projection $\calP_{\bbP_\sfs }$. The lower bound can be formulated as an inf-sup condition and it is equivalent to the existence of a bounded right inverse of $\calP_{\bbP_\sfs }$, cf. \cite[Proposition~3.5]{Khan.Zanotti:22}.

Finally, the proof of the inf-sup stability in Lemma~\ref{L:inf-sup} below for vanishing $\sigma$ makes use of the following assumption. 
\begin{equation}
\label{A:inclusion}
\stepcounter{AssumptionCounter}
\tag{H\arabic{AssumptionCounter}}
\begin{minipage}{0.55\hsize}
The inclusion $\bbP_\sfs  \subseteq \bbD_\sfs$ holds true if $\sigma=0$.
\end{minipage}
\end{equation}
Notice that the spaces $\overline{\bbP}$ and $\bbD$ in \eqref{E:abstract-spaces-fluid-content} and \eqref{E:abstract-spaces-pressure-total}, respectively, satisfy the same inclusion.

\begin{remark}[Spurious pressure oscillations]
\label{R:spurious-pressure-oscillations}
The combination of the assumptions \eqref{A:Stokes-inf-sup} and \eqref{A:inclusion} prescribes that $\bbU_\sfs /\bbP_\sfs $ is a stable pair for the discretization of the Stokes equations if $\sigma = 0$. This property was observed both numerically \cite{Haga.Osnes.Langtangen.12} and theoretically \cite{Mardal.Rognes.Thompson:21} to be important to prevent from spurious pressure oscillations in certain regimes. Indeed, forgetting the time derivative for a moment (or, more precisely, discretizing it in time) the Biot's equations \eqref{E:BiotProblem-equations} are close to the Stokes equations for vanishing $\sigma$ and small $\kappa$.
\end{remark} 

Figure~\ref{F:discrete-spaces-diagram} summarizes the relation among the spaces and the operators introduced in this section. As announced, the structure is exactly as in Figure~\ref{F:abstract-spaces-diagram}. We refer to Remark~\ref{R:functional-setting} for the details of the notation.

\begin{figure}[ht]
	\[
	\xymatrixcolsep{4pc}
	\xymatrixrowsep{4pc}
	\xymatrix{
		\bbU_\sfs ^* 
		\ar@{<-}[r]^{\calE_\sfs }
		\ar@{<-}[d]^{\calD_\sfs ^*}
		&
		\bbU_\sfs 
		\ar[d]^{\calD}
		&
		L^2(\Domain)
		\ar[dl]^{\calP_{\bbD_\sfs }}
		\ar[dr]_{\calP_{\bbP_\sfs }}
		&
		\bbP_\sfs 
		\ar[r]^{\calL_\sfs }
		\ar[d]^{i}
		&
		\bbP_\sfs ^*
		\ar@{<-}[d]^{i^*}\\
		\bbD_\sfs ^*
		\ar@3{-}[r]
		&
		\bbD_\sfs 
		&&
		\bbP_\sfs 
		\ar@3{-}[r]&
		\bbP_\sfs ^*
	}
	\]
	\caption{\label{F:discrete-spaces-diagram} Spaces and operators describing the regularity in space for the discretization of the Biot's equations. The triple lines on the bottom indicate identification via the $L^2(\Domain)$-scalar product.}
\end{figure}
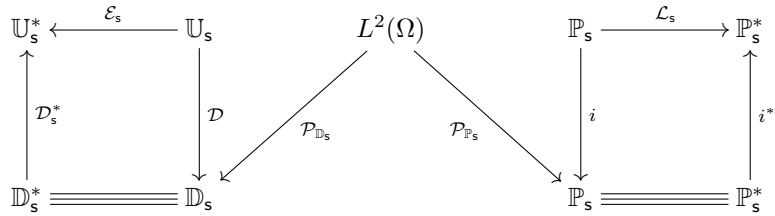

\subsection{Discretization in time}
\label{SS:abstract-discretisation-time}
As announced, we consider a simple first-order discretization in time, namely backward Euler. To this end, we introduce a partition
\begin{equation}
\label{E:mesh-time}
0=t_0<t_1<\cdots<t_J=T
\end{equation}
of the time interval $[0, T]$ with $J \geq 1$. We denote the local time intervals and their length by
\begin{equation}
\label{E:mesh-time-intervals}
I_j := [t_{j-1}, t_j]
\qquad \text{and} \qquad
|I_j| := t_j - t_{j-1} 
\end{equation}
respectively, for $j = 1,\dots, J$. 

For $\bbX \in \{ \bbU_\sfs, \bbD_\sfs, \bbP_\sfs \}$, we consider the space of piecewise time-constant functions on the above partition with values in $\bbX$
\begin{equation*}
\label{E:time-discretization-L2}
\bbS^0_\sft (\bbX) :=\big\{X \in L^\infty(\bbX) \mid X_{|I_j}=: X_j\in
\bbX,~j=1,\ldots,J\big\}. 
\end{equation*}
Whenever useful, we identify a function $X \in \bbS^0_\sft (\bbX)$ with the sequence of its values $(X_j)_{j=1}^J \subseteq \bbX$.

The space $\bbS^0_\sft(\bbX)$ is suitable for a first-order
discretization in time of $L^2(\bbX)$, i.e. of the first three
components in the trial \eqref{E:trial-space} and of the first four
components in the test \eqref{E:test-space}. The last component in the
trial space (the fluid content) is different, as it involves $H^1$-regularity in time. We discretize it in time by 
\begin{equation*}
\label{E:time-discretization-H1}
\bbS^0_\sft(\bbP_\sfs) \times \bbP_\sfs
\end{equation*}
where the second component plays the role of the initial value. Then we introduce a discrete counterpart $
\mathrm{d}_\sft : \bbS^0_\sft(\bbP_\sfs) \times \bbP_\sfs \to
\bbS^0_\sft (\bbP_\sfs^*)$ of the time derivative $\partial_t$, in the vein of the backward Euler scheme, i.e.
\begin{equation}
\label{E:discrete-time-derivative}
\mathrm{d}_\sft(\widetilde M, \widetilde M_0)_{I_j}
:=
\dfrac{\widetilde M_j - \widetilde M_{j-1}}{|I_j|}
\end{equation}
for $(\widetilde M, \widetilde M_0) \in \bbS^0_\sft(\bbP_\sfs) \times
\bbP_\sfs$ and for all $j = 1,\dots, J$.

The proof of Theorem~\ref{T:well-posedness-weak-formulation} given in \cite{Kreuzer.Zanotti:23+} and, in particular, the control of the point values of the total fluid content hinge on the following integration by parts rule $2\int_0^T \left\langle \partial_t \widetilde m, \calL^{-1}\widetilde m\right\rangle_{\bbP} \dt = \Norm{\widetilde m(T)}^2_{\bbP^*} - \Norm{\widetilde m(0)}^2_{\bbP^*} $, which holds true for $\widetilde m \in H^1(\bbP^*)$, cf. \cite[Lemma~64.40]{Ern.Guermond:21c}. The operator $\mathrm{d}_\sft $ defined above satisfies a similar relation.  

\begin{lemma}[Time discrete integration by parts rule]
	\label{L:discrete-integration-by-parts}
	We have
	\begin{equation*}
	\label{E:discrete-integration-by-parts}
	2\int_{0}^{t_j} \langle \mathrm{d}_\sft  (\widetilde M, \widetilde M_0),
	\calL_\sfs^{-1} \widetilde M \rangle_{\bbP_\sfs} \dt
	\geq 
	\|\widetilde M_j\|_{\bbP_\sfs^*}^2- \|\widetilde M_0\|_{\bbP_\sfs^*}^2
	\end{equation*}
	for all $(\widetilde M, \widetilde M_0) \in \bbS^0_\sft (\bbP_\sfs) \times \bbP_\sfs$ and $j=1,\dots, J$.
\end{lemma}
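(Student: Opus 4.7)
The plan is to reduce the integral to a telescoping sum by exploiting the piecewise-constant-in-time structure, and then to apply the polarization identity associated with the inner product $\langle \cdot, \calL_\sfs^{-1} \cdot \rangle_{\bbP_\sfs}$ on $\bbP_\sfs^*$, which by \eqref{E:norms-dual-discrete} induces the dual norm $\Norm{\cdot}_{\bbP_\sfs^*}^2$.

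First I would note that both $\widetilde M$ and $\mathrm{d}_\sft(\widetilde M, \widetilde M_0)$ are piecewise constant with respect to the partition \eqref{E:mesh-time}, so the integral collapses into the sum
\begin{equation*}
2\int_{0}^{t_j} \langle \mathrm{d}_\sft  (\widetilde M, \widetilde M_0), \calL_\sfs^{-1} \widetilde M \rangle_{\bbP_\sfs} \dt
= 2\sum_{k=1}^{j} \langle \widetilde M_k - \widetilde M_{k-1}, \calL_\sfs^{-1} \widetilde M_k \rangle_{\bbP_\sfs},
\end{equation*}
where the factors $|I_k|$ cancel by the definition \eqref{E:discrete-time-derivative}.

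Next I would use that $\langle \cdot, \calL_\sfs^{-1} \cdot \rangle_{\bbP_\sfs}$ is a genuine inner product on $\bbP_\sfs^* \equiv \bbP_\sfs$, since $\calL_\sfs$ is positive definite and self-adjoint. For such an inner product with associated norm $\Norm{\cdot}_{\bbP_\sfs^*}$, the standard polarization identity
\begin{equation*}
2\langle a - b, \calL_\sfs^{-1} a \rangle_{\bbP_\sfs}
= \Norm{a}_{\bbP_\sfs^*}^2 - \Norm{b}_{\bbP_\sfs^*}^2 + \Norm{a - b}_{\bbP_\sfs^*}^2
\end{equation*}
holds. Applying this termwise with $a = \widetilde M_k$, $b = \widetilde M_{k-1}$ yields
\begin{equation*}
2\sum_{k=1}^{j} \langle \widetilde M_k - \widetilde M_{k-1}, \calL_\sfs^{-1} \widetilde M_k \rangle_{\bbP_\sfs}
= \sum_{k=1}^{j} \left( \Norm{\widetilde M_k}_{\bbP_\sfs^*}^2 - \Norm{\widetilde M_{k-1}}_{\bbP_\sfs^*}^2 \right) + \sum_{k=1}^{j} \Norm{\widetilde M_k - \widetilde M_{k-1}}_{\bbP_\sfs^*}^2.
\end{equation*}
The first sum telescopes to $\Norm{\widetilde M_j}_{\bbP_\sfs^*}^2 - \Norm{\widetilde M_0}_{\bbP_\sfs^*}^2$, and the second sum is nonnegative, which gives the claim.

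There is essentially no obstacle: the proof is a straightforward discrete computation. The only point to keep in mind is the correct identification of the norm, namely that $\Norm{\cdot}_{\bbP_\sfs^*}^2 = \langle \cdot, \calL_\sfs^{-1}\cdot\rangle_{\bbP_\sfs}$ per \eqref{E:norms-dual-discrete}, so that the polarization identity is legitimate and the numerical dissipation $\sum_{k=1}^{j}\Norm{\widetilde M_k - \widetilde M_{k-1}}_{\bbP_\sfs^*}^2 \geq 0$ accounts for the inequality (as opposed to equality) in the statement; this is the familiar signature of the backward Euler scheme.
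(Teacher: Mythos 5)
Your proof is correct and follows essentially the same route as the paper: collapse the integral to a sum via the piecewise-constant structure, apply the polarization identity for the inner product $\langle \cdot, \calL_\sfs^{-1}\cdot\rangle_{\bbP_\sfs}$ (which the paper cites as \cite[eq.~(67.9)]{Ern.Guermond:21c}), telescope, and drop the nonnegative dissipation term.
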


\begin{proof}
We exploit \eqref{E:discrete-time-derivative}, rearrange terms and recall the second part of \eqref{E:norms-dual-discrete}. It results
	\begin{equation*}
	\begin{split}
	2\int_{0}^{t_j} \langle \mathrm{d}_\sft  (\widetilde M, \widetilde M_0),
	\widetilde \calL_\sfs^{-1}M \rangle_{\bbP_\sfs} \dt
	&=
	2 \sum_{k=1}^j\langle \widetilde M_k-\widetilde M_{k-1},
	\calL_\sfs^{-1}\widetilde M_k \rangle_{\bbP_\sfs}\\
	&=
	\Norm{\widetilde M_j}_{\bbP_\sfs^*}^2 
	+
	\sum_{k=1}^j\Norm{\widetilde M_k-\widetilde M_{k-1}}_{\bbP_\sfs^*}^2 
	- 
	\Norm{\widetilde M_0}_{\bbP_\sfs^*}^2
	\end{split}
	\end{equation*}
	cf. \cite[eq. (67.9)]{Ern.Guermond:21c}. This readily implies the claimed inequality.
\end{proof}

\subsection{Full discretization}
\label{SS:abstract-discretisation-full}
Combining the discretizations in space and time from the previous sections, we are in position to propose an abstract full discretization of the initial-boundary value problem \eqref{E:BiotProblem} for the Biot's equations. 

We consider the trial space 
\begin{equation}
\label{E:trial-space-discrete}
  \bbY_{1, \mathsf{st}}:=\bbS^0_{\mathsf{t}}(\bbU_\sfs )\times \bbS^0_\sft (\bbD_\sfs)\times
  \bbS^0_\sft (\bbP_\sfs )\times \bbS^0_\sft (\bbP_\sfs )\times \bbP_\sfs .
\end{equation}
Inspired by~\eqref{E:trial-norm} and taking the assumption \eqref{A:extended-norms} in section~\ref{SS:abstract-discretization-space} into account, we equip \(\bbY_{1,\sfs\sft}\) with
the norm
\begin{equation}
    \label{E:trial-norm-discrete}
    \begin{split}
      &\| (\widetilde U, \widetilde P_\Tot, \widetilde P, \widetilde M, \widetilde M_0) \|_{1,\sfs\sft}^2 := \\
      &\quad \phantom{+} \int_{0}^{T} \left( \Norm{\widetilde U}^2_\bbU + \dfrac{1}{\mu}\Norm{\widetilde P_\Tot}_{\Domain}^2 + \Norm{\mathrm{d}_\sft  (\widetilde M, \widetilde{M}_0) + \calL_\sfs  \widetilde P}^2_{\bbP_\sfs ^*} \right)\dt + \Norm{\widetilde M_0}^2_{\bbP^*_\sfs }\\
      &\quad  + \int_{0}^T \Big ( \dfrac{1 }{\mu + \lambda} \Norm{\lambda
          \calD_\sfs  \widetilde U - \widetilde P_\Tot - \alpha \calP_{\bbD_\sfs } \widetilde
          P}^2_{\Domain} + \gamma\Norm{\alpha \calP_{\bbP_\sfs } \calD_\sfs  \widetilde U +
          \sigma \widetilde P - \widetilde M}^2_{\Domain} \Big ) \dt.
    \end{split}
\end{equation}
  
\begin{remark}[Equivalent trial norm]
\label{R:equivalent-trial-norm}
According to the assumption \eqref{A:L2-projection} in section~\ref{SS:abstract-discretization-space}, we could equivalently replace the discrete dual norm $\Norm{\cdot}_{\bbP_\sfs ^*}$ in the definition of $\Norm{\cdot}_{1,\sfs\sft}$ by $\Norm{\calP_{\bbP_\sfs }^*\cdot}_{\bbP^*}$. All the results stated in section~\ref{SS:well-posedness} hold true also in this case, with the only difference that the hidden constants additionally depend on the constants in \eqref{A:L2-projection}. This observation is important in view of the definition of the error notion in section~\ref{SS:quasi-optimality}.
\end{remark}

For the test space, we proceed similarly and set
  \begin{equation}
    \label{E:test-space-discrete}
    \bbY_{2,\sfs\sft} := \bbS^0_\sft (\bbU_\sfs ) \times \bbS^0_\sft (\bbD_\sfs) \times
    \bbS^0_\sft (\bbP_\sfs ) \times \bbS^0_\sft (\bbP_\sfs ) \times
    \bbP_\sfs .
  \end{equation}
Recalling again the assumption \eqref{A:extended-norms} in section~\ref{SS:abstract-discretization-space}, we equip $\bbY_{2,\sfs\sft}$ with the norm \(\Norm{\cdot}_2\) in \eqref{E:test-norm}.

Let $(L_u, L_p, L_0) \in \bbS^0_\sft (\bbU_\sfs ^*) \times \bbS^0_\sft (\bbP^*_\sfs ) \times \bbP_\sfs^*$ be a discretization of the corresponding data $(\ell_u, \ell_p, \ell_0) \in L^2(\bbU^*)\times L^2(\bbP^*) \times \bbP^*$ in the weak formulation \eqref{E:BiotProblem-weak-formulation} of the Biot's equations. We consider the following full discretization of \eqref{E:BiotProblem-weak-formulation}: find $(U, P_\Tot, P, M, M_0) \in \bbY_{1, \sfs\sft}$ such that
\begin{equation}
\label{E:BiotProblem-discretization}
\begin{alignedat}{2}
\calE_\sfs  U + \calD_\sfs ^* P_\Tot &= L_u \qquad && \text{in $\bbS^0_\sft (\bbU_\sfs ^*)$}\\
\lambda \calD_\sfs  U - P_\Tot - \alpha \calP_{\bbD_\sfs } P &= 0 \qquad && \text{in $\bbS^0_\sft (\bbD_\sfs)$}\\
\alpha \calP_{\bbP_\sfs } \calD_\sfs  U + \sigma P - M &= 0 \qquad && \text{in $\bbS^0_\sft (\bbP_\sfs)$}\\
\mathrm{d}_\sft (M, M_0) + \calL_\sfs  P &= L_p \qquad  && \text{in $\bbS^0_\sft (\bbP_\sfs ^*)$}\\
M_0 &= L_0 \qquad && \text{in $\bbP_\sfs^*$}.
\end{alignedat}
\end{equation} 
Note that, also in this case, there are some ambiguities between functions and functionals, that can be clarified in the vein of Remark~\ref{R:functions-functionals}.

\begin{remark}[Two- and four-field formulation]
\label{R:2-4-field-formulation}
The weak formulation~\ref{E:BiotProblem-weak-formulation} involves four unknown variables but it can be equivalently rewritten as a two-field weak formulation of the initial-boundary value problem~\eqref{E:BiotProblem}, by eliminating the total pressure $p_\Tot$ and the total fluid content $m$, cf. \cite[Remark~2.7]{Kreuzer.Zanotti:23+}. Analogously, we could eliminate the discrete total pressure $P_\Tot$ and the discrete fluid content $M$ from \eqref{E:BiotProblem-discretization}. In this way, we would equivalently obtain a discretization of the two-field weak formulation, with trial and test spaces given by $\bbS^0_\sft (\bbU_\sfs ) \times \bbS^0_\sft (\bbP_\sfs ) \times \bbP_\sfs $. 
\end{remark}

Since we aim at establishing the well-posedness of \eqref{E:BiotProblem-discretization} via the inf-sup theory, it is convenient viewing it as an instance of the following linear variational problem: given $L \in \bbY_{2,\sfs\sft}^*$, find $Y_1 \in \bbY_{1, \sfs\sft}$ such that
\begin{equation}
\label{E:linear-variational-problem-discrete}
b_{\sfs\sft}(Y_1, Y_2) = \left\langle L, Y_2\right\rangle_{\bbY_{2,\sfs\sft}} \qquad \forall Y_2 \in \bbY_{2,\sfs\sft}.
\end{equation}
Of course, this can be seen as a discretization of \eqref{E:linear-variational-problem}. Here, the bilinear form \(b_{\sfs\sft}:\bbY_{1,\sfs\sft}\times
  \bbY_{2,\sfs\sft}\to\R\) is defined by
\begin{equation}
\label{E:BiotProblem-discrete-form}
\begin{split}
  &b_{\sfs\sft}(\widetilde Y_1, Y_2) :=\\
  &\qquad \phantom{+} \int_0^T \left( \langle \calE_\sfs  \widetilde U +
    \calD_\sfs ^* \widetilde P_\Tot, V \rangle_{\bbU_\sfs } + \langle
    \mathrm{d}_\sft  (\widetilde M, \widetilde M_0) + \calL_\sfs  \widetilde P, N \rangle_{\bbP_\sfs }
\right)\dt
+ \langle \widetilde M_0, N_0 \rangle_{\bbP_\sfs }
\\
& \qquad 
+ \int_0^T  \Big(( \lambda \calD_\sfs  \widetilde U - \widetilde
    P_\Tot - \alpha \calP_{\bbD_\sfs }\widetilde P, Q_\Tot)_{\Domain} +
  ( \alpha \calP_{\bbP_\sfs }\calD_\sfs  \widetilde U + \sigma \widetilde P - \widetilde M, Q
  )_{\Domain} \Big)\dt
\end{split} 
\end{equation}
for $\widetilde Y_1 = (\widetilde U, \widetilde P_\Tot, \widetilde P, \widetilde M,\widetilde M_0) \in
\bbY_{1,\sfs\sft}$ and $Y_2 = (V, Q_\Tot, Q, N, N_0) \in
\bbY_{2,\sfs\sft} $. 

A remarkable difference between \eqref{E:linear-variational-problem}
and \eqref{E:linear-variational-problem-discrete} is that, in the
latter one, we do not have to explicitly take the closure of the trial
space. Indeed, $\bbY_{1, \sfs\sft}$ is certainly closed, being
finite-dimensional. 

\subsection{Well-posedness}
\label{SS:well-posedness}
The goal of this section is to establish the well-posedness of the
discretization \eqref{E:BiotProblem-discretization} by means of the
inf-sup theory. Since the trial space $\bbY_{1, \sfs\sft}$ and the
test space $\bbY_{2,\sfs\sft}$ are finite-dimensional with equal
dimension, we can use a simplified version of the Banach-Ne\u{c}as
theorem, which does not require to verify the nondegeneracy of the
form $b_{\sfs\sft}$, see
e.g. \cite[Theorem~26.6]{Ern.Guermond:21b}. Therefore, the
well-posedness is equivalent to the properties verified in the next
two lemmas. 

  \begin{lemma}[Boundedness]\label{L:boundedness}
    The bilinear form $b_{\sfs\sft}$
    in~\eqref{E:BiotProblem-discrete-form} satisfies
    \begin{align*}
      \sup_{Y_2 \in \bbY_{2,\sfs\sft}} \dfrac{b_{\sfs\sft}(\widetilde Y_1, Y_2)}{\Norm{Y_2}_2} \lesssim \Norm{\widetilde Y_1}_{1,\sfs\sft}
    \end{align*}
    for all $\widetilde Y_1\in \bbY_{1,\sfs\sft}$. The hidden constant
    depends only on the constants in the assumptions
    \eqref{A:extended-norms} and \eqref{A:Stokes-inf-sup} in
    section~\ref{SS:abstract-discretization-space}.  
  \end{lemma}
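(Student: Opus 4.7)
The plan is to split the right-hand side of~\eqref{E:BiotProblem-discrete-form} into its six natural summands and apply Cauchy--Schwarz to each one, matching the weights so that the trial quantities land in the squared $\Norm{\cdot}_{1,\sfs\sft}$-norm and the test quantities in the squared $\Norm{\cdot}_2$-norm. The design of these two norms, together with the Stokes-type assumption \eqref{A:Stokes-inf-sup} and the norm equivalence \eqref{A:extended-norms}, is precisely what makes the pairing work.

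First, for the term $\langle \calE_\sfs \widetilde U, V \rangle_{\bbU_\sfs}$, I would use that $\calE_\sfs$ is self-adjoint and positive definite, so by Cauchy--Schwarz applied to the inner product induced by $\calE_\sfs$ one has $|\langle \calE_\sfs \widetilde U, V\rangle_{\bbU_\sfs}| \leq \|\widetilde U\|_{\bbU_\sfs}\|V\|_{\bbU_\sfs}$; the assumption \eqref{A:extended-norms} then lets me switch freely between $\|\cdot\|_{\bbU_\sfs}$ and $\|\cdot\|_{\bbU}$ on the test side. For the term $\langle \calD_\sfs^* \widetilde P_\Tot, V \rangle_{\bbU_\sfs}$ I would pass to the dual norm and invoke \eqref{A:Stokes-inf-sup} to get $\|\calD_\sfs^* \widetilde P_\Tot\|_{\bbU_\sfs^*} \lesssim \mu^{-1/2}\|\widetilde P_\Tot\|_\Omega$, which produces exactly the $\mu^{-1}\|\widetilde P_\Tot\|_\Omega^2$ contribution of $\Norm{\cdot}_{1,\sfs\sft}^2$.

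Next, the term $\langle \mathrm{d}_\sft(\widetilde M, \widetilde M_0) + \calL_\sfs \widetilde P, N \rangle_{\bbP_\sfs}$ is handled by Cauchy--Schwarz in the pairing between $\bbP_\sfs^*$ and $\bbP_\sfs$ (second line of \eqref{E:norms-dual-discrete}), then again by \eqref{A:extended-norms} to convert $\|N\|_{\bbP_\sfs}$ into $\|N\|_\bbP$. The initial-value pairing $\langle \widetilde M_0, N_0 \rangle_{\bbP_\sfs}$ is treated identically, yielding $\|\widetilde M_0\|_{\bbP_\sfs^*}\|N_0\|_\bbP$. The two remaining $L^2(\Domain)$-pairings are pure Cauchy--Schwarz: distribute the factor $(\mu+\lambda)^{-1/2}\cdot(\mu+\lambda)^{1/2}$ in the $Q_\Tot$-term and $\gamma^{1/2}\cdot\gamma^{-1/2}$ in the $Q$-term, so that the trial factors match the last line of \eqref{E:trial-norm-discrete} and the test factors match the last line of \eqref{E:test-norm}. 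After integration in time I apply Cauchy--Schwarz over $(0,T)$ to each product, collect, and use $\sum a_ib_i \leq (\sum a_i^2)^{1/2}(\sum b_i^2)^{1/2}$ on the six summands to arrive at the claimed bound.

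There is no essential obstacle: every term of $b_{\sfs\sft}$ was built so that some contribution of $\Norm{\cdot}_{1,\sfs\sft}^2$ and some contribution of $\Norm{\cdot}_2^2$ absorb it via Cauchy--Schwarz. The only place that requires more than an inner-product inequality is the coupling through $\calD_\sfs^*$, which is exactly why \eqref{A:Stokes-inf-sup} was imposed; likewise \eqref{A:extended-norms} is invoked precisely to handle the mismatch between the discrete energy norms on $\bbU_\sfs,\bbP_\sfs$ used in $\Norm{\cdot}_{1,\sfs\sft}$ and the continuous norms on $\bbU,\bbP$ used in $\Norm{\cdot}_2$. Consequently the hidden constant depends only on the constants in \eqref{A:extended-norms} and \eqref{A:Stokes-inf-sup}, as claimed.
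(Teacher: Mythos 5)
Your proposal is correct and takes essentially the same approach as the paper's proof, which is a one-line invocation of Cauchy--Schwarz on each term of $b_{\sfs\sft}$ combined with the energy-norm identities \eqref{E:norms-discrete} and the norm equivalences in \eqref{A:extended-norms} and \eqref{A:Stokes-inf-sup}; your write-up simply makes each of those applications explicit, including the correct use of the upper bound in \eqref{A:Stokes-inf-sup} to control $\|\calD_\sfs^*\widetilde P_\Tot\|_{\bbU_\sfs^*}$ by $\mu^{-1/2}\|\widetilde P_\Tot\|_\Omega$.
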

  
  \begin{proof}
    The claimed bound follows from the Cauchy-Schwarz
    inequality applied to each term in the definition of
    $b_{\sfs\sft}$, in combination with \eqref{E:norms-discrete} and
    with the norm equivalences in the assumptions
    \eqref{A:extended-norms} and \eqref{A:Stokes-inf-sup}. 
  \end{proof}

\begin{lemma}[Inf-sup stability]
  \label{L:inf-sup}
  The bilinear form $b_{\sfs\sft}$
  in~\eqref{E:BiotProblem-discrete-form} satisfies
\begin{equation*}
  \label{E:inf-sup}
\begin{split}
&\sup_{Y_2\in \bbY_{2,\sfs\sft}}
\frac{b_{\sfs\sft} (\widetilde Y_1,Y_2)}{\Norm{Y_2}_{2}}
\gtrsim\\
& \qquad\qquad (1+T)^{-\frac{1}{2}}
\left( \| \widetilde Y_1 \|_{1,\sfs\sft}^2 +
  \Norm{\widetilde M}^2_{L^\infty(\bbP_\sfs ^*)}
+ \int_{0}^T \left( \lambda
    \Norm{\calD_\sfs  \widetilde U}^2_{\Domain} + \gamma^{-1} \Norm{\widetilde P
    }^2_{\Domain}\right) \dt \right)^{\frac{1}{2}} .
\end{split}
\end{equation*}
for all \(\widetilde Y_1 = (\widetilde U, \widetilde P_\Tot,
\widetilde P, \widetilde M, \widetilde M_0) \in
\bbY_{1,\sfs\sft}\). The hidden constant depends only on the constants
in the assumptions \eqref{A:extended-norms} and
\eqref{A:Stokes-inf-sup} in
section~\ref{SS:abstract-discretization-space}. 
\end{lemma}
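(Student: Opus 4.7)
The plan is to mimic, at the fully discrete level, the continuous inf-sup argument from \cite{Kreuzer.Zanotti:23+}, replacing the integration by parts in time by Lemma~\ref{L:discrete-integration-by-parts} and the continuous structural properties of the operators and spaces by their discrete analogues encoded in the assumptions \eqref{A:extended-norms}--\eqref{A:inclusion}. For a fixed $\widetilde Y_1 = (\widetilde U, \widetilde P_\Tot, \widetilde P, \widetilde M, \widetilde M_0) \in \bbY_{1,\sfs\sft}$, I construct a test function $Y_2 \in \bbY_{2,\sfs\sft}$ as a linear combination, with carefully tuned weights, of several atomic choices, each one singling out one of the squared quantities on the right-hand side. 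Then I bound $\|Y_2\|_2$ above in terms of those same quantities and conclude by dividing.

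First, the atomic choices that recover $\|\widetilde Y_1\|_{1,\sfs\sft}^2$ are essentially dictated by the definition of $b_{\sfs\sft}$ in \eqref{E:BiotProblem-discrete-form}: take $V_1 = \widetilde U$ to produce $\int_0^T \|\widetilde U\|_{\bbU_\sfs}^2 \,dt$ via \eqref{E:norms-discrete} and \eqref{A:extended-norms}; invoke the discrete Stokes inf-sup \eqref{A:Stokes-inf-sup} timewise to pick $V_2 \in \bbS^0_\sft(\bbU_\sfs)$ whose divergence pairs with $\widetilde P_\Tot$ so as to reconstruct $\frac{1}{\mu}\|\widetilde P_\Tot\|_\Omega^2$; take
$$Q_\Tot = (\mu+\lambda)^{-1}\bigl(\lambda \calD_\sfs \widetilde U - \widetilde P_\Tot - \alpha \calP_{\bbD_\sfs}\widetilde P\bigr),\quad Q = \gamma\bigl(\alpha \calP_{\bbP_\sfs}\calD_\sfs \widetilde U + \sigma \widetilde P - \widetilde M\bigr)$$
to recover the last two squared terms in \eqref{E:trial-norm-discrete}; and take $N_1 = \calL_\sfs^{-1}(\mathrm{d}_\sft(\widetilde M, \widetilde M_0) + \calL_\sfs \widetilde P)$ together with $N_0 = \calL_\sfs^{-1}\widetilde M_0$ to bound the two dual norms in $\bbP_\sfs^*$, using \eqref{E:norms-dual-discrete}.

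Second, to bring in the extra $\|\widetilde M\|_{L^\infty(\bbP_\sfs^*)}^2$ and $\int_0^T \lambda \|\calD_\sfs \widetilde U\|_\Omega^2 + \gamma^{-1}\|\widetilde P\|_\Omega^2\,dt$ contributions, I fix $j^\star \in \{1,\dots,J\}$ with $\|\widetilde M_{j^\star}\|_{\bbP_\sfs^*} = \|\widetilde M\|_{L^\infty(\bbP_\sfs^*)}$ and add a further atomic test with $N = \calL_\sfs^{-1}\widetilde M \cdot \mathbf{1}_{[0, t_{j^\star}]}$ and $N_0' = -\calL_\sfs^{-1}\widetilde M_0$. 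Combining this with the fourth equation of \eqref{E:BiotProblem-discretization} (viewed on test side through $b_{\sfs\sft}$) and invoking Lemma~\ref{L:discrete-integration-by-parts} produces $\|\widetilde M_{j^\star}\|_{\bbP_\sfs^*}^2 - \|\widetilde M_0\|_{\bbP_\sfs^*}^2$ plus an integral of $\|\widetilde P\|_{\bbP_\sfs}^2$. The terms $\lambda\|\calD_\sfs \widetilde U\|_\Omega^2$ and $\gamma^{-1}\|\widetilde P\|_\Omega^2$ are then retrieved by using the second and third lines of the trial norm, namely by writing
$$\lambda \calD_\sfs \widetilde U = \bigl(\lambda\calD_\sfs \widetilde U - \widetilde P_\Tot - \alpha \calP_{\bbD_\sfs}\widetilde P\bigr) + \widetilde P_\Tot + \alpha \calP_{\bbD_\sfs}\widetilde P$$
and similarly $\sigma \widetilde P = (\alpha\calP_{\bbP_\sfs}\calD_\sfs \widetilde U + \sigma \widetilde P - \widetilde M) - \alpha\calP_{\bbP_\sfs}\calD_\sfs \widetilde U + \widetilde M$, so that the triangle inequality and the definitions \eqref{E:gamma} of $\gamma$ reduce these terms to combinations already controlled. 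For the critical regime $\sigma = 0$, assumption \eqref{A:inclusion} yields $\calP_{\bbP_\sfs}\widetilde P = \widetilde P$, allowing the constraint to be inverted for $\widetilde P$ modulo quantities already bounded, in full analogy with the continuous argument.

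The main obstacle will be the careful weighting of the atomic test functions so that $\|Y_2\|_2$, estimated via \eqref{E:test-norm}, is bounded by a fixed multiple of the right-hand side of the claimed inequality, while keeping all constants robust with respect to $\mu$, $\lambda$, $\alpha$, $\sigma$ and $\kappa$; here the case distinction in \eqref{E:gamma} must be tracked individually. The factor $(1+T)^{-1/2}$ arises because the extra atomic test producing $\|\widetilde M_{j^\star}\|_{\bbP_\sfs^*}^2$ is an $L^2$-in-time function whose norm scales like $T^{1/2}$ times a dual-norm quantity, and one has to balance this against the $L^\infty$-in-time contribution, exactly as in \cite[Proposition~4.1]{Kreuzer.Zanotti:23+}.
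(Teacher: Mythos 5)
Your proposal follows the same overall route as the paper: fix $\widetilde Y_1$, build a test function out of terms tailored to the structure of $b_{\sfs\sft}$, invoke Lemma~\ref{L:discrete-integration-by-parts} to absorb the discrete time derivative, select the index $j^\star$ at which $\|\widetilde M_{j^\star}\|_{\bbP_\sfs^*}$ attains its maximum, and finally bound $\|Y_2\|_2$ from above. The paper's single compound test (with fourth component $\calL_\sfs^{-1}(2\widetilde M + \mathrm{d}_\sft(\widetilde M,\widetilde M_0)+\calL_\sfs\widetilde P)\chi_j$) is, up to tuning, the same object as your sum of atomic tests, and your observation about the origin of the $(1+T)^{-1/2}$ factor and the role of \eqref{A:inclusion} for $\sigma=0$ matches the paper.

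Two local points in your sketch would, as written, derail the argument and need to be repaired. First, the sign of $N_0'$ is wrong: setting $N_0'=-\calL_\sfs^{-1}\widetilde M_0$ cancels (or worsens) the positive contribution $\langle\widetilde M_0,N_0\rangle=\|\widetilde M_0\|_{\bbP_\sfs^*}^2$ from the last equation, whereas that term is precisely what is needed to compensate the $-\|\widetilde M_0\|_{\bbP_\sfs^*}^2$ coming out of the discrete integration by parts. The paper keeps $N_0=+\calL_\sfs^{-1}\widetilde M_0$ in every copy of the test function. Second, pairing the fourth equation against $\calL_\sfs^{-1}\widetilde M$ produces the cross-term $\int (\widetilde P,\widetilde M)_\Domain\,\mathrm{d}t$, not "an integral of $\|\widetilde P\|_{\bbP_\sfs}^2$". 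This cross-term is in fact the heart of the proof: one must rewrite $\widetilde M=\alpha\calP_{\bbP_\sfs}\calD_\sfs\widetilde U+\sigma\widetilde P-\widetilde H_{\bbP_\sfs}$ so that the resulting $\alpha(\calD_\sfs\widetilde U,\widetilde P)_\Domain$ cancels against the identical term generated by the displacement test (which is why the paper uses the compound choice $V=\widetilde U+\calE_\sfs^{-1}\calD_\sfs^*\widetilde P_\Tot$ rather than treating $V_1,V_2$ independently), and the case $\sigma=0$ then hinges on $\calP_{\bbD_\sfs}\widetilde P=\widetilde P$ via \eqref{A:inclusion}. Your sketch signals awareness of a cancellation being needed, but misidentifies what has to cancel; without the precise bookkeeping of $(\widetilde P,\widetilde M)_\Domain$ the lower bound does not close.
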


\begin{proof}
See section~\ref{SS:inf-sup}.
\end{proof}

The combination of the two above lemmas implies the main result of this section. 

  \begin{theorem}[Well-posedness of the full discretization]
\label{T:well-posedness-discretization}
For all possible loads $(L_u, L_p, L_0) \in \bbS^0_\sft (\bbU^*_\sfs ) \times \bbS^0_\sft (\bbP_\sfs ^*) \times \bbP_\sfs ^*$, the equations \eqref{E:BiotProblem-discretization} have a unique solution $Y_1  = (U, P_\Tot, P, M, M_0) \in \bbY_{1, \sfs\sft}$, which satisfies the two-sided stability bound
\begin{equation}
\label{E:well-posedness-discretization}
\begin{split}
\Norm{Y_1}_{1,\sfs\sft}^2  
\eqsim 
\int_{0}^T \left(
 \Norm{L_u}^2_{\bbU^*_\sfs } +
 \Norm{L_p}^2_{\bbP^*_\sfs } \right)\dt +
 \Norm{L_0}^2_{\bbP^*_\sfs }.
\end{split}
\end{equation}
Moreover, we have the norm equivalence
\begin{equation}
\label{E:well-posedness-discretization-norm-equivalence}
\Norm{Y_1}_{1,\sfs\sft}^2  \eqsim \Norm{Y_1}_{1,\sfs\sft}^2   + \Norm{M}^2_{L^\infty(\bbP_\sfs ^*)}
+\int_0^T\left( \lambda \Norm{\calD_\sfs  U}^2_{\Domain}  
+ \gamma^{-1} \Norm{P}^2_{\Domain} \right)\dt.
\end{equation}
The hidden constants depend only on the final time
$T$ and on the constants in the assumptions \eqref{A:extended-norms}
and \eqref{A:Stokes-inf-sup} in
section~\ref{SS:abstract-discretization-space}. 
\end{theorem}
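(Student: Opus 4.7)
The plan is to apply the finite-dimensional Banach--Ne\v{c}as theorem (cf.\ \cite[Theorem~26.6]{Ern.Guermond:21b}) to the variational problem \eqref{E:linear-variational-problem-discrete}. Since $\bbY_{1,\sfs\sft}$ and $\bbY_{2,\sfs\sft}$ are finite-dimensional with equal dimension, boundedness (Lemma~\ref{L:boundedness}) together with inf-sup stability (Lemma~\ref{L:inf-sup}) is equivalent to well-posedness and automatically yields the two-sided bound $\Norm{Y_1}_{1,\sfs\sft}\eqsim\Norm{L}_{\bbY_{2,\sfs\sft}^*}$ for the unique solution of \eqref{E:linear-variational-problem-discrete}. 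The system \eqref{E:BiotProblem-discretization} is the special instance of \eqref{E:linear-variational-problem-discrete} in which the load $L\in\bbY_{2,\sfs\sft}^*$ pairs only with the $V$-, $N$- and $N_0$-components, namely
\[
\langle L,(V,Q_\Tot,Q,N,N_0)\rangle_{\bbY_{2,\sfs\sft}} = \int_0^T\bigl(\langle L_u,V\rangle_{\bbU_\sfs}+\langle L_p,N\rangle_{\bbP_\sfs}\bigr)\dt + \langle L_0,N_0\rangle_{\bbP_\sfs}.
\]

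Next I would compute $\Norm{L}_{\bbY_{2,\sfs\sft}^*}$ explicitly. Cauchy--Schwarz against the test norm \eqref{E:test-norm} and the dual energy norms \eqref{E:norms-dual-discrete} yields an upper bound, while taking $Q_\Tot=Q=0$ and choosing $V$, $N$, $N_0$ to be the Riesz representatives of $L_u$, $L_p$, $L_0$ in the respective energy inner products gives a matching lower bound. This produces
\[
\Norm{L}_{\bbY_{2,\sfs\sft}^*}^2 \eqsim \int_0^T\bigl(\Norm{L_u}_{\bbU_\sfs^*}^2 + \Norm{L_p}_{\bbP_\sfs^*}^2\bigr)\dt + \Norm{L_0}_{\bbP_\sfs^*}^2,
\]
which, combined with the two-sided bound above, immediately gives \eqref{E:well-posedness-discretization}.

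The norm equivalence \eqref{E:well-posedness-discretization-norm-equivalence} is essentially already packaged into Lemma~\ref{L:inf-sup}: its right-hand side coincides with the square root of the expression on the right of \eqref{E:well-posedness-discretization-norm-equivalence} up to the factor $(1+T)^{-1/2}$, and the boundedness supplied by Lemma~\ref{L:boundedness} bounds $\sup_{Y_2}b_{\sfs\sft}(Y_1,Y_2)/\Norm{Y_2}_2$ by a multiple of $\Norm{Y_1}_{1,\sfs\sft}$; the reverse direction is trivial. Because the two lemmas do all the heavy lifting, the only remaining tasks at this level are the elementary identification of $\Norm{L}_{\bbY_{2,\sfs\sft}^*}$ with the data norm in \eqref{E:well-posedness-discretization} and tracking the $T$-dependence of the hidden constants (which enters exclusively through the $(1+T)^{-1/2}$ factor in Lemma~\ref{L:inf-sup}). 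The genuinely substantial step, i.e. the inf-sup bound itself, is deferred to section~\ref{SS:inf-sup}, so the main obstacle to completing the present theorem is really outsourced to that forthcoming proof.
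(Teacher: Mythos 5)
Your proposal is correct and follows essentially the same route as the paper: invoke the finite-dimensional Banach--Ne\v{c}as theorem together with Lemmas~\ref{L:boundedness} and \ref{L:inf-sup}, identify the dual norm of the load $L=(L_u,0,0,L_p,L_0)$ with the data norm using the structure of $\Norm{\cdot}_2$, and obtain \eqref{E:well-posedness-discretization-norm-equivalence} by chaining the inf-sup lower bound with the boundedness upper bound. The only cosmetic difference is that you spell out the computation of $\Norm{L}_{\bbY_{2,\sfs\sft}^*}$ via Riesz representatives, where the paper merely points to the definition of $\Norm{\cdot}_2$.
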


\begin{proof}
The equations \eqref{E:BiotProblem-discretization} are equivalent to
the linear variational problem
\eqref{E:linear-variational-problem-discrete} with load $L = (L_u, 0,
0, L_p, L_0) \in \bbY_{2,\sfs\sft}^*$. Then, the simplified version of
the Banach-Ne\u{c}as theorem for finite-dimensional linear variational
problems \cite[Theorem~26.6]{Ern.Guermond:21b} implies existence and
uniqueness of the solution, in combination with
Lemmas~\ref{L:boundedness} and \ref{L:inf-sup}. The identity
$b_{\sfs\sft}(Y_1, \cdot) = L$ in $\bbY_{2,\sfs\sft}^*$ further
implies 
\begin{equation*}
\begin{split}
\| Y_1 \|_{1,\sfs\sft}^2 +
\Norm{ M}^2_{L^\infty(\bbP_\sfs ^*)}
+ \int_{0}^T \Big( \lambda&
\Norm{\calD_\sfs   U}^2_{\Domain} + \gamma^{-1} \Norm{ P}^2_{\Domain}\Big) \dt \\
&\lesssim
\Norm{(L_u, 0, 0, L_p, L_0)}_{2,*}^2
\lesssim \Norm{Y_1}_{1,\sfs\sft}^2
\end{split}
\end{equation*}
according to estimates in Lemmas~\ref{L:boundedness} and
\ref{L:inf-sup}. The hidden constants depend only on the ones
therein. Here $\Norm{\cdot}_{2,*}$ denotes the dual of the test
norm. This readily implies the second claimed equivalence. The first one follows
by recalling the definition of the norm $\Norm{\cdot}_2$ in \eqref{E:test-norm}. 
\end{proof}

\begin{remark}[Jumps of the discrete fluid content]
Recall that the component $M$ of the solution $Y_1$ in
Theorem~\ref{T:well-posedness-discretization} represents the
discretization of the total fluid content $m$ in
Theorem~\ref{T:well-posedness-weak-formulation}. While the latter one
is guaranteed to be continuous in time, the former one is not, (indeed
it is piecewise constant). Still, a careful inspection at the proofs
of Lemmas~\ref{L:discrete-integration-by-parts} and \ref{L:inf-sup}
reveals that the left-hand side in the stability bound
\eqref{E:well-posedness-discretization} controls the jumps of $M$ in
time, namely 
\begin{equation*}
\sum_{j=1}^J\Norm{M_j- M_{j-1}}_{\bbP_\sfs ^*}^2 \lesssim \| Y_1 \|_{1,\sfs\sft}^2. 
\end{equation*}
This can be viewed as a discrete counterpart of the continuity of $m$.
\end{remark}

\subsection{Inf-sup stability}
\label{SS:inf-sup}
This section is devoted to the proof of Lemma~\ref{L:inf-sup}. The
argument is similar to the one in
\cite[section~3.1]{Kreuzer.Zanotti:23+}, which establishes the inf-sup
stability of the form $b$ in
\eqref{E:BiotProblem-abstract-form}. Nevertheless, some subtleties
exist with regard to the discretization and we therefore include a
detailed proof so as to keep the discussion as much complete and
self-contained as possible. 

Let \(\widetilde Y_1=(\widetilde U,\widetilde P_\Tot,\widetilde
P,\widetilde M,\widetilde M_0)\in\bbY_{1,\sfs\sft}\) be given and set  
\begin{equation*}
\widetilde{H}_{\bbD_\sfs} := \lambda \calD_\sfs  \widetilde U - \widetilde P_\Tot - \alpha
\calP_{\bbD_\sfs }\widetilde P
\qquad \text{and} \qquad
\widetilde{H}_{\bbP_\sfs} := \alpha \calP_{\bbP_\sfs }\calD_\sfs 
\widetilde U + \sigma \widetilde P - \widetilde M
\end{equation*}
for shortness. Since the projections $\calP_{\bbD_\sfs }$ and
$\calP_{\bbP_\sfs }$ map onto $\bbD_\sfs$ and $\bbP_\sfs $,
respectively, we have $\widetilde H_{\bbD_\sfs} \in
\bbS^0_\sft(\bbD_\sfs)$ as well as $\widetilde H_{\bbP_\sfs} \in
\bbS^0_\sft(\bbP_\sfs) $. We consider the test function  
\begin{equation*}
\begin{split}
Y_{2,j} := \Big (\; (\widetilde U + \calE^{-1}_\sfs  \calD_\sfs ^*\widetilde P_\Tot)\chi_{j}, \;
\dfrac{4\max \{1, C\}}{\mu+\lambda}\widetilde H_{\bbD_\sfs}\chi_{j},
\dfrac{4\gamma}{\min\{1,c\}} \widetilde H_{\bbP_\sfs}\chi_{j},&\\
\; \calL_\sfs ^{-1} (2 \widetilde M + \mathrm{d}_\sft  (\widetilde M,
\widetilde M_0) + \calL_\sfs  \widetilde P)\chi_{j},& 
\; \calL_\sfs ^{-1} \widetilde M_0 \;\Big) 
\end{split}
\end{equation*}  
with $j = 1,\dots, J$, where $\chi_j: [0, T] \to \R$ denotes the
indicator function of the interval $[0, t_j]$. Here the constants $c$
and $C$ are as in the assumption \eqref{A:Stokes-inf-sup} in
section~\ref{SS:abstract-discretization-space}. We refer to
\cite[Remark~3.9]{Kreuzer.Zanotti:23+} for a motivation of the test
function.   
  
First of all, notice that we indeed have that $Y_{2,j} \in
\bbY_{2,\sfs\sft}$, i.e. it is an admissible test function. Indeed,
recall the definition of the spaces $\bbY_{1, \sfs\sft}$ and
$\bbY_{2,\sfs\sft}$ in \eqref{E:trial-space-discrete} and
\eqref{E:test-space-discrete}. The operator $\calE_\sfs ^{-1}
\calD_\sfs ^*$ maps $\bbD_\sfs$ into $\bbU_\sfs $ and $\calL_\sfs $ is
an isometry between $\bbP_\sfs $ and $\bbP_\sfs ^*$.  
Moreover, the indicator function $\chi_j$ is piecewise constant on the
partition \eqref{E:mesh-time} of $[0, T]$. Hence, the multiplication
by $\chi_j$ preserves the inclusion in $\bbY_{2,\sfs\sft}$. 

The proof consists of two main steps. First, we establish the lower bound
\begin{equation}
\label{E:inf-sup-proof-lower-bound}
\begin{split}
b_{\sfs\sft}(\widetilde Y_1, Y_{2,j} + Y_{2,J}) \gtrsim 
&\| \widetilde Y_1 \|_{1,\sfs\sft}^2 + \Norm{\widetilde
  M}^2_{L^\infty(\bbP^*_\sfs)} + \int_{0}^T \left( \lambda
  \Norm{\calD_\sfs  \widetilde U}^2_{\Domain} + \gamma^{-1}
  \Norm{\widetilde P}^2_{\Domain}\right) \dt  
\end{split}
\end{equation}
for a suitable index $1\leq j\leq J$. Then, we estimate the norm of the test function
\begin{equation}
\label{E:inf-sup-proof-upper-bound}
\Norm{Y_{2,j}}^2_2 \lesssim \| \widetilde Y_1 \|_{1,\sfs\sft}^2 + (1+T)\Norm{\widetilde M}^2_{L^\infty(\bbP^*_\sfs)}
\end{equation}
for some hidden constant independent of $j$. The combination of these
inequalities readily implies the inf-sup stability claimed in
Lemma~\ref{L:inf-sup}. 

We start with the proof of \eqref{E:inf-sup-proof-lower-bound}. Using
the test function $Y_{2,j}$ in the definition
\eqref{E:BiotProblem-discrete-form} of the form $b_{\sfs\sft}$ yields 
\begin{equation}
\label{E:inf-sup-semiD-action}
\begin{alignedat}{4}
b_{\sfs\sft}(\widetilde Y_1, Y_{2,j}) &=
\int_0^{t_j} \langle \calE_\sfs  \widetilde U + \calD^*_\sfs
\widetilde P_\Tot, \widetilde U + \calE^{-1}_\sfs  \calD^*_\sfs
\widetilde P_\Tot \rangle_{\bbU_\sfs } \dt &  \qquad (=: \mathfrak
I_1)
\\ 
&+ 2\int_{0}^{t_j} \langle \mathrm{d}_\sft  (\widetilde M,
\widetilde{M}_0) + \calL_\sfs  \widetilde P, \calL_\sfs ^{-1}
\widetilde M \rangle_{\bbP_\sfs }\dt & \qquad (=: \mathfrak I_2)
\\ 
&+ \Norm{\widetilde M_0}_{\bbP_\sfs ^*}^2
+ \int_{0}^{t_j} \Norm{\mathrm{d}_\sft  (\widetilde M,
  \widetilde{M}_0) + \calL_\sfs  \widetilde P}_{\bbP_\sfs ^*}^2\dt
\\
&+ \int_0^{t_j} \dfrac{4\max\{1,C\}}{\mu+\lambda} \Norm{\widetilde
  H_{\bbD_\sfs}}_{\Domain}^2 \dt
\\
&+ \int_0^{t_j} \dfrac{4\gamma}{\min\{1,c\}} \Norm{\widetilde H_{\bbP_\sfs}}^2_{\Domain} \dt.
\end{alignedat}
\end{equation}
Notice that the dual norms on the third line are obtained thanks to
the second part of \eqref{E:norms-dual-discrete}. We are led to
analyze the terms $\mathfrak{I}_1$ and $\mathfrak{I}_2$ on the
right-hand side.  

Regarding the first term, we have
\begin{align*}
  \mathfrak{I}_1=\int_0^{t_j} \left(\langle  \calE_\sfs  \widetilde U,\widetilde
  U\rangle_{\bbU_\sfs }+2\langle\calD^*_\sfs \widetilde P_\Tot, \widetilde U \rangle_{\bbU_\sfs }
  +\langle{\calD_\sfs ^* \widetilde P}, \calE_\sfs ^{-1}\calD_\sfs ^* \widetilde P_\Tot\rangle_{\bbU_\sfs }\right) \dt.
\end{align*}
We rewrite the first summand with the help of the first identity in \eqref{E:norms-discrete}. Analogously, we estimate the third summand from below by means of the first identity in \eqref{E:norms-dual-discrete} and the assumption \eqref{A:Stokes-inf-sup} in section~\ref{SS:abstract-discretization-space}
\begin{align*}
  \langle{\calE_\sfs \widetilde
  U,\widetilde U\rangle_{\bbU_\sfs }= \|\widetilde
  U\|_{\bbU_\sfs }^2\quad\text{and}\quad \langle{\calD_\sfs ^* \widetilde P_\Tot},
  \calE_\sfs ^{-1}\calD_\sfs ^* \widetilde P_\Tot}\rangle_{\bbU_\sfs } \ge
  \frac{c}{\mu}\|\widetilde P_\Tot\|_{\Domain}^2.
\end{align*}
We estimate the remaining term by observing that the assumption \eqref{A:Stokes-inf-sup} implies also $\mu \Norm{\calD_\sfs  \cdot}_{\Domain}^2 \leq C \Norm{\cdot}_{\bbU_\sfs }^2$ in $\bbU_\sfs $. This bound and a Young's inequality imply
\begin{equation*}
\begin{split}
\langle \calD^*_\sfs  \widetilde P_\Tot, \widetilde U  \rangle_{\bbU_\sfs }
&= 
-( \calD_\sfs  \widetilde U, \widetilde H_{\bbD_\sfs} )_{\Domain} 
+ 
\lambda \Norm{\calD_\sfs \widetilde U}^2_{\Domain} 
- 
\alpha (  \calD_\sfs  \widetilde U, \calP_{\bbD_\sfs } \widetilde P)_{\Domain}
\\
&\geq \dfrac{3\lambda}{4} \Norm{\calD_\sfs  \widetilde U}^2_{\Domain} 
- 
\frac{1}{4}\Norm{ \widetilde U}^2_{\bbU_\sfs } 
-
\dfrac{\max\{1,C\}}{\mu+\lambda} \Norm{\widetilde H_{\bbD_\sfs}}^2_{\Domain}
-
\alpha ( \calD_\sfs  \widetilde U, \widetilde P )_{\Domain}. 
\end{split}
\end{equation*}
Notice that we were able to omit the projection $\calP_{\bbD_\sfs }$
in the last term thanks to the inclusion $\calD_\sfs  \widetilde U \in
\bbS^0_\sft (\bbD_\sfs)$. Combining this bound with the identities
above reveals  
\begin{equation*}
  \begin{split}
    \mathfrak{J}_1&\ge \int_0^{t_j} \left( \frac12\|\widetilde
        U\|_{\bbU_\sfs }^2+\frac{c}{\mu}\|\widetilde P_\Tot\|_{\Domain}^2
      +\dfrac{3\lambda}{2} \Norm{\calD_\sfs  \widetilde U}^2_{\Domain}\right.
    \\
    &\qquad\qquad -\left.  \dfrac{2\max\{1,C\}}{\mu+\lambda} \Norm{\widetilde H_{\bbD_\sfs}}^2_{\Domain} -2\alpha
      ( \calD_\sfs  \widetilde U, \widetilde P )_{\Domain}\right)\dt.
  \end{split}
\end{equation*}

Regarding the other critical term in \eqref{E:inf-sup-semiD-action}, we have
\begin{equation*}
\mathfrak{I}_2 = 2 \int_{0}^{t_j} \Big( \langle \mathrm{d}_\sft  (\widetilde M, \widetilde M_0),
  \calL^{-1}_\sfs  \widetilde M \rangle_{\bbP_\sfs } + ( \widetilde M,
  \widetilde P )_{\Domain}  \Big)\dt.  
\end{equation*}
The use of the $L^2(\Domain)$-scalar product in the second summand is justified by the discussion after \eqref{E:Hilbert-triplet-discrete}. We estimate the first summand from below by invoking Lemma~\ref{L:discrete-integration-by-parts}
\begin{equation*}
    2\int_{0}^{t_j} \langle \mathrm{d}_\sft  (\widetilde M, \widetilde{M}_0), \calL^{-1}_\sfs 
    \widetilde M \rangle_{\bbP_\sfs } \dt
    \ge \|\widetilde M_j\|_{\bbP_\sfs ^*}^2- \|\widetilde M_0\|_{\bbP_\sfs ^*}^2.
\end{equation*}
To investigate the second summand, assume first $\sigma > 0$.
A Young's inequality reveals 
\begin{equation*}
\begin{split}
( \widetilde M, \widetilde P)_\Domain 
&= 
( \widetilde H_{\bbP_\sfs}, \widetilde P )_\Domain 
+ 
\alpha ( \calP_{\bbP_\sfs }\calD_\sfs 
\widetilde U, \widetilde P )_\Domain 
+ 
\sigma \Norm{\widetilde P}^2_\Domain 
\\
&\geq  \alpha ( \calD_\sfs  \widetilde U, \widetilde P )_\Domain 
+ 
\dfrac{\sigma}{2} \Norm{\widetilde P}^2_\Domain 
- 
\dfrac{1}{2\sigma} \Norm{\widetilde H_{\bbP_\sfs}}^2_\Domain. 
\end{split}
\end{equation*}
As before, we omit the projection $\calP_{\bbP_\sfs }$
thanks to the inclusion $\widetilde P \in \bbS^0_\sft (\bbP_\sfs )$.  
Alternatively, for general $\sigma \geq 0$, it holds that 
\begin{equation*}
\begin{split}
( \widetilde M, \widetilde P)_\Domain
&= 
( \widetilde H_{\bbP_\sfs}, \widetilde P )_\Domain
+ 
\alpha ( \calP_{\bbP_\sfs }\calD_\sfs  \widetilde U, \widetilde P )_\Domain 
+ 
\sigma \Norm{\widetilde P}^2_\Domain 
\\
&=-\dfrac{1}{\alpha} ( \widetilde H_{\bbP_\sfs}, \widetilde H_{\bbD_\sfs} )_\Domain  
+
\dfrac{\lambda}{\alpha} ( \widetilde H_{\bbP_\sfs},  \calD_\sfs  \widetilde U )_\Domain
-
\dfrac{1}{\alpha} ( \widetilde H_{\bbP_\sfs}
, \widetilde P_\Tot )_\Domain\\
& \quad+
( \widetilde H_{\bbP_\sfs}, \widetilde P - \calP_{\bbD_\sfs } \widetilde P )_\Domain
+ 
\alpha ( \calD_\sfs  \widetilde U, \widetilde P )_\Domain 
+ 
\sigma \Norm{\widetilde P}^2_\Domain.  
\end{split}
\end{equation*}
Bounding the term  $( \widetilde H_{\bbP_\sfs}, \widetilde P - \calP_{\bbD_\sfs } \widetilde P
)_\Domain$ conveniently is subtle. Whenever $\bbP_\sfs 
\subseteq \bbD_\sfs$, we have $\calP_{\bbD_\sfs } \widetilde P = \widetilde P$, hence
$( \widetilde H_{\bbP_\sfs}, \widetilde P - \calP_{\bbD_\sfs } \widetilde P 
)_\Domain = 0$. When
the above inclusion fails, we have $\sigma > 0$ due to the assumption \eqref{A:inclusion} in section~\ref{SS:abstract-discretization-space}. Then, we apply Young's inequality to obtain $( \widetilde H_{\bbP_\sfs}, \widetilde P - \calP_{\bbD_\sfs } \widetilde P
)_\Domain \leq  \Norm{\widetilde
	H_{\bbP_\sfs}}^2_\Domain/(2\sigma) + \sigma\Norm{\widetilde P}^2_\Domain/2$.   
Combining this observation with other applications of Young's inequality, the previous lower bound and the definition \eqref{E:gamma} of $\gamma$,  we arrive at
\begin{equation*}
\begin{split}
( \widetilde M, \widetilde P)_\Domain 
\geq  
&-\dfrac{\max\{1,C\}}{2(\mu+\lambda)} \Norm{\widetilde H_{\bbD_\sfs}}^2_\Domain 
-
\dfrac{c}{4 \mu} \Norm{\widetilde P_\Tot}^2_\Domain 
-
\dfrac{\lambda}{2} \Norm{\calD_\sfs  \widetilde U}^2_\Domain
\\
&+ \dfrac{\sigma}{2} \Norm{\widetilde P}^2_\Domain
-\dfrac{3\gamma}{2 \min \{1,c\}}
\Norm{\widetilde H_{\bbP_\sfs}}_\Domain^2 
+ 
\alpha ( \calD_\sfs  \widetilde U, \widetilde
  P )_\Domain. 
\end{split}
\end{equation*}
By combining the last two bounds with the above identity for
$\mathfrak{I}_2$, we obtain
\begin{equation*}
\begin{split}
\mathfrak{I}_2 
&\geq \|\widetilde M_j\|_{\bbP_\sfs ^*}^2- \|\widetilde M_0\|_{\bbP_\sfs ^*}^2
\\ 
&\quad+ \int_0^{t_j} \left(-\dfrac{\max\{1,C\}}{\mu+\lambda} \Norm{\widetilde{H}_{\bbD_\sfs}}^2_\Domain 
-
\dfrac{c }{2 \mu} \Norm{\widetilde P_\Tot}^2_\Domain 
-
\dfrac{\lambda}{2} \Norm{\calD_\sfs  \widetilde U}^2_\Domain 
+ 
\sigma
\Norm{\widetilde P}^2_\Domain\right.
\\ 
&\qquad \qquad \; - \left.\dfrac{3\gamma}{\min\{1,c\}} \Norm{\widetilde H_{\bbP_\sfs}}^2_\Domain + 2\alpha ( \calD_\sfs  \widetilde U, \widetilde P )_\Domain\right)\dt.
\end{split} 
\end{equation*}

After this preparation, we are in position to choose a specific test
function in order to establish \eqref{E:inf-sup-proof-lower-bound}. We
let $\overline j\in\{1,\ldots,J\}$ be such that $\Norm{\widetilde
  M_{\overline j}}_{\bbP_\sfs^*} = \Norm{\widetilde
  M}_{L^\infty(\bbP_\sfs^*)}$. Then, we insert the previous lower
bounds of $\mathfrak{I}_1$ and $\mathfrak{I}_2$ into
\eqref{E:inf-sup-semiD-action}, resulting in
\begin{equation*}
\label{E:inf-sup-test-function-action}
\begin{split}
b_{\sfs\sft}&(\widetilde Y_1, Y_{2, \overline{j}} + Y_{2, J}) \geq\\ 
&\int_0^T \left( \frac12\Norm{\widetilde U}^2_{\bbU_\sfs } +\dfrac{c}{2\mu} \Norm{\widetilde P_\Tot}^2_\Domain +  \Norm{\mathrm{d}_\sft  (\widetilde M, \widetilde M_0) + \calL_\sfs  \widetilde
  P}^2_{\bbP^*_\sfs }  \right)\dt
\\
+&\int_0^T \left( \dfrac{1}{\mu+\lambda} \Norm{\widetilde{H}_{\bbD_\sfs}}^2_\Domain + \gamma \Norm{\widetilde H_{\bbP_\sfs}}^2_\Domain +
\dfrac{\lambda}{2} \Norm{\calD_\sfs  \widetilde U}^2_\Domain + \sigma
\Norm{\widetilde P}^2_\Domain  \right)\dt + \Norm{\widetilde M}^2_{L^{\infty}(\bbP^*_\sfs )} .
\end{split}
\end{equation*}
In combination with the definition \eqref{E:trial-norm-discrete} of the norm $\Norm{\cdot}_{1,\sfs\sft}$ and the assumption \eqref{A:extended-norms} in section~\ref{SS:abstract-discretization-space}, this almost establishes \eqref{E:inf-sup-proof-lower-bound}. Indeed, we have
\begin{equation*}
\begin{split}
b_{\sfs\sft}(\widetilde Y_1, Y_{2,\overline j} + Y_{2,J}) \gtrsim 
&\| \widetilde Y_1 \|_{1,\sfs\sft}^2 + \Norm{\widetilde M}^2_{L^\infty(\bbP^*_\sfs)} + \int_{0}^T \left( \lambda \Norm{\calD_\sfs  \widetilde U}^2_{\Domain} + \sigma \Norm{\widetilde P}^2_{\Domain}\right) \dt.
\end{split}
\end{equation*}
The proof that we can indeed replace $\sigma$ by $\gamma^{-1}$ in the last summand follows verbatim the argument in the proof of \cite[Proposition~4.1]{Kreuzer.Zanotti:23+}.

The last step of the proof consists in establishing
\eqref{E:inf-sup-proof-upper-bound}. According to the definition \eqref{E:test-norm}
of the test norm $\Norm{\cdot}_2$, we have for all $j = 1, \dots, J$, that
\begin{equation*}
\begin{alignedat}{4}
\Norm{Y_{2,j}}_2^2 
&= 
\int_{0}^{t_j}  \Big (\Norm{\widetilde U + \calE^{-1}_\sfs  \calD^*_\sfs  \widetilde
  P_\Tot}_{\bbU}^2  + \Norm{\calL_\sfs ^{-1}(2\widetilde M + \mathrm{d}_\sft  (\widetilde M, \widetilde M_0)
  + \calL_\sfs  \widetilde P)}^2_\bbP \Big )\dt
\\
&+\int_0^{t_j} \Big(\dfrac{16 \max\{1,C^2\}}{\mu+\lambda} \Norm{\widetilde H_{\bbD_\sfs}}^2_\Domain 
+ 
\dfrac{16 \gamma}{\min\{1,c^2\}} \Norm{\widetilde H_{\bbP_\sfs}}^2_\Domain \Big )\dt+ \Norm{\calL_\sfs ^{-1} \widetilde M_0}^2_\bbP.
\end{alignedat}
\end{equation*}

We exploit the identities \eqref{E:norms-discrete} and \eqref{E:norms-dual-discrete} and the norm equivalences in the assumptions \eqref{A:extended-norms} and \eqref{A:Stokes-inf-sup} from section~\ref{SS:abstract-discretization-space}. We extend also the integrals from the interval $[0, t_j]$ to $[0, T]$. Hence, we obtain
\begin{equation*}
\begin{alignedat}{4}
\Norm{Y_{2,j}}_2^2 
&\lesssim 
\int_{0}^T  \Big (\Norm{\widetilde U}^2_{\bbU} +
\dfrac{1}{\mu}\Norm{\widetilde P_\Tot}_\Domain^2  + \Norm{\widetilde
  M}_{\bbP^*_\sfs }^2 + \Norm{\mathrm{d}_\sft (\widetilde M, \widetilde M_0) + \calL_\sfs 
  \widetilde P}^2_{\bbP_\sfs ^*} \Big )\dt
\\ 
&\quad+\int_0^T \Big(\dfrac{1}{\mu+\lambda} \Norm{\widetilde H_{\bbD_\sfs}}^2_\Domain 
+ 
\gamma \Norm{\widetilde H_{\bbP_\sfs}}^2_\Domain \Big )\dt 
+ \Norm{\widetilde M_0}^2_{\bbP^*_\sfs }.
\end{alignedat}
\end{equation*}
Finally, we recall the definition \eqref{E:trial-norm-discrete} of the
norm $\Norm{\cdot}_{1,\sfs\sft}$ and exploit the upper bound $\int_0^T
\Norm{\cdot}_{\bbP_\sfs ^*}^2\dt \leq T
\Norm{\cdot}^2_{L^\infty(\bbP_\sfs ^*)}$. This establishes
\eqref{E:inf-sup-proof-upper-bound} for some hidden constant
independent of $j$ and concludes the proof. 

\begin{remark}[Time-dependent space discretization]
\label{R:time-dependent-space-discretization}
In our framework the space discretization does not change in time and
this is important for the proof of the inf-sup stability in this
section. Indeed, a change in the space discretisation would imply the
change of the operator \(\calL_\sfs \) in time and this would have an
effect on our use of the integration by parts formula from
Lemma~\ref{L:discrete-integration-by-parts}. For the same reason, we
argued in \cite[Remark~2.1]{Kreuzer.Zanotti:23+} that the parameter
$\kappa$ in the Biot's equations \eqref{E:BiotProblem-equations} could
be allowed to vary in space but not in time.  
\end{remark} 

\section{Discretisation with Lagrange elements in space}
\label{S:concrete-discretization}
In this section we propose an exemplary concrete space discretization, based on
$H^1$-confor\-ming Lagrange finite elements for all variables. Hence,
we first verify the assumptions
\eqref{A:extended-norms}-\eqref{A:inclusion} from
section~\ref{SS:abstract-discretization-space}, in order to infer the
well-posedness. Then, we discuss the a priori error analysis. 

Our notation and assumptions for the finite elements are as
follows. We denote by $\Mesh$ a face-to-face simplicial mesh of
$\Domain$. The shape constant of $\Mesh$ is given by   
\begin{equation}
\label{E:shape-constant}
\max_{\sfT \in \Mesh} \; \dfrac{\sfh_{\sfT}}{\mathsf r_{\sfT}}
\end{equation}
where $\sfh_\sfT$ is the diameter of a $\Dim$-simplex $\sfT \in \Mesh$
and $\mathsf r_\sfT$ is the diameter of the largest ball inscribed in
$\sfT$. 

We
denote by $\Faces$ the set of all faces of $\Mesh$ and by \(\Faces^i\) the interior faces. We
assume that the mesh is compatible with the boundary
conditions~\eqref{E:BiotProblem-boundary-conditions}. This means that
each face $\sfF \in \Faces \setminus \Faces^i$ (i.e. each boundary
face) satisfies 
\begin{align*}
\text{either}~\sfF\subseteq \Gamma_{u,N}~\text{or}~ \sfF\subseteq
\Gamma_{u,E}
\qquad\text{and}\qquad
\text{either}~\sfF\subseteq \Gamma_{p,N}~\text{or}~\sfF\subseteq \Gamma_{p,E}.
\end{align*}
Hence, we have two partitions of the boundary faces $\Faces_{{u,E}}
\cup \Faces_{{u,N}}$ and $\Faces_{p,E} \cup \Faces_{p,N}$, where each
set $\Faces_*$ is defined as  
\begin{equation}
\label{E:boundary-portions}
\Faces_* := \{ \sfF \in \Faces \mid \sfF \subseteq \Gamma_* \}.
\end{equation}

We let the meshsize $\sfh$ and the normal $\Normal$ be the piecewise
constant functions on $\Mesh$ and on the skeleton of $\Mesh$ (i.e. the
union of all faces) defined as  
\begin{equation*}
\label{meshsize+normal}
\sfh_{|\sfT} := \sfh_{\sfT} 
\qquad \text{and} \qquad
\Normal_{|\sfF} := \Normal_\sfF 
\end{equation*}
for all $\sfT \in \Mesh$ and $\sfF \in \Faces$, respectively. Here
$\Normal_\sfF$ is a fixed unit normal vector of 
$\sfF$, pointing outside $\Domain$ if $\sfF$ is a boundary face.

The space $\Poly{\sfk}(S)$, $\sfk \geq
0$, consist of all polynomials of total degree $\leq \sfk$ on an
$n$-simplex $S \subseteq \R^\Dim$ with $1\leq n \leq \Dim$. The
corresponding space of (possibly discontinuous) piecewise polynomials
on the mesh $\Mesh$ is denoted by $\Poly{\sfk}(\Mesh)$.

\subsection{Concrete space discretisation}
\label{SS:concrete-discretization-space}
A concrete realization of the abstract space discretization from
section~\ref{SS:abstract-discretization-space} requires a specific
choice of the spaces $\bbU_\sfs $ and $\bbP_\sfs $ and of the
operators $\calE_\sfs $ and $\calL_\sfs $ in
\eqref{E:abstract-spaces-discrete-displacement-pressure} and
\eqref{E:abstract-operators-discrete-elliptic}, respectively. We have
to prescribe also the operator $\calD_\sfs $ in
\eqref{E:abstract-operators-discrete-divergence} and to characterize
its range $\bbD_\sfs$ in
\eqref{E:abstract-spaces-discrete-total-pressure}. Finally, we need to
verify the assumptions \eqref{A:extended-norms}-\eqref{A:inclusion}. 

For the discretization of the displacement, we consider
$H^1$-conforming Lagrange finite elements of degree $\sfk + 1$ with
$\sfk \geq 1$, i.e., we set 
\begin{equation}
\label{E:concrete-spaces-displacement}
\bbU_\sfs :=
\Poly{\sfk+1}(\Mesh)^d \cap \bbU.
\end{equation}
The intersection with the space $\bbU$ from
\eqref{E:abstract-spaces-displacement} enforces the global continuity
(hence the  $H^1$-conformity) as well as the boundary conditions. As
usual in conforming finite elements, we discretize the operator
$\calE$ in \eqref{E:abstract-operators-concrete} by $\calE_\sfs : \bbU_\sfs  \to \bbU_\sfs ^*$ defined
as 
\begin{equation}
\label{E:concrete-operator-elasticity}
\langle\calE_\sfs \widetilde U
, V\rangle_{\bbU_\sfs }:=  \langle\calE\widetilde U
, V\rangle_{\bbU}
\end{equation}
for all $\widetilde U, V \in \bbU_\sfs $. 

Recall that the operator $\calD_\sfs $ should be a discretization of
the divergence and that the pair $\bbU_\sfs /\bbD_\sfs$ with
$\bbD_\sfs = \calD_\sfs (\bbU_\sfs )$ should enjoy a discrete Stokes
inf-sup condition in order to satisfy assumption
\eqref{A:Stokes-inf-sup}. Given the above definition of $\bbU_\sfs $,
one option for $\bbD_\sfs$ is suggested by the so-called Hood-Taylor pair,
cf. \cite[section~54.4]{Ern.Guermond:21b}. Hence, we consider
$H^1$-conforming Lagrange finite elements of degree $\sfk$ 
\begin{align}
\label{E:concrete-spaces-pressure-total}
\bbD_\sfs:=
\Poly{\sfk}(\Mesh) \cap H^1(\Domain) \cap \bbD
\end{align}
and we define $\calD_\sfs : \bbU_\sfs  \to \bbD_\sfs$ by
\begin{equation}
\label{E:concrete-operator-divergence}
( \calD_\sfs  \widetilde U, Q_\Tot)_\Domain = ( \calD \widetilde U, Q_\Tot )_\Domain
\end{equation}
for all $\widetilde U \in \bbU_\sfs $ and $Q_\Tot \in
\bbD_\sfs$. According to \eqref{E:abstract-spaces-pressure-total}, the
intersection with $\bbD$ in \eqref{E:concrete-spaces-pressure-total}
simply enforces the vanishing mean value when $\Gamma_{u,E} = \partial
\Domain$. Note also that $\calD_\sfs  \widetilde U$ is nothing else
than the $\bbH$-orthogonal projection of $\calD \widetilde U$ onto
$\bbD_\sfs$.  

Finally, the assumption \eqref{A:inclusion} suggests that also the
space $\bbP_\sfs $, for the discretization of the pressure and of the
total fluid content, should consist of $H^1$-conforming Lagrange
finite elements of degree $\sfk$. Therefore, we set 
\begin{equation}
\label{E:concrete-spaces-pressure}
\bbP_\sfs :=
\Poly{\sfk}(\Mesh) \cap \bbP.
\end{equation}
The intersection with the space $\bbP$ from
\eqref{E:abstract-spaces-pressure} enforces global continuity, the
boundary conditions and, possibly, the vanishing mean value. In
this case, we define $\calL_\sfs : \bbP_\sfs  \to \bbP_\sfs ^*$ in terms of  $\calL$ in \eqref{E:abstract-operators-concrete} by
\begin{equation}
\label{E:concrete-operator-laplacian}
\langle\calL_\sfs \widetilde P
, Q\rangle_{\bbP_\sfs }:=  \langle\calL\widetilde P
, Q\rangle_{\bbP}
\end{equation}
for all $\widetilde P, Q \in \bbP_\sfs $.

\begin{remark}[Hidden constants]
\label{R:hidden-constants}
In order to simplify the statement of the next results, it is
implicitly understood hereafter that all hidden constants in our
estimates potentially depend on the final time $T$, the shape constant
\eqref{E:shape-constant} of $\Mesh$ and the polynomial degree
$\sfk$. In particular, the latter is arbitrary but fixed in our
setting. The possible dependence on other relevant quantities is
addressed case by case.  
\end{remark}

Having introduced all the spaces and the operators required for the
space discretization in
section~\ref{SS:abstract-discretization-space}, we can verify the
validity of the assumptions
\eqref{A:extended-norms}-\eqref{A:inclusion}.

\begin{proposition}[Verification of the assumptions]
\label{P:verification-assumptions}
Let the spaces $\bbU_\sfs $, $\bbD_\sfs$ and $\bbP_\sfs $ and the
operators $\calE_\sfs $, $\calD_\sfs $ and $\calL_\sfs $ be defined by
\eqref{E:concrete-spaces-displacement}-\eqref{E:concrete-operator-laplacian}. 
\begin{enumerate}
	\item \label{P:va-i} The assumption~\eqref{A:extended-norms} holds true and
          the equivalences therein are actually identities. 
	\item  \label{P:va-ii}
          The assumption \eqref{A:Stokes-inf-sup} holds true and
          the constants therein depend only on the quantities
          mentioned in Remark~\ref{R:hidden-constants}, provided that
          each $\Dim$-simplex in $\Mesh$ has at least one vertex in
          the interior of $\Domain$. 
	\item \label{P:va-iii} The assumption \eqref{A:L2-projection}
          holds true and
          the constants therein depend only on the quantities
          mentioned in Remark~\ref{R:hidden-constants}, provided that the
          grading of  $\Mesh$, defined as in \cite{Diening.Storn.Tscherpel:23}, is strictly less than 
          \(  (\sqrt{2 \sfk+d}+\sqrt{\sfk})/(\sqrt{2 \sfk+d}-\sqrt{\sfk})\).
	%
	\item  \label{P:va-iv} The assumption \eqref{A:inclusion} holds true.
\end{enumerate}
\end{proposition}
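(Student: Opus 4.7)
\textbf{Part (\ref{P:va-i}).} By conformity of the Lagrange elements, $\bbU_\sfs \subseteq \bbU$ and $\bbP_\sfs \subseteq \bbP$, so the sums satisfy $\bbU+\bbU_\sfs=\bbU$ and $\bbP+\bbP_\sfs=\bbP$, and the energy norms $\Norm{\cdot}_\bbU$ and $\Norm{\cdot}_\bbP$ in \eqref{E:norms} are already defined on the sums. Since the operators $\calE_\sfs$ and $\calL_\sfs$ are introduced in \eqref{E:concrete-operator-elasticity} and \eqref{E:concrete-operator-laplacian} as Galerkin restrictions, the comparison \eqref{E:norms-discrete} reduces to $\Norm{V}_{\bbU_\sfs}^2=\langle\calE_\sfs V,V\rangle_{\bbU_\sfs}=\langle\calE V,V\rangle_{\bbU}=\Norm{V}_\bbU^2$ for all $V\in\bbU_\sfs$, and analogously on $\bbP_\sfs$. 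This gives the equivalences as identities with constant $1$.

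\textbf{Part (\ref{P:va-ii}).} The pair $\bbU_\sfs/\bbD_\sfs$ from \eqref{E:concrete-spaces-displacement}--\eqref{E:concrete-spaces-pressure-total} is precisely the Hood--Taylor pair of degrees $\sfk+1,\sfk$, and assumption \eqref{A:Stokes-inf-sup} is exactly its inf-sup condition. The plan is to cite the classical stability result, e.g. in the form of \cite[Thm.~54.3]{Ern.Guermond:21b}, which is valid under the stated assumption that every $\Dim$-simplex of $\Mesh$ has at least one interior vertex; the constants depend only on shape-regularity and on the polynomial degree. The upper bound in \eqref{A:Stokes-inf-sup} follows at once from $\Norm{\calD V}_\Domain \lesssim \Norm{V}_\bbU$, i.e.\ the continuity of the divergence, together with Part~(\ref{P:va-i}).

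\textbf{Part (\ref{P:va-iii}).} For the lower bound in \eqref{A:L2-projection}, Part~(\ref{P:va-i}) and the idempotency of $\calP_{\bbP_\sfs}$ give
\begin{equation*}
\Norm{\calP_{\bbP_\sfs}^* L}_{\bbP^*}
\;\geq\;
\sup_{Q\in\bbP_\sfs}\frac{\langle L,\calP_{\bbP_\sfs}Q\rangle_{\bbP_\sfs}}{\Norm{Q}_\bbP}
\;=\;
\sup_{Q\in\bbP_\sfs}\frac{\langle L,Q\rangle_{\bbP_\sfs}}{\Norm{Q}_{\bbP_\sfs}}
\;=\;
\Norm{L}_{\bbP_\sfs^*},
\end{equation*}
so the lower constant is $c=1$. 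For the upper bound, inserting $\calP_{\bbP_\sfs}q$ into the definition of the dual norm yields
\begin{equation*}
\Norm{\calP_{\bbP_\sfs}^* L}_{\bbP^*}
\;\leq\;
\Norm{L}_{\bbP_\sfs^*}\sup_{q\in\bbP}\frac{\Norm{\calP_{\bbP_\sfs}q}_{\bbP_\sfs}}{\Norm{q}_\bbP},
\end{equation*}
so the claim reduces to the $\bbP$-stability of the $L^2$-projection onto $\bbP_\sfs$, a scalar $H^1$-conforming Lagrange space. The plan is to invoke the main result of \cite{Diening.Storn.Tscherpel:23}, whose sharp mesh-grading threshold is exactly the quantity $(\sqrt{2\sfk+d}+\sqrt{\sfk})/(\sqrt{2\sfk+d}-\sqrt{\sfk})$ appearing in the hypothesis; the resulting constant depends only on the quantities listed in Remark~\ref{R:hidden-constants}. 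This is the step I expect to carry the real content, since everything else is either conformity bookkeeping or a citation of a standard pairing, whereas $H^1$-stability of the $L^2$-projection is a nontrivial, recent result.

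\textbf{Part (\ref{P:va-iv}).} This is a direct case analysis from the definitions \eqref{E:abstract-spaces-pressure}, \eqref{E:abstract-spaces-pressure-total}, \eqref{E:concrete-spaces-pressure-total}, and \eqref{E:concrete-spaces-pressure}. When $\sigma=0$, one inspects the two subcases of $\bbD$: if $\Gamma_{u,E}=\partial\Domain$, then $\bbD=L^2_0(\Domain)$, and the relevant definitions of $\bbP$ (first or second case) also impose the vanishing mean-value condition, hence $\bbP\subseteq\bbD$ and in particular $\bbP_\sfs\subseteq \Poly{\sfk}(\Mesh)\cap H^1(\Domain)\cap\bbD=\bbD_\sfs$; if $\Gamma_{u,E}\neq\partial\Domain$, then $\bbD=L^2(\Domain)$ and the inclusion is automatic. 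Since Lagrange functions are continuous, the $H^1$-intersection in the definition of $\bbD_\sfs$ is also free.
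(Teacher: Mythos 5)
Your proof is correct and follows essentially the same route as the paper: conformity makes the energy norms coincide in Part (1), Hood--Taylor stability for Part (2), reducing (H3) to $H^1$-stability of the $L^2$-projection for Part (3), and a two-subcase check for Part (4). Two small remarks: the continuity of the divergence should carry the $\mu$-scaling, i.e.\ $\Norm{\calD V}_\Domain\lesssim\mu^{-1/2}\Norm{V}_\bbU$, to match the $\mu$-weighted upper bound in \eqref{A:Stokes-inf-sup}; and the $H^1$-stability threshold $(\sqrt{2\sfk+d}+\sqrt{\sfk})/(\sqrt{2\sfk+d}-\sqrt{\sfk})$ actually comes from \cite{Diening.Storn.Tscherpel:21}, while \cite{Diening.Storn.Tscherpel:23} supplies the notion of mesh grading.
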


\begin{proof}
\eqref{P:va-i}\; Owing to the inclusions $\bbU_\sfs  \subseteq \bbU$ and
$\bbP_\sfs  \subseteq \bbP$, there is no need to extend the norms
$\Norm{\cdot}_\bbU$ and $\Norm{\cdot}_\bbP$. Moreover, the combination
of \eqref{E:norms-discrete} with
\eqref{E:concrete-operator-elasticity} and
\eqref{E:concrete-operator-laplacian} readily implies
$\Norm{\cdot}_{\bbU_\sfs } = \Norm{\cdot}_\bbU$ in $\bbU_\sfs $ as
well as $\Norm{\cdot}_{\bbP_\sfs } = \Norm{\cdot}_\bbP$ in $\bbP_\sfs
$. 

\eqref{P:va-ii}\; The upper bound in \eqref{A:Stokes-inf-sup} follows
from the boundedness of $\calD_\sfs $, which satisfies
$\Norm{\calD_\sfs  \cdot}_\Domain^2 \leq \Norm{\calD \cdot}_\Domain^2
\lesssim \mu^{-1} \Norm{\cdot}_\bbU^2$
in $\bbU_\sfs $, according to
\eqref{E:abstract-operators-concrete}, \eqref{E:norms} and
\eqref{E:concrete-operator-divergence}. 
The lower bound can be
rephrased as 
\begin{equation*}
c \Norm{\cdot}_\Domain^2 \leq \mu \left( \sup_{V \in \bbU_\sfs }
  \dfrac{( \calD V, \cdot)_\Domain}{\Norm{V}_\bbU}\right)^2  
\end{equation*}
in $\bbD_\sfs$. The definitions \eqref{E:abstract-operators-concrete}
and \eqref{E:norms} reveal that this is equivalent to the discrete
Stokes inf-sup stability of the pair $\bbU_\sfs /\bbD_\sfs$, i.e. of
the Hood-Taylor pair. The latter condition is known to hold true under
the above assumption on the mesh; see \cite[sections~54.3 and
54.4]{Ern.Guermond:21b}. 

\eqref{P:va-iii}\; We have
\begin{equation*}
\Norm{\calP_{\bbP_\sfs }^*\cdot}_{\bbP^*} 
= 
\sup_{w \in \bbP}
\dfrac{\left\langle \cdot, \calP_{\bbP_\sfs } w\right\rangle_{\bbP_\sfs } }{\Norm{w}_\bbP}
=
\sup_{w \in \bbP}
\dfrac{( \cdot, \calP_{\bbP_\sfs } w)_\Domain }{\Norm{w}_\bbP}
\end{equation*}
in $\bbP_\sfs ^*$. The inclusion $\bbP_\sfs  \subseteq \bbP$ implies
the lower bound in \eqref{A:L2-projection} with $c =1$, because
$\calP_{\bbP_\sfs }$ is a projection onto $\bbP_\sfs $. The upper
bound is equivalent to the $\bbP$-stability of $\calP_{\bbP_\sfs
}$; cf.~\cite{Tantardini.Veeser:16}.
Owing to \eqref{E:norms} and \eqref{E:concrete-spaces-pressure},
this is the $H^1(\Domain)$-stability of the $L^2(\Domain)$-orthogonal
projection onto $H^1$-conforming Lagrange finite element spaces. Such
a condition is known to hold under the asserted grading assumption
thanks to~\cite[Theorem~4.14(ii)]{Diening.Storn.Tscherpel:21} together
with \cite[Theorem~4.4]{Diening.Storn.Tscherpel:21} and \cite[Remark
4.4]{Diening.Storn.Tscherpel:21}. 

\eqref{P:va-iv}\; Let $\sigma = 0$. The combination of
\eqref{E:concrete-spaces-pressure-total} and
\eqref{E:concrete-spaces-pressure}  with
\eqref{E:abstract-spaces-pressure} and
\eqref{E:abstract-spaces-pressure-total} implies, for $\Gamma_{u,E} =
\partial \Domain$ that
\begin{equation*}
\bbP_\sfs  = 
\Poly{\sfk}(\Mesh) \cap H^1_{\Gamma_{p,E}}(\Domain) \cap L^2_0(\Domain)
\quad \text{and} \quad
\bbD_\sfs = \Poly{\sfk}(\Mesh) \cap H^1(\Domain) \cap L^2_0(\Domain).
\end{equation*}
For $\Gamma_{u,E} \neq \partial \Domain$ we have instead,
\begin{equation*}
\bbP_\sfs  \subseteq
\Poly{\sfk}(\Mesh) \cap H^1_{\Gamma_{p,E}}(\Domain)
\quad \text{and} \quad
\bbD_\sfs = \Poly{\sfk}(\Mesh) \cap H^1(\Domain).
\end{equation*}
In both cases we have $\bbP_\sfs  \subseteq \bbD_\sfs$, i.e.,
assumption \eqref{A:inclusion} is verified. 
\end{proof}

\begin{remark}[Assumptions for \eqref{A:L2-projection}]\label{R:L2-projection}
  \label{R:assumption-L2-projection}
  We refer to \cite[Definitions 1.1]{Diening.Storn.Tscherpel:23} for
  the exact definition of mesh grading. Clearly the grading of quasi-uniform 
  meshes equals 1 and thus satisfies the grading
  condition in
  Proposition~\ref{P:verification-assumptions}\eqref{P:va-iii}. It
  follows from 
  \cite[Theorem~1.3]{Diening.Storn.Tscherpel:23}, that the grading
  condition is also
  satisfied for all polynomial degrees \(\sfk\ge1\) and
  dimensions \(d\le 6\) provided \(\Mesh\) is obtained by adaptive
  bisection of a colored initial mesh; see
  \cite[Assumption~3.1]{Diening.Storn.Tscherpel:23} for the notion of
  colored mesh. Note that assumption~\eqref{A:L2-projection} can be
  verified under different assumptions. Indeed, the
  $H^1(\Domain)$-stability of the $L^2(\Domain)$-orthogonal projection
  onto finite element spaces has been extensively analyzed by various
  authors; we refer~\cite{Diening.Storn.Tscherpel:21} for an overview of
  the existing results. 
\end{remark}

Having verified all assumptions in
section~\ref{SS:abstract-discretization-space}, we deduce the
well-posedness of the discretization
\eqref{E:BiotProblem-discretization} with the spaces and the operators
proposed in this section. In particular, the inclusions $\bbU_\sfs
\subseteq \bbU$ and $\bbP_\sfs  \subseteq \bbP$ suggest to define the
data in the discretization by restriction of the data in the weak
formulation \eqref{E:BiotProblem-weak-formulation} 
\begin{equation}
\label{E:data-discrete}
L_u = \ell_{u|\bbS^0_\sft (\bbU_\sfs )},
\qquad 
L_p = \ell_{p|\bbS^0_\sft (\bbP_\sfs )}
\qquad \text{and}\qquad
L_0 = \ell_{0|\bbP_\sfs }.
\end{equation}

\begin{theorem}[Well-posedness with Lagrange elements]
\label{T:well-posedness-Lagrange}
Let the spaces $\bbU_\sfs $, $\bbD_\sfs$ and $\bbP_\sfs $ and the
operators $\calE_\sfs $, $\calD_\sfs $ and $\calL_\sfs $ be defined by
\eqref{E:concrete-spaces-displacement}-\eqref{E:concrete-operator-laplacian}. Assume
that $\Mesh$ is obtained from a colored initial mesh by newest vertex
bisection and that each $\Dim$-simplex in $\Mesh$ has at least one
vertex in the interior of $\Domain$. Then, the
discretization~\eqref{E:BiotProblem-discretization} with the data
\eqref{E:data-discrete} has a unique
solution \label{E:well-posedness-solution} $Y_1 = (U, P_\Tot, P, M,
M_0) \in \bbY_{1, \sfs\sft}$ with 
\begin{equation}
\label{E:well-posedness-Lagrange}
\begin{split}
\Norm{Y_1}_{1,\sfs\sft}^2
&\eqsim
\Norm{Y_1}_{1,\sfs\sft}^2 + 
\Norm{M}^2_{L^\infty(\bbP_\sfs ^*)}
+\int_0^T\left( \lambda \Norm{\calD_\sfs  U}^2_{\Domain}  
+ \gamma^{-1} \Norm{P}^2_{\Domain} \right)\dt \\
& \eqsim \int_{0}^T \left(
\Norm{L_u}^2_{\bbU^*_\sfs } +
\Norm{L_p}^2_{\bbP^*_\sfs } \right)\dt +
\Norm{L_0}^2_{\bbP^*_\sfs }\\
& \lesssim \int_{0}^T \left(
\Norm{\ell_u}^2_{\bbU^*} +
\Norm{\mathcal{\ell}_p}^2_{\bbP^*} \right)\dt +
\Norm{\ell_0}^2_{\bbP^*}.
\end{split}
\end{equation}
All hidden constants depend only on the quantities mentioned in Remark~\ref{R:hidden-constants}.
\end{theorem}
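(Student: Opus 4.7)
The plan is to chain the concrete setup to the abstract framework: first, verify that the mesh hypotheses imply all four assumptions \eqref{A:extended-norms}--\eqref{A:inclusion}; second, apply Theorem~\ref{T:well-posedness-discretization} to get existence, uniqueness and the two inner norm equivalences in \eqref{E:well-posedness-Lagrange}; third, control the discrete data norms by the continuous ones to obtain the final inequality. Steps one and two are essentially bookkeeping given what has already been proved, while step three boils down to elementary properties of restrictions of functionals on dense/included subspaces.

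For step one, I would directly invoke Proposition~\ref{P:verification-assumptions}. Part \eqref{P:va-i} gives \eqref{A:extended-norms} unconditionally; part \eqref{P:va-iv} gives \eqref{A:inclusion} unconditionally. The hypothesis that each $d$-simplex has at least one vertex in the interior of $\Omega$ is precisely the mesh condition required by part \eqref{P:va-ii} to yield \eqref{A:Stokes-inf-sup}. For \eqref{A:L2-projection}, I would use Remark~\ref{R:assumption-L2-projection}: a mesh obtained from a colored initial mesh by newest vertex bisection has grading bounded by the constant from~\cite{Diening.Storn.Tscherpel:23}, which is strictly less than the threshold in Proposition~\ref{P:verification-assumptions}\eqref{P:va-iii}. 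All constants that appear depend only on the quantities listed in Remark~\ref{R:hidden-constants}.

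For step two, I would apply Theorem~\ref{T:well-posedness-discretization} to the concrete instance, yielding both existence/uniqueness of $Y_1 = (U, P_\Tot, P, M, M_0) \in \bbY_{1,\sfs\sft}$ and the first two equivalences in \eqref{E:well-posedness-Lagrange} (namely, \eqref{E:well-posedness-discretization-norm-equivalence} and \eqref{E:well-posedness-discretization}). Since the hidden constants in Theorem~\ref{T:well-posedness-discretization} depend only on $T$ and on the constants in \eqref{A:extended-norms}--\eqref{A:Stokes-inf-sup} (which, by step one, depend only on the quantities in Remark~\ref{R:hidden-constants}), the dependence claim is preserved.

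For step three (the only piece not directly inherited), I would use the inclusions $\bbU_\sfs \subseteq \bbU$ and $\bbP_\sfs \subseteq \bbP$ together with the identities $\Norm{\cdot}_{\bbU_\sfs}=\Norm{\cdot}_\bbU$ on $\bbU_\sfs$ and $\Norm{\cdot}_{\bbP_\sfs}=\Norm{\cdot}_\bbP$ on $\bbP_\sfs$ from Proposition~\ref{P:verification-assumptions}\eqref{P:va-i}. For any functional $\ell \in \bbU^*$, the restriction $\ell_{|\bbU_\sfs}$ satisfies
\begin{equation*}
\Norm{\ell_{|\bbU_\sfs}}_{\bbU_\sfs^*} = \sup_{V \in \bbU_\sfs} \frac{\langle \ell, V\rangle_\bbU}{\Norm{V}_{\bbU_\sfs}} = \sup_{V \in \bbU_\sfs} \frac{\langle \ell, V\rangle_\bbU}{\Norm{V}_{\bbU}} \leq \Norm{\ell}_{\bbU^*},
\end{equation*}
and similarly for $\ell_p$ restricted to $\bbP_\sfs$ and $\ell_0$ restricted to $\bbP_\sfs$. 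Combined with the definition \eqref{E:data-discrete} of $(L_u, L_p, L_0)$, this yields the pointwise-in-time (and initial) bounds; integrating in time gives the last inequality of \eqref{E:well-posedness-Lagrange} with constant $1$. I do not anticipate a genuine obstacle in this proof: the heart of the matter has already been done in Theorem~\ref{T:well-posedness-discretization} and Proposition~\ref{P:verification-assumptions}, and this statement is essentially a packaging result making the verified hypotheses explicit in the Lagrange setting.
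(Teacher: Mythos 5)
Your proposal is correct and follows essentially the same route as the paper's proof: invoke Proposition~\ref{P:verification-assumptions} (with Remark~\ref{R:assumption-L2-projection} linking the colored-bisection mesh hypothesis to the grading condition for \eqref{A:L2-projection}) to verify the abstract assumptions, then apply Theorem~\ref{T:well-posedness-discretization} to get existence, uniqueness, and the first two equivalences, and finally bound the discrete data norms by the continuous ones via the inclusions $\bbU_\sfs \subseteq \bbU$, $\bbP_\sfs \subseteq \bbP$ and the norm identities from Proposition~\ref{P:verification-assumptions}\eqref{P:va-i}. The only difference is that you spell out the restriction estimate $\Norm{\ell_{|\bbU_\sfs}}_{\bbU_\sfs^*}\leq\Norm{\ell}_{\bbU^*}$ explicitly, which the paper leaves implicit but is exactly the point.
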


\begin{proof}
The combination of Theorem~\ref{T:well-posedness-discretization} with
Proposition~\ref{P:verification-assumptions} implies the existence and
the uniqueness of the solution and guarantees that the two
equivalences in \eqref{E:well-posedness-Lagrange} hold true. Then, the
upper bound follows from the discretization \eqref{E:data-discrete} of
the data and the inclusions $\bbU_\sfs  \subseteq \bbU$ and $\bbP_\sfs
\subseteq \bbP$. 
\end{proof}

\subsection{Quasi-optimality}
\label{SS:quasi-optimality}
In order to quantify the accuracy of the proposed discretization, we
need an error notion that is related to both the trial norm
$\Norm{\cdot}_1$ in \eqref{E:trial-norm} and to the discrete trial
norm $\Norm{\cdot}_{1,\sfs\sft}$ in \eqref{E:trial-norm-discrete},
cf. \eqref{E:error-notion-restrictions} below. A major issue is that
the former one involves the dual norm in $\bbP$, whereas the latter
one involves the dual norm in $\bbP_\sfs $.  

In order to compare functionals defined on the two spaces, we can use
the adjoint $\calP^*:\bbP_\sfs ^* \to \bbP^*$ of a bounded projection
$\calP: \bbP \to \bbP_\sfs $. The specific choice of $\calP$ is
critical because of the term $\partial_t \widetilde m + \calL
\widetilde p $ in the norm $\Norm{\cdot}_1$. Since this term acts in
space via the pairing $\left\langle \cdot, \cdot \right\rangle_{\bbP}
$, one may want to employ the $L^2(\Domain)$-orthogonal projection,
because the pivot space in the Hilbert triplet
\eqref{E:Hilbert-triplet} is equipped with the
$L^2(\Domain)$-norm. Still, according to \eqref{E:norms}, the action
of $\calL$ induces the $\bbP$-norm, thus suggesting to make use of the
$\bbP$-orthogonal projection. Of course, similar comments apply to the
counterpart of $\partial_t \widetilde m + \calL \widetilde p$ in
$\Norm{\cdot}_{1,\sfs\sft}$. 

Both options have advantages and disadvantages. The latter one, being
related to the $\bbP$-norm, suggests the use of piecewise polynomials
of degree $\sfk+1$ for the discretization of $\bbP$, but this would be
critical for the assumption~\eqref{A:inclusion}, as $\bbD_\sfs$
consists of piecewise polynomials of degree $\sfk$,
cf. \eqref{E:concrete-spaces-pressure-total}-\eqref{E:concrete-spaces-pressure}. The
former option does not have this issue, therefore we go for it. Still,
the derivation of higher-order decay rates in space appears
critical in this case, cf. Remark~\ref{R:decay-rate}. 

In accordance with the above discussion and with the inclusions
$\bbU_\sfs  \subseteq \bbU$ and $\bbP_\sfs  \subseteq \bbP$, we
consider the error notion $\texttt{Err}:\overline \bbY_1\times\bbY_{1,
  \sfs\sft}\to [0, +\infty)$ defined as  
\begin{align}
\label{E:error-notion}
\begin{aligned}
&\Err{\widetilde y_1}{\widetilde Y_1}^2
:= \int_{0}^{T} \left( \Norm{u- \widetilde U}^2_\bbU +
\dfrac{1}{\mu}\Norm{p_\Tot- \widetilde P_\Tot}_\Domain^2 \right) \dt\\ 
&\quad+ \int_0^T
\Norm{\partial_t \widetilde m+\calL \widetilde p-\calP_{\bbP_\sfs }^*(\mathrm{d}_\sft (\widetilde M, \widetilde M_0) + \calL_\sfs  \widetilde
	P)}^2_{\bbP^*} \dt 
\\
&\quad + \Norm{ \widetilde m(0)-\calP_{\bbP_\sfs }^*\widetilde M_0}^2_{\bbP^*}
\\
&\quad+ \int_{0}^T  \dfrac{1 }{\lambda+\mu} \Norm{\lambda \calD \widetilde u - \widetilde p_\Tot - \alpha \calP_\bbD \widetilde p - (\lambda
	\calD_\sfs  \widetilde U - \widetilde P_\Tot - \alpha \calP_{\bbD_\sfs }
	\widetilde P)}^2_\Domain \dt\\
&\quad+ \int_0^T \gamma\Norm{\alpha \calP_{\overline \bbP} \calD
	\widetilde u + \sigma \widetilde p - \widetilde m - (\alpha \calP_{\bbP_\sfs } \calD_\sfs 
	\widetilde U + \sigma \widetilde P - \widetilde M)}^2_\Domain \dt
\end{aligned}
\end{align}
for \(\widetilde y_1=(\widetilde
u,\widetilde p_\Tot,\widetilde p,\widetilde m)\in \overline \bbY_1\) and \(\widetilde
Y_1=(\widetilde U,\widetilde P_\Tot,\widetilde P,\widetilde M,\widetilde M_0)\in \bbY_{1,\sfs\sft}\). Notice that this is not a norm on the sum $\overline \bbY_1 + \bbY_{1, \sfs\sft}$. For instance, in general, we have $\calD \widetilde U \neq \calD_\sfs  \widetilde U$ for $\widetilde U \in \bbS^0_\sft (\bbU_\sfs ) \subseteq L^2(\bbU)$. Still, the above error notion measures the accuracy in the approximation of all functions and functionals involved in the trial norm $\Norm{\cdot}_1$ and it holds that
\begin{equation}
\label{E:error-notion-restrictions}
\Err{\widetilde y_1}{0} = \Norm{\widetilde y_1}_{1}
\qquad \text{and} \qquad
\Err{0}{\widetilde Y_1} \eqsim \Norm{\widetilde Y_1}_{1,\sfs\sft}.
\end{equation}
Indeed, the second equivalence follows from~\eqref{P:va-i} and~\eqref{P:va-iii} in Proposition~\ref{P:verification-assumptions}.

The equations \eqref{E:BiotProblem-discretization} with the spaces and
the operators defined by
\eqref{E:concrete-spaces-displacement}-\eqref{E:concrete-operator-laplacian}
are not a conforming Petrov-Galerkin discretization of the weak
formulation \eqref{E:BiotProblem-weak-formulation}. Indeed, the trial
space $\bbY_{1, \sfs\sft}$ in the former problem is not a subspace of
its counterpart $\overline \bbY_1$ in the latter one. Nevertheless, we
have for the test spaces the inclusion 
\begin{align*}
 \bbY_{2,\sfs\sft}\subseteq\bbY_2.
\end{align*}
This property and the definition \eqref{E:data-discrete} of the load
in \eqref{E:BiotProblem-discretization} by restriction of the one in
\eqref{E:BiotProblem-weak-formulation} ensure that we can still
guarantee the fundamental \emph{quasi-optimality} property of inf-sup stable
conforming Petrov-Galerkin methods by a standard 
argument; cf. \cite{Babuska:70}.  

\begin{theorem}[Quasi-optimality]
\label{T:quasi-optimality}
Let all assumptions in Theorem~\ref{T:well-posedness-Lagrange} be verified. Denote by \(y_1\in\overline{\bbY}_1\) and $Y_1 \in \bbY_{1, \sfs\sft}$, respectively, the solutions of \eqref{E:BiotProblem-weak-formulation} and \eqref{E:BiotProblem-discretization} with \eqref{E:data-discrete}. Then, we have
\begin{align}
\label{E:quasi-optimality}
  \Err{y_1}{Y_1}\lesssim \inf_{\widetilde Y_1\in\bbY_{1,\sfs\sft}}\Err{y_1}{\widetilde
  Y_1}.
\end{align}
The hidden constant depends only on the quantities mentioned in Remark~\ref{R:hidden-constants}.
\end{theorem}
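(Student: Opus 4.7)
The plan is to adapt Babu\v{s}ka's classical quasi-optimality argument to the present nonconforming setting, in which $\bbY_{1,\sfs\sft}$ is not a subspace of $\overline{\bbY}_1$ (the discrete fluid content is only piecewise constant in time and the discrete operators differ from their continuous counterparts). The starting point is a \emph{consistency identity}: since $\bbY_{2,\sfs\sft} \subseteq \bbY_2$ and the data are defined by restriction in~\eqref{E:data-discrete}, for every $Y_2 \in \bbY_{2,\sfs\sft}$ we have
\[
b_{\sfs\sft}(Y_1, Y_2) = \langle L, Y_2\rangle_{\bbY_{2,\sfs\sft}} = \langle \ell, Y_2\rangle_{\bbY_2} = b(y_1, Y_2),
\]
where the outer equalities use that $Y_1$ and $y_1$ solve \eqref{E:BiotProblem-discretization} and \eqref{E:BiotProblem-weak-formulation}, respectively.

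Given an arbitrary $\widetilde Y_1 \in \bbY_{1,\sfs\sft}$, I would first apply a termwise triangle inequality to the error notion to obtain $\Err{y_1}{Y_1} \lesssim \Err{y_1}{\widetilde Y_1} + \Err{0}{Y_1-\widetilde Y_1}$, and then replace the second summand by $\Norm{Y_1 - \widetilde Y_1}_{1,\sfs\sft}$ via \eqref{E:error-notion-restrictions}. Since $Y_1 - \widetilde Y_1 \in \bbY_{1,\sfs\sft}$, the discrete inf-sup stability (Lemma~\ref{L:inf-sup}, the same estimate underlying Theorem~\ref{T:well-posedness-discretization}) combined with the above identity yields
\[
\Norm{Y_1 - \widetilde Y_1}_{1,\sfs\sft} \lesssim \sup_{Y_2 \in \bbY_{2,\sfs\sft}} \frac{b(y_1,Y_2) - b_{\sfs\sft}(\widetilde Y_1, Y_2)}{\Norm{Y_2}_2}.
\]
Taking the infimum over $\widetilde Y_1$ then reduces \eqref{E:quasi-optimality} to proving the \emph{consistency bound} $|b(y_1, Y_2) - b_{\sfs\sft}(\widetilde Y_1, Y_2)| \lesssim \Err{y_1}{\widetilde Y_1}\,\Norm{Y_2}_2$.

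The consistency bound is the heart of the argument and the main obstacle. My plan is to pair the six integrals of $b(y_1,Y_2)$ with the six integrals of $b_{\sfs\sft}(\widetilde Y_1, Y_2)$ and verify that each difference reproduces exactly one of the five contributions in the definition~\eqref{E:error-notion} of $\Err{y_1}{\widetilde Y_1}$. The identities~\eqref{E:concrete-operator-elasticity}, \eqref{E:concrete-operator-divergence} and~\eqref{E:concrete-operator-laplacian} ensure that $\calE_\sfs$, $\calD_\sfs$ and $\calL_\sfs$ coincide with $\calE$, $\calD$ and $\calL$ when tested against the conforming finite element functions in $\bbU_\sfs$, $\bbD_\sfs$ and $\bbP_\sfs$, so the elasticity, divergence and diffusion contributions combine cleanly into the first, second, fifth and sixth terms of \eqref{E:error-notion}, each paired by Cauchy-Schwarz with the appropriately weighted factor of $\Norm{Y_2}_2$, in the same manner as in the boundedness proof (Lemma~\ref{L:boundedness}). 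The delicate step concerns the time derivative and the initial value: here I would use the elementary identity $\langle L, N \rangle_{\bbP_\sfs} = \langle \calP_{\bbP_\sfs}^* L, N\rangle_\bbP$ for $L \in \bbP_\sfs^*$ and $N \in \bbP_\sfs \subseteq \bbP$ to rewrite
\[
\langle \mathrm{d}_\sft(\widetilde M, \widetilde M_0) + \calL_\sfs \widetilde P, N\rangle_{\bbP_\sfs} = \langle \calP_{\bbP_\sfs}^*\bigl(\mathrm{d}_\sft(\widetilde M, \widetilde M_0) + \calL_\sfs \widetilde P\bigr), N\rangle_\bbP,
\]
and similarly for $\langle \widetilde M_0, N_0\rangle_{\bbP_\sfs}$. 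These reformulations produce precisely the $\bbP^*$-norms in the third and fourth lines of~\eqref{E:error-notion}, so Cauchy-Schwarz finishes the termwise matching. Conceptually, this is exactly the motivation behind the use of $\calP_{\bbP_\sfs}^*$ in the design of $\Err{\cdot}{\cdot}$; any other projection would interfere with either the $\bbP$-duality or the identification~\eqref{E:Hilbert-triplet-discrete}. Once the consistency bound is in place, combining it with the triangle inequality, the inf-sup stability and the infimum over $\widetilde Y_1$ yields \eqref{E:quasi-optimality} with a hidden constant depending only on the quantities in Remark~\ref{R:hidden-constants}.
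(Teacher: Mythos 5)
Your proposal is correct and reproduces the paper's argument essentially verbatim: consistency identity from $\bbY_{2,\sfs\sft}\subseteq\bbY_2$ and \eqref{E:data-discrete}, termwise triangle inequality plus \eqref{E:error-notion-restrictions}, discrete inf-sup stability from Lemma~\ref{L:inf-sup}, and a termwise matching of $b(y_1,\cdot)-b_{\sfs\sft}(\widetilde Y_1,\cdot)$ against the contributions of \eqref{E:error-notion} via the operator identities \eqref{E:concrete-operator-elasticity}--\eqref{E:concrete-operator-laplacian}, the invariance of $\calP_{\bbP_\sfs}$ on $\bbP_\sfs$, and Cauchy--Schwarz against $\Norm{\cdot}_2$. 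The only small imprecision is the phrasing that the ``diffusion contribution'' combines into the first, second, fifth and sixth terms of \eqref{E:error-notion}; in fact $\calL_\sfs\widetilde P$ must stay attached to the discrete time derivative inside the $\calP_{\bbP_\sfs}^*(\cdot)$ term, as you correctly do in the next sentence.
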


\begin{proof}
For $\widetilde Y_1 \in \bbY_{1, \sfs\sft}$, the triangle inequality and \eqref{E:error-notion-restrictions} imply
\begin{align*}
\Err{y_1}{Y_1}
\leq
\Err{y_1}{\widetilde Y_1} + \Err{0}{Y_1 - \widetilde Y_1}
\lesssim 
\Err{y_1}{\widetilde Y_1}+\Norm{Y_1-\widetilde Y_1}_{1,\sfs\sft}.
\end{align*}
According to the inf-sup stability in Lemma~\ref{L:inf-sup}, we have
\begin{align*}
\Norm{Y_1-\widetilde Y_1}_{1,\sfs\sft}
\lesssim
\sup_{Y_2\in \bbY_{2,h}}
\frac{b_{\sfs\sft}(Y_1-\widetilde Y_1,Y_2)}{\|Y_2\|_2}
= 
\sup_{Y_2\in \bbY_{2}^h}
\frac{b(y_1,Y_2)-b_{\sfs\sft}(\widetilde Y_1,Y_2)}{\|Y_2\|_2},
\end{align*}
where the identity follows by comparing  problems
\eqref{E:BiotProblem-weak-formulation} and
\eqref{E:BiotProblem-discretization}. We exploit the definitions
\eqref{E:BiotProblem-abstract-form},
\eqref{E:BiotProblem-discrete-form}, and
\eqref{E:concrete-operator-elasticity}-\eqref{E:concrete-operator-laplacian},
as well as the invariance of the projection $\calP_{\bbP_\sfs }$ onto
$\bbP_\sfs$. For $y_1 = (u, p_\Tot, p, m)$ and $\widetilde Y_1 =
(\widetilde U, \widetilde P_\Tot, \widetilde P, \widetilde M,
\widetilde M_0)$, this results in  
\begin{equation}
\label{E:quasi-optimality-proof}
\begin{split}
&b(y_1, Y_2) - b_{\sfs\sft}(\widetilde Y_1, Y_2) :=\\
&\qquad \phantom{+} 
\int_0^T \Big( \big\langle \calE (u - \widetilde U), V\big\rangle_\bbU 
+ 
\big( p_\Tot - \widetilde P_\Tot,\calD V \big)_{\Domain} \Big)\\
&\qquad +
\int_{0}^T \big\langle \partial_t m + \calL \widetilde P - \calP_{\bbP_\sfs }^*( \mathrm{d}_\sft  (\widetilde M, \widetilde M_0) + \calL_\sfs  \widetilde P), N \big\rangle_{\bbP}\dt\\
& \qquad +
\big\langle m - \calP_{\bbP_\sfs }^*\widetilde M_0, N_0 \big\rangle_{\bbP}
\\
& \qquad 
+ \int_0^T  \big(\lambda \calD u - p_\Tot - \alpha\calP_{\bbD} p - (
\lambda \calD_\sfs  \widetilde U - \widetilde 
P_\Tot - \alpha \calP_{\bbD_\sfs }\widetilde P),
Q_\Tot\big)_{\Domain}
\\ 
& \qquad + \int_0^T
\big(\alpha \calP_{\overline \bbP} \calD u + \sigma p - m - ( \alpha
\calP_{\bbP_\sfs }\calD_\sfs  \widetilde U + \sigma \widetilde P -
\widetilde M), Q 
\big)_{\Domain} \dt. 
\end{split} 
\end{equation}
Then Cauchy-Schwarz inequalities, the bound $\Norm{\calD V}_{\Domain}^2 \lesssim \mu^{-1} \Norm{V}^2_\bbU$ and the definition \eqref{E:test-norm} of the test norm $\Norm{\cdot}_2$ yield
\begin{equation*}
\Norm{Y_1 - \widetilde Y_1}_{1,\sfs\sft}
\lesssim \Err{y_1}{\widetilde Y_1}.
\end{equation*}
Inserting this estimate into the first inequality above concludes the proof.
\end{proof}

\begin{remark}[Augmented error notion]
\label{R:augmented-error-notion}
Our definition of the error notion is in a sense minimal, because we have included only the terms that arise by applying the Cauchy-Schwarz inequality in \eqref{E:quasi-optimality-proof}. Still, the first equivalence in \eqref{E:well-posedness-Lagrange} clarifies that that the statement and the proof of Theorem~\ref{T:quasi-optimality} will remain unchanged if we augment $\Err{\cdot}{\cdot}$ by adding any of the following terms
\begin{equation*}
\Norm{\widetilde m - \calP_{\bbP_\sfs }^*\widetilde M}^2_{L^\infty(\bbP^*)},
\qquad
\int_0^T \lambda \Norm{\calD \widetilde u - \calD_\sfs  \widetilde U}^2_{\Domain}\dt,
\qquad\text{and}\qquad
\int_0^T \gamma^{-1} \Norm{\widetilde p - \widetilde P}^2_{\Domain} \dt .
\end{equation*} 
\end{remark}

\subsection{A priori error estimates}
\label{sec:error-analysis}
The quasi-optimality stated above has two remarkable properties and
(at least) one clear disadvantage. One the one hand, the estimate
\eqref{E:quasi-optimality} holds true for any solution of the weak
formulation \eqref{E:BiotProblem-weak-formulation} (i.e. no additional
regularity is required) and the hidden constant is robust with respect
to all material parameters. On the other hand, it is not immediate how (and even if) the error decays to zero, because the
definition \eqref{E:error-notion} of $\Err{\cdot}{\cdot}$ involves a
nontrivial coupling of the various components of the solution.   

In this section, we investigate the latter aspect. Roughly speaking,
we aim at showing that the best error in the right-hand side of
\eqref{E:quasi-optimality} is equivalent to a sum of best errors. In
order to preserve the two above-mentioned properties, we make sure
also that the constants in the equivalence are independent of the
material parameters and that no additional regularity of the solution
of \eqref{E:BiotProblem-weak-formulation} is required beyond the
minimal one, namely $y_1 \in \overline{\bbY}_1$. When such a result is
available, the decay of the error to zero can be easily discussed in
terms of classical results from approximation theory.  

The precise statement of our main result is as follows. Notice that
the last term on the right-hand side of \eqref{E:decoupling} does not
appear in our definition of the error notion, but we could
equivalently include it  according to Remark~\ref{R:augmented-error-notion}.

\begin{theorem}[Best error decoupling]
\label{T:decoupling}
Let all assumptions in Theorem~\ref{T:well-posedness-Lagrange} be
verified and denote by $y_1 = (u, p_\Tot, p, m) \in \overline{\bbY}_1$
the solution of \eqref{E:BiotProblem-weak-formulation}. Then we have that 
\begin{equation}
\label{E:decoupling}
\begin{split}
&
\inf_{\widetilde Y_1 \in \bbY_{1, \sfs\sft}} \Err{y_1}{\widetilde Y_1}^2
\lesssim \\
&\;\phantom{+}
\inf_{\widehat U\in\bbS^0_\sft (\bbU_\sfs )}\int_0^T\|u-\widehat U\|_\bbU^2\dt
+
\inf_{\widehat P_\Tot\in\bbS^0_\sft (\bbD)}\int_0^T\frac1\mu\|p_\Tot-\widehat{P}_\Tot\|_\Domain^2\dt\\
&\;+
\inf_{\widehat W\in\bbS^0_\sft (\bbP_\sfs )}\int_0^T\|\partial_t m+\calL p-\calP_{\bbP_\sfs }^*\widehat W\|_{\bbP^*}^2\dt
+
\inf_{\widehat M_0\in\bbP_\sfs}\|m(0)-\calP_{\bbP_\sfs}^*\widehat M_0\|_{\bbP^*}^2 
\\
&\;
+
\varepsilon_\sfs^{-1}
\inf_{\widehat P\in\bbS^0_\sft (\bbP_\sfs )}\int_0^T \dfrac{1}{\gamma}\|p-\widehat P\|^2_\Domain\dt
.
\end{split}
\end{equation}
The hidden constant depends on the quantities mentioned in Remark~\ref{R:hidden-constants} and on the constant in \eqref{E:interpolation-time-shape-constant} and $\varepsilon_\sfs$ is defined in \eqref{E:discrete-elliptic-regularity} below.
\end{theorem}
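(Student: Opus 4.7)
The plan is to exhibit an explicit $\widetilde Y_1 \in \bbY_{1,\sfs\sft}$ for which $\Err{y_1}{\widetilde Y_1}^2$ is bounded by the right-hand side of~\eqref{E:decoupling}. Given (near-)minimizers $\widehat U \in \bbS^0_\sft(\bbU_\sfs)$, $\widehat P_\Tot \in \bbS^0_\sft(\bbD)$, $\widehat W \in \bbS^0_\sft(\bbP_\sfs)$, $\widehat M_0 \in \bbP_\sfs$, and $\widehat P \in \bbS^0_\sft(\bbP_\sfs)$ of the five infima, the central ingredient is a Fortin-type interpolation $\widetilde U \in \bbS^0_\sft(\bbU_\sfs)$ of $u$, whose existence pointwise in time follows from the discrete Stokes inf-sup stability established in Proposition~\ref{P:verification-assumptions}\eqref{P:va-ii}; this interpolant satisfies both the commutativity $\calD_\sfs \widetilde U = \calP_{\bbD_\sfs}(\calD u)$ and a quasi-optimality bound $\Norm{u - \widetilde U}_\bbU \lesssim \inf_{V \in \bbS^0_\sft(\bbU_\sfs)} \Norm{u - V}_\bbU$. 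I would further set $\widetilde P := \widehat P$, $\widetilde M_0 := \widehat M_0$, and fix the remaining components through the algebraic relations of~\eqref{E:BiotProblem-discretization},
\begin{equation*}
\widetilde P_\Tot := \lambda \calD_\sfs \widetilde U - \alpha \calP_{\bbD_\sfs} \widetilde P
\qquad\text{and}\qquad
\widetilde M := \alpha \calP_{\bbP_\sfs} \calD_\sfs \widetilde U + \sigma \widetilde P,
\end{equation*}
in analogy with the treatment of the auxiliary variables in Remark~\ref{R:auxiliary-variables}. By construction, the fourth and fifth summands of $\Err$ in~\eqref{E:error-notion} vanish identically.

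With this setup, the remaining three summands of $\Err$ are estimated individually. The first summand reduces to the first summand on the right-hand side thanks to the quasi-optimality of the Fortin interpolant. For the total-pressure term I would exploit the identity $p_\Tot = \lambda \calD u - \alpha \calP_\bbD p$ from~\eqref{E:BiotProblem-weak-formulation} together with the Fortin commutativity, yielding a decomposition of $p_\Tot - \widetilde P_\Tot$ whose individual pieces are controlled, with the appropriate weighting provided by the definition~\eqref{E:gamma} of $\gamma$, by a combination of the first, second, and fifth summands on the right-hand side. The initial-condition term reduces to the fourth summand directly via $\widetilde M_0 = \widehat M_0$.

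The main obstacle is the third summand of $\Err$,
\begin{equation*}
\int_0^T \Norm{\partial_t m + \calL p - \calP_{\bbP_\sfs}^* \bigl( \mathrm{d}_\sft (\widetilde M, \widetilde M_0) + \calL_\sfs \widetilde P\bigr)}_{\bbP^*}^2 \dt,
\end{equation*}
which couples time and space discretizations through the composite object $\mathrm{d}_\sft \widetilde M + \calL_\sfs \widetilde P$. Inserting $\calP_{\bbP_\sfs}^* \widehat W$ as a pivot supplies the third summand on the right-hand side directly, and the remaining task is to bound $\Norm{\calP_{\bbP_\sfs}^*(\widehat W - \mathrm{d}_\sft(\widetilde M, \widetilde M_0) - \calL_\sfs \widetilde P)}_{\bbP^*}$ by best-approximation errors of $u$ and $p$. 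Two ingredients are needed to close this step: a time-approximation estimate for the piecewise-constant projection on the partition~\eqref{E:mesh-time}, whose constant appears in~\eqref{E:interpolation-time-shape-constant}, is used to relate $\mathrm{d}_\sft(\widetilde M, \widetilde M_0)$ to $\partial_t m$; and a discrete elliptic regularity estimate on $\bbP_\sfs$, encoded in the constant $\varepsilon_\sfs$ defined in~\eqref{E:discrete-elliptic-regularity}, converts $\bbP_\sfs^*$-bounds on terms of the form $\calL_\sfs(\widetilde P - \widehat P)$ into $L^2(\Domain)$-bounds on $\widetilde P - \widehat P$. This latter conversion is the source of the factor $\varepsilon_\sfs^{-1}$ in front of the fifth summand on the right-hand side, and assembling all of the above estimates completes the proof.
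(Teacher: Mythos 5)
Your idea — pick $\widetilde U$, $\widetilde P$, $\widetilde M_0$ as near-best approximants and then force $\widetilde P_\Tot$, $\widetilde M$ to satisfy the discrete constitutive laws so that the fourth and fifth summands of $\Err{\cdot}{\cdot}$ in \eqref{E:error-notion} vanish — is appealing, but it runs into two genuine obstructions. First, by making term~(v) (weighted by $\tfrac{1}{\lambda+\mu}$) vanish, you shift the price onto term~(ii) (weighted by $\tfrac{1}{\mu}$). Concretely, using the Fortin commutation $\calD_\sfs\widetilde U=\calP_{\bbD_\sfs}\calD u$ and the exact constitutive law $p_\Tot=\lambda\calD u-\alpha\calP_\bbD p$, one finds $p_\Tot-\widetilde P_\Tot = (I-\calP_{\bbD_\sfs})p_\Tot + \alpha\calP_{\bbD_\sfs}(\widetilde P-p)$, so $\tfrac{1}{\mu}\Norm{p_\Tot-\widetilde P_\Tot}^2_\Domain$ picks up a contribution $\tfrac{\alpha^2}{\mu}\Norm{p-\widehat P}^2_\Domain$. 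But the fifth summand on the right-hand side of \eqref{E:decoupling} carries the weight $\tfrac{1}{\gamma}\le\tfrac{\alpha^2}{\mu+\lambda}$, so absorbing $\tfrac{\alpha^2}{\mu}$ there incurs a factor $\tfrac{\mu+\lambda}{\mu}$, which is unbounded as $\lambda\to\infty$. The paper avoids this entirely by choosing $\widetilde P_\Tot = \calI_\Domain\calJ\,p_\Tot$ (a near-best $L^2$ approximation of $p_\Tot$), keeping term~(ii) matched directly, and using the commutative diagrams of Figure~\ref{F:Space-Operator-Diagram} to reduce term~(v) to the pressure best error with the correct weight $\tfrac{\alpha^2}{\mu+\lambda}=\tfrac{1}{\gamma}$.

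Second, and more seriously, defining $\widetilde M$ through the constitutive relation leaves $\mathrm{d}_\sft(\widetilde M,\widetilde M_0)$ uncontrolled. Your $\widetilde M_j-\widetilde M_{j-1}$ involves $\calD_\sfs(\widetilde U_j-\widetilde U_{j-1})$ and $\sigma(\widetilde P_j-\widetilde P_{j-1})$, hence jumps of interpolants of $u$ and $p$ — functions that the well-posedness theory only guarantees to lie in $L^2(\bbU)$ and $L^2(\bbP)$, with no $H^1$-in-time control. The weighted sum $\sum_j|I_j|^{-1}\Norm{\widetilde M_j-\widetilde M_{j-1}}^2_{\bbP_\sfs^*}$ therefore does not converge, and pivoting through $\widehat W$ cannot save this: the residual $\Norm{\calP_{\bbP_\sfs}^*(\widehat W-\mathrm{d}_\sft(\widetilde M,\widetilde M_0)-\calL_\sfs\widetilde P)}_{\bbP^*}$ is not a best-approximation quantity of $u$ or $p$. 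This is exactly why the paper chooses $\widetilde M=\calI_{\bbP^*}\calJ\,m$ and $\widetilde M_0=\calI_{\bbP^*}m(0)$ (interpolating $m$ itself, which does have $H^1(\bbP^*)$ time regularity) and then relies on the commutativity \eqref{E:interpolation-time-commutative} and $\calL_\sfs\calI_\bbP=\calI_{\bbP^*}\calL$ to obtain $\mathrm{d}_\sft(\widetilde M,\widetilde M_0)+\calL_\sfs\widetilde P=\calI_{\bbP^*}\widetilde\calJ(\partial_tm+\calL p)$, reducing term~(iii) directly to the third summand of \eqref{E:decoupling}. In short: the vanishing of terms~(v) and~(vi) is not free, and the coupling resurfaces in terms~(ii) and~(iii) in a way that destroys both the parameter robustness and the required regularity. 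The commuting interpolants of sections~\ref{SS:interpolation-time}--\ref{SS:interpolation-space} are not a technicality but the mechanism that distributes the coupling harmlessly; your argument lacks a substitute for them.
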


\begin{proof}
See sections~\ref{SS:interpolation-time}-\ref{SS:interpolation-space-time}.
\end{proof}

A first consequence of Theorem~\ref{T:decoupling} is plain convergence: the error converges to zero as the mesh-size converges to zero, both in space and time. This holds true irrespective of the regularity of the solution. Moreover, first-order convergence can be established under additional regularity assumptions.

\begin{corollary}[First-order convergence]
\label{C:first-order-convergence}
Let all assumptions in Theorem~\ref{T:well-posedness-Lagrange} be verified. Denote by \(y_1 = (u, p_\Tot, p, m) \in\overline{\bbY}_1\) and $Y_1 \in \bbY_{1, \sfs\sft}$, respectively, the solutions of \eqref{E:BiotProblem-weak-formulation} and \eqref{E:BiotProblem-discretization} with \eqref{E:data-discrete}. Assume additionally
\begin{equation}
\label{E:first-order-convergence-assumptions}
\begin{array}{ll}
u\in H^1(\bbU)\cap L^2(H^2(\Domain)^\Dim) \qquad
&
p_\Tot \in H^1(\bbD) \cap L^2(H^1(\Domain))
\\[2pt]
p\in H^1(\overline \bbP) \cap L^2(H^1(\Domain))
&
(\partial_tm+\calL p)\in H^1(\bbP^*)\cap L^2(L^2(\Domain))
\\[2pt]
m(0)\in L^2(\Domain). &
	\end{array}
\end{equation}
	Then, the error can be bounded from above as follows
	\begin{equation*}
	\label{E:first-order-convergence-estimate}
	\begin{split}
	&\Err{y_1}{Y_1}^2
	\lesssim
	\Big(\max \limits_{\Domain} \sfh \Big )^2 \Bigg \lbrace \dfrac{1}{\kappa} \|m(0)\|_{\Domain}^2 \\
	& \hspace{5pt} +
	\int_0^T  \left( 
	\mu \| \nabla^2 u\|_{\Domain}^2 + 
	\dfrac{1}{\mu} \|\nabla p_\Tot\|_{\Domain}^2  + 
	\dfrac{1}{\varepsilon_\sfs\gamma} \|\nabla p\|_{\Domain}^2  + 
	\dfrac{1}{\kappa} \|\partial_t m+\calL p\|_{\Domain}^2 \right)\dt \Bigg \rbrace\\
	& \hspace{5pt} + \Big ( \max_{j = 1,\dots, J} |I_j| \Big)^2
	\int_0^T \left( 
	\|\partial_t  u\|_{\bbU}^2 + 
	\dfrac{1}{\mu} \|\partial_t p_\Tot\|^2_\Domain +
	\dfrac{1}{\varepsilon_\sfs\gamma} \|\partial_t p\|^2_{\Domain} +
	 \|\partial_t(\partial_t m+\calL p)\|_{\bbP^*}^2
	\right) \dt.
	\end{split}
	\end{equation*}
	The hidden constant depends on the quantities mentioned in Remark~\ref{R:hidden-constants} and on the constant in \eqref{E:interpolation-time-shape-constant} and $\varepsilon_\sfs$ is defined in \eqref{E:discrete-elliptic-regularity} below. 
\end{corollary}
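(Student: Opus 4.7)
The plan is to combine Theorem~\ref{T:decoupling} with standard approximation estimates, by constructing an admissible candidate for each of the five decoupled infima on the right-hand side of~\eqref{E:decoupling}. In each case, the candidate is built as the composition of an appropriate spatial projection or Lagrange interpolant with the piecewise-constant-in-time interpolant $\Pi_\sft^0$ given by the mean value on each $I_j$. A triangle inequality will then split the error into a space contribution, controlled by $\max_\Domain \sfh$, and a time contribution, controlled by $\max_j|I_j|$.

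For the $u$-term, take $\widehat U := \Pi_\sft^0(\Pi_\sfs u)$ with $\Pi_\sfs$ the Lagrange interpolant in $\bbU_\sfs$; boundary compatibility is ensured by the construction \eqref{E:concrete-spaces-displacement}. Writing $\|u-\widehat U\|_\bbU \leq \|u-\Pi_\sfs u\|_\bbU + \|\Pi_\sft^0(\Pi_\sfs u-u)+u-\Pi_\sft^0 u\|_\bbU$ and using $\|w\|_\bbU^2\lesssim \mu\|\nabla w\|_\Domain^2$ together with the standard estimate $\|u-\Pi_\sfs u\|_\bbU \lesssim \sqrt\mu\,\sfh\,\|\nabla^2 u\|_\Domain$ in space and $\|v-\Pi_\sft^0 v\|_{L^2(I_j;X)}\lesssim |I_j|\,\|\partial_t v\|_{L^2(I_j;X)}$ in time gives the $\mu\sfh^2\|\nabla^2u\|^2$ and $|I_j|^2\|\partial_t u\|_\bbU^2$ contributions. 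For the $p_\Tot$-term, since the infimum is over $\bbS_\sft^0(\bbD)$ without a spatial constraint, it suffices to choose $\widehat P_\Tot := \Pi_\sft^0 p_\Tot$ and only a time contribution $|I_j|^2\|\partial_t p_\Tot\|_\Domain^2/\mu$ arises. For the $p$-term, choose $\widehat P := \Pi_\sft^0(\Pi_\sfs p)$ with $\Pi_\sfs$ the $H^1$-conforming Lagrange interpolant in $\bbP_\sfs$ and use $\|p-\Pi_\sfs p\|_\Domain\lesssim \sfh\|\nabla p\|_\Domain$ to obtain the $\sfh^2\|\nabla p\|^2/(\varepsilon_\sfs\gamma)$ and $|I_j|^2\|\partial_t p\|_\Domain^2/(\varepsilon_\sfs\gamma)$ contributions.

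For the two $\bbP^*$-norm terms the main subtlety is to convert a $\bbP^*$-error into a weighted $L^2$-error, which is where the factor $1/\kappa$ comes from. Let $\Pi_\sfs^{L^2}$ denote the $L^2(\Domain)$-projection onto $\bbP_\sfs$; for any $q\in L^2(\Domain)$ and any $n\in\bbP$, we have by the definition of $\calP_{\bbP_\sfs}^*$ and by orthogonality
\begin{equation*}
\langle q-\calP_{\bbP_\sfs}^*\Pi_\sfs^{L^2}q,n\rangle_\bbP
=(q-\Pi_\sfs^{L^2}q,\,n-\Pi_\sfs^{L^2}n)_\Domain
\leq \|q\|_\Domain\,\|n-\Pi_\sfs^{L^2}n\|_\Domain
\lesssim \sfh\,\|q\|_\Domain\,\|\nabla n\|_\Domain,
\end{equation*}
and since $\|n\|_\bbP^2=\kappa\|\nabla n\|_\Domain^2$ (up to the boundary treatment fixed by \eqref{E:abstract-spaces-pressure}) this yields $\|q-\calP_{\bbP_\sfs}^*\Pi_\sfs^{L^2}q\|_{\bbP^*}\lesssim \sfh\,\|q\|_\Domain/\sqrt\kappa$. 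Applying this pointwise in time with $q=\partial_t m+\calL p$ and then adding a piecewise-constant-in-time interpolant controls the third infimum by $\sfh^2\|\partial_t m+\calL p\|_\Domain^2/\kappa$ plus $|I_j|^2\|\partial_t(\partial_t m+\calL p)\|_{\bbP^*}^2$; applied to $q=m(0)$ it controls the initial-value infimum by $\sfh^2\|m(0)\|_\Domain^2/\kappa$.

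Summing the five contributions reproduces the asserted bound. The main technical point is the duality estimate in the previous paragraph, which forces us to use the $L^2$-projection (rather than the Lagrange interpolant) in order to exploit Galerkin orthogonality and to correctly produce the $1/\kappa$ weight; the rest is bookkeeping of scalings and standard one-dimensional interpolation on the intervals $I_j$ used componentwise in the Bochner setting.
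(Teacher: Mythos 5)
Your plan — combine the decoupling estimate with Bramble--Hilbert-type interpolation bounds for each of the five infima — is exactly the paper's intended route, which the paper compresses to a single sentence. You have filled in the details correctly. In particular, the duality step $\langle q-\calP_{\bbP_\sfs}^*\calP_{\bbP_\sfs}q,n\rangle_\bbP=(q,n-\calP_{\bbP_\sfs}n)_\Domain$ followed by $\|n-\calP_{\bbP_\sfs}n\|_\Domain\lesssim\sfh\|\nabla n\|_\Domain$ and $\|n\|_\bbP=\sqrt{\kappa}\|\nabla n\|_\Domain$ is the right way to produce the $1/\kappa$ weight, and the time-splitting with the piecewise-constant mean is standard. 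Two comments.

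First, a small logical omission: bounding the right-hand side of \eqref{E:decoupling} by the asserted quantities controls $\inf_{\widetilde Y_1}\Err{y_1}{\widetilde Y_1}$, not $\Err{y_1}{Y_1}$; you must explicitly invoke Theorem~\ref{T:quasi-optimality} to pass from the actual discrete error to the best error. The paper's one-line proof also invokes this, and it is the indispensable first step.

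Second, on the $p_\Tot$-term you observe that the infimum in \eqref{E:decoupling} is taken over $\bbS^0_\sft(\bbD)$ (no spatial constraint) and conclude that only a time contribution is needed. Note that this is inconsistent with the corollary's right-hand side, which carries the spatial term $\sfh^2\mu^{-1}\|\nabla p_\Tot\|_\Domain^2$; and it is also inconsistent with the paper's own proof of Theorem~\ref{T:decoupling}, where the bound for the term \emph{(ii)} is established with the infimum over $\bbS^0_\sft(\bbD_\sfs)$. The statement of \eqref{E:decoupling} apparently contains a typo ($\bbD$ should read $\bbD_\sfs$). If you work with the discrete space, your construction $\widehat P_\Tot:=\Pi_\sft^0(\Pi_\sfs p_\Tot)$ with $\Pi_\sfs$ a projection onto $\bbD_\sfs$ recovers the missing $\sfh^2\mu^{-1}\|\nabla p_\Tot\|_\Domain^2$ contribution. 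Your argument as written would prove a formally stronger bound than the corollary, but only because it relies on the typo; when writing this out you should flag the discrepancy rather than silently reproducing it. There is also a sign slip in your displayed triangle inequality for the $u$-term (the identity should read $u-\widehat U=(u-\Pi_\sfs u)-\Pi_\sft^0(u-\Pi_\sfs u)+(u-\Pi_\sft^0 u)$), but the intended split is clear and the conclusion is unaffected.
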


\begin{proof}
	Combine Theorem~\ref{T:quasi-optimality} with Theorem~\ref{T:decoupling}. Then, use standard
	Bramble-Hilbert-like estimates (see, e.g., \cite[Chapter
	7]{Tantardini:14}) in order to bound each term on the right-hand side of \eqref{E:decoupling}.
\end{proof}

\begin{remark}[Regularity in space]
\label{R:regularity}
According to \cite[Theorem~5.4]{Kreuzer.Zanotti:23+}, the
solution $y_1$ of \eqref{E:BiotProblem-weak-formulation} satisfies the
space regularity in~\eqref{E:first-order-convergence-assumptions} at least in some circumstances. To
be more precise, we have
\begin{equation*}
\begin{array}{ll}
  u\in L^2(H^2(\Domain)^2)
  \qquad 
  &p_\Tot\in L^2(H^1(\Domain))\\
  p\in L^2(H^1(\Domain))
  &(\partial_tm+\calL p)\in
  L^2(L^2(\Domain))
\end{array}
\end{equation*}
when the data are such that
\begin{equation*}
\label{E:regularity}
\ell_u \in L^2(L^2(\Domain)^2),
\qquad
\ell_p \in L^2(L^2(\Domain)),
\qquad
\ell_0 \in L^2(\Domain),
\end{equation*}
upon additionally assuming that $\Domain \subseteq \R^2$ is convex, the boundary conditions \eqref{E:BiotProblem-boundary-conditions} are posed on
\(\Gamma_{u,E} = \partial\Domain = \Gamma_{p,N}\), and $\lambda \gtrsim \mu$.
\end{remark}

\begin{remark}[Regularity in time]\label{R:regularity-shift-time}
According to~\cite{Murad.Thomee.Loula:96}, the time regularity
in~\eqref{E:first-order-convergence-assumptions} may fail to hold even
for smooth data. For a rough explanation, assume the data are smooth
in time, the solution enjoys the time regularity
in~\eqref{E:first-order-convergence-assumptions} and, in addition, we
have $m \in C^1(\bbP^*)$, i.e. we have one more time derivative than
in Theorem~\ref{T:well-posedness-weak-formulation}. Then, on the
one-hand, the fourth equation in
\eqref{E:BiotProblem-weak-formulation} implies $p \in C^0(\bbP)$. On
the other hand, the evaluation of the other equations at $t = 0$
implies that the pair $(u(0), p(0))$ solves the Stokes-like problem  
\begin{equation*}
\label{E:regularity-shift-time}
\begin{alignedat}{2}
(\calE + \lambda \calD^*\calD) u(0)
+\alpha \calD^* \calP_{\bbD}p(0) &= f_u(0) \quad && \text{in $\bbU^*$}
\\
\alpha\calP_{\overline \bbP}\Div u(0)+\sigma p(0) &= \ell_0 && \text{in $\overline \bbP$}.
\end{alignedat}
\end{equation*}
In general, the solution of this problem is in $\bbU \times \overline \bbP$. Hence the condition $p(0) \in \bbP$ can hold true only for compatible data, otherwise some singularity must be expected at the initial time. We refer to the Terzaghi test case discussed in section \ref{S:numerics} below for a 
numerical illustration.
\end{remark}

\begin{remark}[Grading of the time partition]\label{R:LagrangeIt}
The constant in Theorem~\ref{T:decoupling} depends on the one in
\eqref{E:interpolation-time-shape-constant}, which measures the
grading of the partition employed for the discretization in time. The
latter constants enters into play because Theorem~\ref{T:decoupling}
does not assume additional regularity in time of the solution. This
calls for a Scott-Zhang-like interpolation in time, see
section~\ref{SS:interpolation-time} below for the details. When all components of the solution are continuous in time, a Lagrange-like interpolation is possible and the constant in
\eqref{E:interpolation-time-shape-constant} does not enter into the
error estimation, cf. \cite[Theorem 
4.5]{Tantardini:14}. Still, according to
Remark~\ref{R:regularity-shift-time}, the continuity at $t=0$ is not
obvious.  
\end{remark}

\begin{remark}[Decay rate]
\label{R:decay-rate}
The space discretization considered in this section is actually of
higher-order, because we make use of $H^1$-conforming Lagrange finite
elements of degree $\sfk+1$ for the displacement and of degree $\sfk$
for the other components of the solution with $\sfk \geq 1$,
cf. \eqref{E:concrete-spaces-displacement}-\eqref{E:concrete-spaces-pressure}. Therefore,
the question arises if a higher-order decay rate with respect to
$\sfh$ can be obtained under higher regularity assumptions on the
solution. On the one hand,
the $\bbP^*$-norm errors on the right-hand side of \eqref{E:decoupling} appear to be critical in this respect, because the space $\bbP_\sfs $, possibly incorporating boundary conditions, is
used to approximate functional from $\bbP^*$, which have no prescribed
boundary conditions. On the other hand, it is unclear to us if the above-mentioned regularity of the solution can be expected in general. Indeed, the regularity result mentioned in Remark 4.9,
assumes $\Gamma_{p,E} = \emptyset$. Due to the subtlety of this matter, we postpone any further investigation on this point to future work.
\end{remark}

The remaining part of this section is devoted to the proof of
Theorem~\ref{T:decoupling}. For this purpose, we aim at constructing a
bounded interpolant $\calI: \overline \bbY_{1} \to \bbY_{1,
  \sfs\sft}$. The operator $\calI$ cannot be obtained by just
approximating each component of the solution irrespective of the
others, because of the nontrivial coupling of the components in 
the error notion~\eqref{E:error-notion}. Thus, to make sure that $\calI$
is bounded and robust with respect to the material parameters, we must
guarantee that it is compatible with the coupling. Roughly speaking
this means that, when the error notion involves some combination of
the components of the solution, it must be possible to accurately
approximate it by the corresponding combination of the components of
the interpolant. We achieve this goal with the help of a number of
commutative diagrams, cf. Figures~\ref{F:Time-Operator-Diagram} and
\ref{F:Space-Operator-Diagram}. 

We divide our construction into three parts. In
Section~\ref{SS:interpolation-time} we first discuss the interpolation
in time. In Section \ref{SS:interpolation-space} we discuss the
interpolation in space. Finally, in
Section~\ref{SS:interpolation-space-time}, we combine the
interpolation in time and space in order to prove
Theorem~\ref{T:decoupling}. 

\subsection{Time interpolation}
\label{SS:interpolation-time}
The error notion $\Err{\cdot}{\cdot}$ involves several terms with
$L^2$ regularity in time, so we initially consider the problem
of approximating function from $L^2(\bbX)$ into $\bbS^0_\sft (\bbX)$
for some Hilber space $\bbX$. Since we cannot control the point values
of a function in $L^2(\bbX)$, we cannot use Lagrange
interpolation; cf. remark~\ref{R:LagrangeIt}. Thus, we resort to Scott-Zhang-like
\cite{Scott.Zhang:90} interpolation in the vein of \cite[Section~4]{Tantardini:14}. 

Recall \eqref{E:mesh-time}-\eqref{E:mesh-time-intervals}. For
\(j\in\{1,\ldots,J\}\), the polynomial $\psi_j \in \Poly{1}(I_j)$
defined as 
\begin{equation}
\label{E:definition-psi}
\psi_j(t):=6\frac{t-t_{j-1}}{|I_j|^2}-\frac{2}{|I_j|}
\end{equation}
is such that, 
\begin{equation}\label{eq:property_psi}
\int_{I_j} Q\psi_j  \dt=Q(t_j) \qquad \forall Q \in \Poly{1}(I_j).
\end{equation}
We define $\calJ : L^2(\bbX)\to\bbS^0_\sft (\bbX)$ by 
\begin{align*}
  (\calJ  x)_{|I_j}:=\int_{I_j}x \psi_j\dt
\end{align*}
for $x \in L^2(\bbX)$ and $j=1,\dots J$. 

Since the error notion in \eqref{E:error-notion} involves the discrete time derivative $\mathrm{d}_\sft $, it is useful to introduce another interpolant $\widetilde \calJ : L^2(\bbX) \to \bbS^0_\sft (\bbX)$, defined as  
\begin{equation*}
(\widetilde{\calJ}  x)_{|I_j}
:= 
\dfrac{ \displaystyle \int_{I_{j}}\left( \int_{t_{j-1}}^t x(s)\,\textrm{d}s\right) \psi_{j}(t)\,\textrm{d}t
+
\int_{I_{j-1}}\left( \int_{t}^{t_{j-1}}x(s) \,\textrm{d}s\right) \psi_{j-1}(t)\,\textrm{d}t }{|I_{j}|}
\end{equation*}
for $j=2,\dots J$ and
\begin{equation*}
(\widetilde{\calJ}  x)_{|I_1}:= \dfrac{\displaystyle \int_{I_1}\left( \int_{0}^tx(s)\textrm{d}s\right) \psi_{1}(t)\dt}{|I_1|}.
\end{equation*}
The first part of Lemma~\ref{L:interpolation-time} below ensures that both $\calJ x$ and $\widetilde{\calJ}x$ are near best approximations of $x$ in the $L^2(\bbX)$-norm. The second part additionally clarifies that the two interpolants are related, through the weak and the discrete time derivative, as summarized by the commuting diagram in Figure~\ref{F:Time-Operator-Diagram}.  

\begin{lemma}[Time interpolation]
\label{L:interpolation-time}
The operators $\calJ $ and $\widetilde{\calJ} $ defined above are such that 
	\begin{subequations}
		\label{E:interpolation-time}
		\begin{align}
		\label{E:interpolation-time-1st-interpolant}
		\int_0^T \Norm{x - \calJ  x}_{\bbX}^2\dt 
		&\leq 4
		\inf_{X \in \bbS^0_\sft(\bbX) }\int_0^T\Norm{x-X}_{\bbX}^2 \dt
		\\
		\label{E:interpolation-time-2nd-interpolant}
		\int_0^T \Norm{x - \widetilde{\calJ}  x}_{\bbX}^2\dt &\lesssim
		\inf_{X \in \bbS^0_\sft(\bbX) }\int_0^T\Norm{x-X}_{\bbX}^2\dt 
		\end{align}
		for all $x \in L^2(\bbX)$. Moreover, for $x \in H^1(\bbX)$, we have
		\begin{equation}
		\label{E:interpolation-time-commutative}
		\mathrm{d}_\sft (\calJ x, x(0)) = \widetilde{\calJ}  \partial_t x.
		\end{equation}
	\end{subequations}
		The hidden constant in \eqref{E:interpolation-time-2nd-interpolant} is an increasing function of 
		\begin{equation}
		\label{E:interpolation-time-shape-constant}
		\max_{j = 2,\dots, J} \dfrac{|I_{j-1}|}{|I_j|}.
		\end{equation}
\end{lemma}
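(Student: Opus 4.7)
The plan is to address the three parts in sequence, relying on two dual antiderivatives of $\psi_j$ that turn out to be the common object behind both the $L^2$-stability of $\widetilde{\calJ}$ and the commutation identity. For the stability of $\calJ$ I first compute $\int_{I_j}\psi_j^2\,\dt = 4/|I_j|$ directly from \eqref{E:definition-psi}; Cauchy--Schwarz on each subinterval then gives $\|(\calJ x)_{|I_j}\|^2_{\bbX}\leq (4/|I_j|)\int_{I_j}\|x\|^2_{\bbX}\,\dt$, hence the operator bound $\|\calJ\|_{L^2(\bbX)\to L^2(\bbX)}\leq 2$. Setting $Q\equiv 1$ in \eqref{eq:property_psi} yields $\int_{I_j}\psi_j=1$, so $\calJ$ reproduces piecewise constants and is a projection onto $\bbS^0_\sft(\bbX)$. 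The classical identity $\|I-\calJ\|=\|\calJ\|$ for a nontrivial Hilbert-space projection then gives \eqref{E:interpolation-time-1st-interpolant} by applying $I-\calJ$ to $x-X$ for arbitrary $X\in\bbS^0_\sft(\bbX)$.

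For $\widetilde{\calJ}$ I swap the order of integration to get, for $j\ge 2$,
\begin{equation*}
|I_j|\,(\widetilde{\calJ}x)_{|I_j}
= \int_{I_j} x(s)\,\Psi_j(s)\,\dt + \int_{I_{j-1}} x(s)\,\widetilde\Psi_{j-1}(s)\,\dt,
\end{equation*}
with $\Psi_j(s) := \int_s^{t_j}\psi_j\,\dt$ and $\widetilde\Psi_{j-1}(s) := \int_{t_{j-2}}^s\psi_{j-1}\,\dt$. A scaling to the reference interval shows $\|\Psi_j\|_{L^2(I_j)}^2 \eqsim |I_j|$ and $\|\widetilde\Psi_{j-1}\|_{L^2(I_{j-1})}^2 \eqsim |I_{j-1}|$, so Cauchy--Schwarz gives
\begin{equation*}
|I_j|\,\|(\widetilde{\calJ}x)_{|I_j}\|^2_{\bbX}
\lesssim \int_{I_j}\|x\|^2_{\bbX}\,\dt
+ \frac{|I_{j-1}|}{|I_j|}\int_{I_{j-1}}\|x\|^2_{\bbX}\,\dt;
\end{equation*}
summing in $j$ yields $L^2$-stability with a constant that is increasing in the ratio \eqref{E:interpolation-time-shape-constant}. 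Plugging a piecewise constant $X$ into the Fubini expression and invoking \eqref{eq:property_psi} shows $\widetilde{\calJ}X=X$, so $\widetilde{\calJ}$ is a projection and the same Hilbert-space identity converts the stability bound into \eqref{E:interpolation-time-2nd-interpolant}.

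For the commutation \eqref{E:interpolation-time-commutative} I integrate $\int_{I_k}x\psi_k\,\dt$ by parts using the antiderivative $\Phi_k(t):=\int_{t_{k-1}}^t\psi_k = 1-\Psi_k(t)$; the boundary data $\Phi_k(t_{k-1})=0$, $\Phi_k(t_k)=1$ produce the representation $(\calJ x)_k = x(t_{k-1}) + \int_{I_k}\partial_t x\,\Psi_k\,\dt$. Taking the difference for $k=j$ and $k=j-1$ and using the elementary relation $\Psi_{j-1}+\widetilde\Psi_{j-1}\equiv 1$ to trade $-\Psi_{j-1}$ for $\widetilde\Psi_{j-1}$ (modulo a telescoping $x(t_{j-1})-x(t_{j-2})$), all boundary values cancel, leaving exactly $|I_j|\,(\widetilde{\calJ}\partial_t x)_{|I_j}$. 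The case $j=1$ is analogous, with the convention $(\calJ x)_0 := x(0)$ matching the first interval in both $\mathrm{d}_\sft$ and $\widetilde{\calJ}$. The main obstacle is bookkeeping rather than conceptual: one has to identify $\Psi_j$ and $\widetilde\Psi_{j-1}$ as the common weights appearing in both the Fubini representation of $\widetilde{\calJ}$ and the integration-by-parts representation of the jumps of $\calJ x$; once this is observed, everything reduces to routine calculus plus the classical projection identity.
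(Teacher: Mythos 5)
Your proof is correct and, while it follows the same overall structure as the paper's (stability plus invariance for each interpolant, then a calculus identity for the commutation), it reorganizes the argument for $\widetilde{\calJ}$ in a genuinely nicer way. Where the paper estimates $\bigl\|\int_{t_{j-1}}^{t}x\,\mathrm{d}s\bigr\|$ crudely by $\int_{I_j}\|x\|$ before applying Cauchy--Schwarz separately to the $x$- and $\psi_j$-integrals, you first apply Fubini to rewrite each piece of $(\widetilde{\calJ}x)_{|I_j}$ as a single integral of $x$ against the fixed reference polynomials $\Psi_j$ and $\widetilde\Psi_{j-1}$; the scaling $\|\Psi_j\|_{L^2(I_j)}^2\eqsim|I_j|$ then gives stability in one step. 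This has the welcome side effect that the \emph{same} weights $\Psi_j$, $\widetilde\Psi_{j-1}$ reappear when you integrate $(\calJ x)_k=\int_{I_k}x\,\psi_k$ by parts, so the commutation identity \eqref{E:interpolation-time-commutative} becomes transparent via $\Psi_{j-1}+\widetilde\Psi_{j-1}\equiv 1$, rather than a separate calculation. One more small difference: the paper obtains the constant $4$ in \eqref{E:interpolation-time-1st-interpolant} by citing Tantardini, whereas you derive the operator norm $\|\calJ\|\leq 2$ explicitly and then invoke Kato's identity $\|I-P\|=\|P\|$ for a nontrivial bounded idempotent on a Hilbert space; this is the right tool and gives exactly the stated constant, since the triangle inequality alone would only yield $9$. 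The pieces all check out: the Fubini swaps are correct, $\int_{I_j}\Psi_j=|I_j|$ and $\int_{I_{j-1}}\widetilde\Psi_{j-1}=0$ confirm invariance on $\bbS^0_\sft(\bbX)$, and the $j=1$ case is handled consistently with the paper's convention in \eqref{E:discrete-time-derivative}.
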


\begin{proof}
According to
\cite[Remark~4.1]{Tantardini:14}, the operator $\calJ $ is invariant
on $\bbS^0_\sft(\bbX) $ and it is bounded with 
\begin{equation*}
\int_{0}^{T}\Norm{\calJ  x}_{\bbX}^2\dt 
\leq 4
\int_0^T \Norm{x}_{\bbX}^2\dt
\end{equation*}	
for all $x \in L^2(\bbX)$. The combination of the two properties
implies \eqref{E:interpolation-time-1st-interpolant}.  

Invariance of \(\widetilde{\calJ} \) on \(\bbS^0_\sft (\bbX)\) follows by elementary
 calculations employing~\eqref{eq:property_psi}.
In order to prove stability for \(\widetilde{\calJ} \), we observe for $x \in L^2(\bbX)$ and \(j\ge2\) that
\begin{align*}\label{eq:esttJx}
   & \|(\widetilde{\calJ}  x)_{|I_{j}}\|_\bbX
   \le
  \dfrac{1}{|I_j|}\displaystyle\int_{I_{j}}\|
   x\|_\bbX\int_{I_{j}} |\psi_{j}|
   +
   \dfrac{1}{|I_j|} \int_{I_{j-1}} \|x\|_\bbX \int_{I_{j-1}}|\psi_{j-1}|\\
   &\quad \le 
   \left( \int_{I_j} \Norm{x}^2_\bbX\right)^{\frac{1}{2}} 
   \left( \int_{I_j} |\psi_j|^2\right)^{\frac{1}{2}}
   +
   \dfrac{|I_{j-1}|}{|I_j|} \left( \int_{I_{j-1}} \Norm{x}^2_\bbX\right)^{\frac{1}{2}}
   \left( \int_{I_{j-1}} |\psi_{j-1}|^2\right)^{\frac{1}{2}}.
\end{align*}
By recalling \eqref{E:definition-psi}-\eqref{eq:property_psi}, we arrive at 
\begin{align*}
   \int_{I_j}\|\widetilde{\calJ}  x\|_\bbX^2\le
   4 \left(1 + \dfrac{|I_{j-1}|}{|I_j|}\right)  
   \int_{I_{j-1}\cup I_j}\|x\|_\bbX^2.
\end{align*}
The same argument applies for \(j=1\). Summing over \(j=1,\ldots,J\) proves \eqref{E:interpolation-time-2nd-interpolant}.
   
Finally \eqref{E:interpolation-time-commutative} follows from the fundamental theorem of calculus and \eqref{eq:property_psi}.
\end{proof}

\begin{figure}[ht]
	\[
	\xymatrixcolsep{4pc}
	\xymatrixrowsep{4pc}
	\xymatrix{
		H^1(\bbX)
		\ar[d]^{(\:\calJ ,\;(\cdot)(0) \:)}
		\ar[r]^{\partial_t}
		& L^2(\bbX)
		\ar[d]^{\widetilde{\calJ} } \\
		\bbS^0_\sft (\bbX)\times \bbX
		\ar[r]^{\mathrm{d}_\sft }
		& \bbS^0_\sft (\bbX)}
	\]
	\caption{\label{F:Time-Operator-Diagram} Commutative diagram representing the relation between the time interpolants. The second component in the operator $(\,\calJ ,\;(\cdot)(0) \,)$ on the left is the evaluation at $t=0$.}
\end{figure}

\subsection{Space interpolation}
\label{SS:interpolation-space}
Regarding the approximation in space, the error notion
$\Err{\cdot}{\cdot}$ leads to the problem of interpolating functions
from $\bbU$ into $\bbU_\sfs $, from $\bbD$ into $\bbD_\sfs$ as well as
from $\overline{\bbP}$ and $\bbP^*$ into $\bbP_\sfs $. Moreover, the
spaces are related via the operators $\calD$ and $\calL$ and their
discrete counterparts and the error notion involves various
$L^2(\Domain)$-orthogonal projections. Therefore, commutative diagrams
like the one in Figure~\ref{F:Time-Operator-Diagram} are of interest
also in this context.  

In order to deal with all these tasks, we invoke the existence of an
operator which maps $H^1$-conforming piecewise polynomials into
$H^2$-conforming ones and enjoys several other properties, whose
importance is made clear along the proof of the next results in this
section. The operator is defined on the Lagrange space of degree
$\sfk$ without boundary conditions, namely 
\begin{equation*}
\label{E:Lagrange-space}
\Lagr{\sfk}{1} := \Poly{\sfk}(\Mesh) \cap H^1(\Domain).
\end{equation*}
Moreover, we denote the jumps and the averages across faces by the usual symbols $\Avg{\cdot}$ and $\Jump{\cdot}$, cf. \cite[Section~38.2.1]{Ern.Guermond:21b}.

\begin{lemma}[Smoothing operator]
\label{L:smoothing}
There is a linear operator \(\calS: \Lagr{\sfk}{1}\to H^2(\Domain)\) which satisfies the following properties
\begin{subequations}
\label{E:smoothing}
    \begin{align}
     \label{E:smoothing-inclusion}
     &\calS(\bbP_\sfs ) \subseteq \bbP &&
     \\
     \label{E:smoothing-normal}
     &\nabla \calS Q\cdot\Normal = 0 &&\text{on}~\Gamma_{p,N} 
     \\
     \label{E:smoothing-elements}
     &\int_{\sfT}(\calS Q)N = \int_{\sfT} QN &&\forall N\in\Poly{\sfk}(\sfT),\; \sfT\in\Mesh
     \\
     \label{E:smoothing-faces}
     &\int_{\sfF}(\calS Q)N = \int_{\sfF} 
     QN &&\forall N\in\Poly{k-1}(\sfF),\; F\in\Faces^i\cup\Faces_{{p,N}}
    \end{align}
  as well as the stability estimate
  \begin{align}
  \label{E:smoothing-stability}
    \|D^j(Q-\calS Q)\|_{\sfT}^2\lesssim \sum_{\sfF\in
    \Faces^i\cup\Faces_{{p,N}}, \; \sfF\cap \sfT\neq\emptyset} \int_\sfF \Avg{\sfh}^{3-2j}
    |\Jump{\nabla Q} \cdot \Normal|^2
  \end{align}
\end{subequations}
for all $Q \in \Lagr{\sfk}{1}$ and $j \in \{0,1,2\}$. The hidden constant depends only on the quantities mentioned in Remark~\ref{R:hidden-constants}.
\end{lemma}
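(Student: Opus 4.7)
\medskip

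\noindent\textbf{Proof plan.} The plan is to construct $\calS$ via an averaging-and-enrichment procedure mapping into an auxiliary $H^2$-conforming finite element space $W \subseteq H^2(\Domain)$, built on the mesh $\Mesh$ (after a macroelement refinement if necessary) out of piecewise polynomials of sufficiently high degree. The space $W$ will be equipped with a unisolvent set of nodal degrees of freedom comprising: point evaluations of the function and of its gradient at each vertex; weighted moments of the normal derivative on each face; and moments of the function against $\Poly{\sfk}$ on each element and against $\Poly{\sfk-1}$ on each face, realized through bubble functions that vanish together with their normal derivatives on all lower-dimensional entities. Classical examples are the quintic Argyris element for $\Dim=2$ and analogous $C^1$ macroelements for $\Dim=3$.

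With $W$ at hand, $\calS Q$ is defined by prescribing each of its degrees of freedom as follows. For the bubble-type element and face moments, simply copy the corresponding moments of $Q$; this delivers \eqref{E:smoothing-elements} and \eqref{E:smoothing-faces} by construction. For each vertex value and each derivative-type degree of freedom at an interior vertex or across an interior face, average the evaluations of $Q_{|\sfT}$ (or of $\Grad Q_{|\sfT} \cdot \Normal$) over the adjacent elements, in the spirit of Scott--Zhang. To guarantee \eqref{E:smoothing-inclusion}, set to zero every degree of freedom whose support lies on $\Gamma_{p,E}$; analogously, set to zero every normal-derivative degree of freedom on a face contained in $\Gamma_{p,N}$ to obtain \eqref{E:smoothing-normal}. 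If $\bbP$ incorporates a vanishing-mean constraint, a final subtraction of the mean delivers the required inclusion into $\bbP$.

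For the stability bound \eqref{E:smoothing-stability}, I would argue element by element. On a given $\sfT \in \Mesh$, expand $(\calS Q - Q)_{|\sfT} = \sum_i c_i \varphi_i$ in the local nodal basis of $W$. By construction the coefficients associated with the moment-preserving bubbles vanish; those associated with vertex function values vanish because $Q \in H^1(\Domain)$ is (nodally) continuous. Every remaining $c_i$ corresponds to a derivative-type degree of freedom, and equals, up to a combinatorial constant, an average of values of $\Jump{\Grad Q \cdot \Normal}$ sampled on a face $\sfF \in \Faces^i \cup \Faces_{p,N}$ adjacent to $\sfT$. A standard scaling argument yields the bound $\|D^j \varphi_i\|_\sfT^2 \lesssim \sfh_\sfT^{\Dim - 2 j}$, while a discrete trace inequality converts each squared coefficient into $\sfh_\sfF^{1-\Dim}\|\Jump{\Grad Q\cdot\Normal}\|_\sfF^2$; combining these and summing produces the weight $\Avg{\sfh}^{3-2j}$ on each face and hence \eqref{E:smoothing-stability}.

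The principal obstacle is the design of $W$ so that the moment-preserving bubbles required for \eqref{E:smoothing-elements}--\eqref{E:smoothing-faces} are compatible with the vertex and face degrees of freedom enforcing $H^2$-conformity; the bubbles must vanish together with their normal derivatives on all lower-dimensional mesh entities, and their existence in the chosen polynomial space requires the degree to be large enough, possibly through a macroelement refinement in $\Dim = 3$. Smoothing operators of this kind have been constructed in several related settings, notably in the analysis of $C^0$ interior penalty methods for fourth-order problems and in the context of commuting quasi-interpolations, and I would rely on one of those constructions to supply $W$ together with a nodal basis having the required orthogonalization properties.
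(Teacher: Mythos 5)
Your proposal adopts the same core idea as the paper---smooth by projecting into an $H^2$-conforming auxiliary space whose degrees of freedom encode both $C^1$-continuity and the required element/face moments---but it packages the construction differently. The paper composes two operators: a \emph{simplified averaging} $\calS_1$ into the classical HCT space (handling \eqref{E:smoothing-inclusion}, \eqref{E:smoothing-normal}, \eqref{E:smoothing-stability}) and a \emph{bubble correction} $\calS_2$ (handling \eqref{E:smoothing-elements}--\eqref{E:smoothing-faces}), then sets $\calS = \calS_1 + \calS_2(I - \calS_1)$. You instead propose a single enriched $C^1$ space $W$ carrying a unisolvent nodal set that simultaneously includes vertex values and gradients, face normal-derivative moments, and the element/face moments, then defining $\calS$ DOF-by-DOF. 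Both routes are viable; the two-step composition is more modular in that it only needs an off-the-shelf $C^1$ element plus separately constructed bubbles, and need not prove unisolvency of a new enlarged DOF set. You correctly flag the design of $W$ as the principal outstanding gap in your argument, and it is: proving that the HCT/Argyris conformity DOFs together with the $\Poly{\sfk}$/$\Poly{\sfk-1}$ moment DOFs form a unisolvent set on a fixed polynomial (macro)element is not a small step, and deferring it wholesale to the literature leaves the construction open.

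Two additional slips are worth correcting. First, in the stability argument your scaling $\|D^j\varphi_i\|_\sfT^2 \lesssim \sfh_\sfT^{\Dim-2j}$ is the scaling for a \emph{value}-type nodal basis function; the remaining DOFs after your cancellations are first-derivative DOFs, for which the correct scaling is $\|D^j\varphi_i\|_\sfT^2 \lesssim \sfh_\sfT^{\Dim+2-2j}$. Without the extra factor $\sfh_\sfT^2$ your combination with $|c_i|^2\lesssim \sfh_\sfF^{1-\Dim}\|\Jump{\Grad Q}\cdot\Normal\|_\sfF^2$ yields $\sfh^{1-2j}$ rather than the claimed $\sfh^{3-2j}$. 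Second, the proposed ``final subtraction of the mean'' to enforce $\calS(\bbP_\sfs)\subseteq\bbP$ in the $L^2_0$ case is both unnecessary and harmful: unnecessary because \eqref{E:smoothing-elements} with $N=1$ already gives $\int_\Omega\calS Q=\int_\Omega Q$, hence $\calS Q\in L^2_0(\Domain)$ whenever $Q\in L^2_0(\Domain)$; harmful because subtracting a nonzero constant destroys \eqref{E:smoothing-elements} for $N\in\Poly{\sfk}(\sfT)$.
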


\begin{proof}
See Appendix~\ref{S:proof-smoothing}.
\end{proof}

Having the operator $\mathcal{S}$ at hand, we define $\calI_{\Domain}:L^2(\Domain)\to \Lagr{\sfk}{1}$ via the problem
\begin{subequations}
\label{E:interpolation-space}
	\begin{equation}
	\label{E:interpolation-space-H}
	( \calI_{\Domain}\widetilde p_\Tot, Q )_\Domain
	= ( \widetilde p_\Tot,\calS
	Q )_\Domain \qquad\forall Q \in\Lagr{\sfk}{1}
	\end{equation}
	for $\widetilde{p}_\Tot \in L^2(\Domain)$. Analogously, we
        define  $\calI_{\bbP^*}:\bbP^*\to\bbP_\sfs $ via the problem 
	\begin{equation}
	\label{E:interpolation-space-W*}
	(
	\calI_{\bbP^*}\widetilde m,N )_{\Domain} = \langle \widetilde
	m,\calS N\rangle_\bbP \qquad\forall N\in\bbP_\sfs 
	\end{equation}
	for $\widetilde{m} \in \bbP^*$. Note that the main difference
        between $\calI_{\Domain}$ and $\calI_{\bbP^*}$ is in the
        range. We introduce also $\calI_\bbU: \bbU \to \bbU_\sfs $ by 
	\begin{equation}
	\label{E:interpolation-space-V}
	\calI_\bbU \widetilde u := \widehat{U} + \mathcal{R}_\sfs (
        \calI_\Domain \calD \widetilde u - \calD_\sfs  \widehat{U} ) 
	\end{equation}
	for $\widetilde u \in \bbU$, where $\widehat{U}$ is the
        $\bbU$-orthogonal projection of $u$ onto $\bbU_\sfs $ and
        $\mathcal{R}_\sfs : \bbD_\sfs \to \bbU_\sfs $ is a right inverse of
        $\calD_\sfs $. The existence and the boundedness of the latter
        operator are equivalent to
        Proposition~\ref{P:verification-assumptions}\eqref{P:va-ii},
        cf. \cite[Lemma~C.42]{Ern.Guermond:21b}. Notice also that we 
        have $\calI_\Domain \calD \widetilde u \in \bbD_\sfs$ in view of
        Lemma~\ref{L:space-interpolation-1}\eqref{P:si1-ii} below.   
\end{subequations}

Let us clarify the logic behind the definitions in
\eqref{E:interpolation-space}. The interpolant $\calI_\Domain$ is
intended for the approximation of the total pressure, $\calI_{\bbP^*}$
for the total fluid content and the pressure, and $\calI_{\bbU}$ for
the displacement. It is important for our purpose that
$\calI_{\bbP^*}$ is $\bbP^*$-stable and this requires the use of
an operator $\calS$ in the right-hand side of
\eqref{E:interpolation-space-W*}, mapping the test functions (at
least) into $\bbP$. (We mention, in passing, that $\calS$ is actually
required to map into even more regular functions, in order to enforce
the $L^2(\Domain)$-stability of the interpolant $\calI_{\bbP}$
introduced below, cf. Lemma~\ref{L:space-interpolation-2}.) In
principle, the use of $\calS$ is not needed in
\eqref{E:interpolation-space-H}, when only stability in the
$L^2(\Domain)$-norm is required. Still, it is important at some point relating
$\calI_\Domain$ and $\calI_{\bbP^*}$ via a commutative diagram,
cf. Figure~\ref{F:Space-Operator-Diagram} below. Therefore, we use
$\calS$ also in \eqref{E:interpolation-space-H}. Finally, the
interpolant $\calI_{\bbU}$ is nothing else than the $\bbU$-orthogonal
projection plus a correction, that is necessary for another
commutative diagram.  

\begin{lemma}[Space interpolation -- Part 1]
\label{L:space-interpolation-1}
The operators $\calI_\Domain$, $\calI_{\bbP^*}$ and $\calI_\bbU$ defined in \eqref{E:interpolation-space} enjoy the following properties.
\begin{enumerate}
	\item \label{P:si1-i} $\calI_\Domain$ maps $\bbD$ into $\bbD_\sfs$ and, for all $\widetilde p_\Tot \in L^2(\Domain)$, we have
	\begin{equation*}
	\Norm{\widetilde p_\Tot - \calI_\Domain \widetilde p_\Tot}_\Domain
	\lesssim
	\inf_{\widehat P_\Tot \in \Lagr{\sfk}{1}} \Norm{\widetilde p_\Tot - \widehat P_\Tot}_\Domain.
	\end{equation*}
	\item \label{P:si1-ii} For all $\widetilde m \in \bbP^*$ and $\widetilde p \in \overline{\bbP}$, we have, respectively,
	\begin{equation*}
	\Norm{\widetilde m - \calP_{\bbP_\sfs }^*\calI_{\bbP^*} \widetilde m}_{\bbP^*}
	\lesssim
	\inf_{\widehat M \in \bbP_\sfs } \Norm{\widetilde m - \calP_{\bbP_\sfs }^*\widehat M}_{\bbP^*}
	\end{equation*}
	and
	\begin{equation*}
	\Norm{\widetilde p - \calI_{\bbP^*} \widetilde p}_{\Domain}
	\lesssim
	\inf_{\widehat P \in \bbP_\sfs } \Norm{\widetilde p - \widehat P}_{\Domain}.
	\end{equation*}
	\item \label{P:si1-iii} For all $\widetilde u \in \bbU $, we have $\calD_\sfs  \calI_\bbU \widetilde u = \calI_\Domain \calD \widetilde u$ as well as  
	\begin{equation*}
	\Norm{\widetilde u - \calI_\bbU \widetilde u}_\bbU
	\lesssim
	\inf_{\widehat U \in \bbU_\sfs } \Norm{\widetilde u - \widehat U}_\bbU.
	\end{equation*}
	\item \label{P:si1-iv} The following identities hold true in $L^2(\Domain)$ 
	\begin{equation*}
	\calP_{\bbD_\sfs } \calI_\Domain
	=
	\calI_\Domain \calP_{\bbD}
	\qquad \text{and} \qquad
	\calP_{\bbP_\sfs } \calI_\Domain
	=
	\calI_{\bbP^*} \calP_{\overline\bbP}.
	\end{equation*}  
\end{enumerate}
The hidden constants depend only on the quantities mentioned in Remark~\ref{R:hidden-constants}.
\end{lemma}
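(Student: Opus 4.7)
The plan is to reduce all four claims to a single algebraic orthogonality derived from Lemma~\ref{L:smoothing} together with two stability estimates for the smoothing operator $\calS$. Property \eqref{E:smoothing-elements} immediately gives
\begin{equation*}
(R, Q - \calS Q)_\Domain = 0 \qquad \forall R \in \Poly{\sfk}(\Mesh),\; Q \in \Lagr{\sfk}{1},
\end{equation*}
and \eqref{E:smoothing-stability} combined with standard inverse estimates on the jump terms yields $\Norm{\calS Q}_\Domain \lesssim \Norm{Q}_\Domain$ and, thanks to \eqref{E:smoothing-inclusion}--\eqref{E:smoothing-normal}, $\Norm{\calS N}_\bbP \lesssim \Norm{N}_\bbP$ for $N \in \bbP_\sfs$. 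Applied to $Q \equiv 1$, estimate \eqref{E:smoothing-stability} also forces $\calS 1 = 1$, which will be the source of every mean-value preservation property needed below.

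\textbf{Parts (i)--(ii).} I would first check that $\calI_\Domain$ preserves the zero-mean constraint that distinguishes $\bbD$ from $L^2$: testing the definition of $\calI_\Domain$ with $Q \equiv 1$ gives $\int_\Domain \calI_\Domain \widetilde p_\Tot = (\widetilde p_\Tot, \calS 1)_\Domain = \int_\Domain \widetilde p_\Tot$. For the near-best estimate in (i), fix $\widehat P_\Tot \in \Lagr{\sfk}{1}$, test with $Q = \calI_\Domain \widetilde p_\Tot - \widehat P_\Tot$, and rewrite the resulting identity as $\Norm{Q}_\Domain^2 = (\widetilde p_\Tot - \widehat P_\Tot, \calS Q)_\Domain + (\widehat P_\Tot, \calS Q - Q)_\Domain$. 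The second summand vanishes by the orthogonality (since $\widehat P_\Tot \in \Poly{\sfk}(\Mesh)$), Cauchy-Schwarz and $L^2$-stability of $\calS$ control the first, and a triangle inequality closes the estimate. The $L^2$ bound in (ii) for $\widetilde p \in \overline{\bbP}$ is identical, since $\langle \widetilde p, \calS N\rangle_\bbP = (\widetilde p, \calS N)_\Domain$ when the first argument lies in the pivot space. For the $\bbP^*$ estimate, I would use \eqref{A:L2-projection} to reduce the claim to $\Norm{\widehat M - \calI_{\bbP^*} \widetilde m}_{\bbP_\sfs^*} \lesssim \Norm{\widetilde m - \calP_{\bbP_\sfs}^* \widehat M}_{\bbP^*}$; testing against $N \in \bbP_\sfs$ and combining $(\widehat M, N - \calS N)_\Domain = 0$ with $\langle \calP_{\bbP_\sfs}^* \widehat M, \calS N\rangle_\bbP = (\widehat M, \calS N)_\Domain$ gives
\begin{equation*}
(\widehat M - \calI_{\bbP^*} \widetilde m, N)_\Domain = \langle \calP_{\bbP_\sfs}^* \widehat M - \widetilde m, \calS N\rangle_\bbP,
\end{equation*}
so the $\bbP$-stability of $\calS$ together with $\Norm{N}_\bbP \eqsim \Norm{N}_{\bbP_\sfs}$ from \eqref{A:extended-norms} finishes the argument.

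\textbf{Parts (iii)--(iv).} The commutation $\calD_\sfs \calI_\bbU \widetilde u = \calI_\Domain \calD \widetilde u$ is immediate from the definition of $\calI_\bbU$ and $\calD_\sfs \mathcal{R}_\sfs = \mathrm{Id}$. The key observation for the near-best bound is that $\calI_\Domain \calD \widehat U = \calD_\sfs \widehat U$ for every $\widehat U \in \bbU_\sfs$: since $\calD \widehat U$ is element-wise in $\Poly{\sfk}(\sfT)$, the orthogonality shows that both $\calI_\Domain \calD \widehat U$ and $\calD_\sfs \widehat U$ satisfy the same equations characterising the $L^2(\Domain)$-projection of $\calD \widehat U$ onto $\bbD_\sfs$. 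Choosing $\widehat U$ as the $\bbU$-orthogonal projection of $\widetilde u$ reduces the task to bounding $\Norm{\mathcal{R}_\sfs \calI_\Domain \calD(\widetilde u - \widehat U)}_\bbU$, where the factor $\sqrt{\mu}$ lost through the bounded right inverse from \eqref{A:Stokes-inf-sup} is exactly compensated by $\Norm{\calD \cdot}_\Domain \lesssim \mu^{-1/2} \Norm{\cdot}_\bbU$. For (iv), the first identity reduces to $\overline{\calI_\Domain x} = \overline{x}$, which again follows from $\calS 1 = 1$; the second, after testing against $N \in \bbP_\sfs$, reduces to checking that $\calS N \in \overline{\bbP}$, which holds in the only nontrivial case $\overline{\bbP} = L^2_0$ because then $\bbP_\sfs \subseteq L^2_0$ and $\int_\Domain \calS N = \int_\Domain N$.

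\textbf{Main obstacle.} The delicate point is to keep every hidden constant independent of the material parameters. This relies on the orthogonality being available on the \emph{unweighted} polynomial space $\Poly{\sfk}(\Mesh)$, and on the precise $\sqrt{\mu}$ cancellation in part (iii). Everything else goes through the standard $L^2$- and $\bbP$-stability of $\calS$, with constants depending only on the quantities listed in Remark~\ref{R:hidden-constants}.
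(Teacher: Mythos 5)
Your proof is correct and follows essentially the same line of reasoning as the paper: everything rests on the moment-preservation property \eqref{E:smoothing-elements} (which you cleanly recast as the orthogonality $(R, Q - \calS Q)_\Domain = 0$ for $R \in \Poly{\sfk}(\Mesh)$, $Q \in \Lagr{\sfk}{1}$), the $L^2$- and $\bbP$-stability of $\calS$ obtained from \eqref{E:smoothing-stability} via trace and inverse estimates, and the identity $\calS 1 = 1$. The paper phrases the near-best bounds as invariance plus boundedness of each interpolant while you fold the two into a single C\'ea-type identity, and the paper handles part \eqref{P:si1-iv} by a uniform testing argument where you split off the trivial cases, but the key lemma, the $\sqrt\mu$-cancellation through $\mathcal R_\sfs$ in part \eqref{P:si1-iii}, and the duality reduction in part \eqref{P:si1-ii} all coincide with the paper's proof.
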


\begin{proof}
\eqref{P:si1-i}\; We first check that $\calI_\Domain$ indeed maps $\bbD$
into $\bbD_\sfs$. Recall \eqref{E:abstract-spaces-pressure-total} and
\eqref{E:concrete-spaces-pressure-total}. For $\bbD = L^2(\Domain)$
there is nothing to prove. If $\bbD = L^2_0(\Domain)$, then
\eqref{E:interpolation-space-H} implies, for $\widetilde p_\Tot \in \bbD$, that
\begin{equation*}
( \calI_\Domain\widetilde p_\Tot, 1 )_\Domain 
= 
( \widetilde p_\Tot, \calS 1 )_\Domain
=
( \widetilde p_\Tot, 1)_\Domain 
= 
0.
\end{equation*}
Note that the identity $\calS 1
= 1$ follows from \eqref{E:smoothing-stability} with $j=1$. This
confirms the inclusion $\calI_\Domain \widetilde p_\Tot \in
\bbD_\sfs$. A similar argument reveals that $\calI_\Domain$ is
invariant on $\Lagr{\sfk}{1}$. In fact, owing to
\eqref{E:smoothing-elements}, we have, for $\widetilde p_\Tot \in \Lagr{\sfk}{1}$, that  
\begin{equation*}
( \calI_\Domain \widetilde p_\Tot, Q )_\Domain
= 
( \widetilde p_\Tot,\calS Q )_\Domain
=
( \widetilde p_\Tot, Q )_\Domain \qquad \forall Q  \in \Lagr{\sfk}{1}.
\end{equation*}
Finally, for general $\widetilde p_\Tot \in \bbH$, the boundedness \eqref{E:smoothing-stability} of $\calS$ with $j=0$, combined with a discrete trace inequality \cite[Lemma~12.8]{Ern.Guermond:21a} and an inverse estimate \cite[Lemma~12.1]{Ern.Guermond:21a}, yields
\begin{equation*}
\Norm{\calI_\Domain \widetilde p_\Tot}_\Domain
=
\sup_{Q \in \Lagr{\sfk}{1}}
\dfrac{( \widetilde p_\Tot, \calS Q )_\Domain }{\Norm{Q}_\Domain}
\lesssim
\Norm{\widetilde p_\Tot}_\Domain.
\end{equation*}
The claimed estimate follows by the invariance and the boundedness of $\calI_\Domain$.

\eqref{P:si1-ii}\; For $\widetilde m \in \bbP^*$ and $\widehat{M} \in \bbP_\sfs $, we have
\begin{equation*}
\widetilde m - \calP_{\bbP_\sfs }^*\calI_{\bbP^*} \widetilde m
= 
\widetilde m - \calP_{\bbP_\sfs }^* \widehat{M} + \calP_{\bbP_\sfs }^*(\widehat{M} - \calI_{\bbP^*} \widetilde m).
\end{equation*}
We bound the norm of the second summand on the right-hand side with the help of Proposition~\ref{P:verification-assumptions}(3), the inclusion $\bbP_\sfs  \subseteq L^2(\Domain)$ and \eqref{E:smoothing-stability} with $j=1$ and a discrete trace inequality
\begin{equation*}
\begin{split}
\Norm{\calP_{\bbP_\sfs }^*(\widehat{M} - \calI_{\bbP^*} \widetilde m)}_{\bbP^*}
&\eqsim
\Norm{\widehat{M} - \calI_{\bbP^*} \widetilde m}_{\bbP_\sfs ^*}
= 
\sup_{N \in \bbP_\sfs } \dfrac{\langle\widehat M - \calI_{\bbP^*} \widetilde m, N\rangle_{\bbP_\sfs } }{\Norm{N}_{\bbP}}\\
&=
\sup_{N \in \bbP_\sfs } \dfrac{\langle \calP_{\bbP_\sfs }^* \widehat{M} - \widetilde m, \calS N \rangle_\bbP }{\Norm{N}_\bbP}
\lesssim 
\Norm{\calP_{\bbP_\sfs }^* \widehat{M} - \widetilde m}_{\bbP^*}.
\end{split}
\end{equation*}
The first claimed estimate follows by combining this bound with the above identity. The other estimate follows by establishing invariance on $\bbP_\sfs $ as well as boundedness in the $L^2(\Domain)$-norm exactly as in~\eqref{P:si1-i}.

\eqref{P:si1-iii}\; Recall that the operator $\mathcal R_\sfs $ in \eqref{E:interpolation-space-V} is a right inverse of $\calD_\sfs $, i.e. $\calD_\sfs  \mathcal R_\sfs $ is the identity on $\bbD_\sfs$. Then, the first part of the statement directly follows from the definition of $\calI_\bbU$. We verify the second part as in~\eqref{P:si1-i}, i.e. we prove that $\calI_\bbU$ is invariant on $\bbU_\sfs $ and bounded in the $\bbU$-norm. For $\widetilde u \in \bbU_\sfs $, we have $\widehat{U} = \widetilde u$ in \eqref{E:interpolation-space-V}. Then, according to \eqref{E:smoothing-elements} and \eqref{E:concrete-operator-divergence}, we infer
\begin{equation*}
( \calI_\Domain \calD \widetilde u, Q_\Tot )_\Domain 
=
( \calD\widetilde u, \calS Q_\Tot )_\Domain 
=
( \calD \widetilde u, Q_\Tot )_\Domain
=
( \calD_\sfs  \widetilde u, Q_\Tot )_\Domain \qquad \forall Q_\Tot \in \bbD_\sfs. 
\end{equation*}
This implies $\calI_\Domain \calD \widetilde u = \calD_\sfs  \widetilde u$ and, in turn, $\calI_\bbU \widetilde u = \widetilde u$. Next, for general $\widetilde u \in \bbU$, we have
\begin{equation*}
\Norm{\calI_\bbU \widetilde u}_\bbU
\lesssim
\Norm{\widehat{U}}_\bbU 
+
\Norm{\calI_\Domain \calD \widetilde u - \calD_\sfs  \widehat U}_\Domain.
\end{equation*}
Indeed, the boundedness of $\mathcal R_\sfs $ is equivalent to
\ref{P:verification-assumptions}\eqref{P:va-iii}; see
\cite[Lemma~C.42]{Ern.Guermond:21b}. We conclude the proof of (3) with the help of the
boundedness of $\calI_\Domain$, $\calD$ and $\calD_\sfs $ in the
respective norms and by the definition of $\widehat{U}$ as the
$\bbU$-orthogonal projection of $\widetilde u$. 

\eqref{P:si1-iv}\; Recall that $\calP_{\bbD}$ and $\calP_{\bbD_\sfs }$
are the $L^2(\Domain)$-orthogonal projections onto the spaces $\bbD$
and $\bbD_\sfs$, defined in \eqref{E:abstract-spaces-pressure-total}
and \eqref{E:concrete-spaces-pressure-total}, respectively. For
$\widetilde p_\Tot \in L^2(\Domain)$, we have $\calI_\Domain
\calP_{\bbD}\widetilde p_\Tot \in \bbD_\sfs$, in view
of~\eqref{P:si1-i} above. Then, for all $Q_\Tot \in \bbD_\sfs$, it
holds that 
\begin{equation*}
( \calI_\Domain \calP_\bbD \widetilde p_\Tot, Q_\Tot)_\Domain 
=
( \calP_{\bbD} \widetilde p_\Tot, \calS Q_\Tot )_\Domain
=
(  \widetilde p_\Tot, \calS Q_\Tot )_\Domain
=
( \calI_\Domain \widetilde p_\Tot, Q_\Tot )_\Domain.
\end{equation*}
In particular, we can remove $\calP_{\bbD}$ after the second identity,
because $\calS$ maps $\bbD_\sfs$ into $\bbD$, thanks to
\eqref{E:smoothing-elements}. Hence, the first claimed identity is
verified. The proof of the other one is similar. Recall that
$\calP_{\overline \bbP}$ and $\calP_{\bbP_\sfs }$ are the
$L^2(\Domain)$-orthogonal projections onto the spaces
$\overline{\bbP}$ and $\bbP_\sfs $, defined in
\eqref{E:abstract-spaces-pressure} and
\eqref{E:concrete-spaces-pressure}, respectively. For $\widetilde p
\in L^2(\Domain)$ and $N \in \bbP_\sfs $, we have 
\begin{equation*}
( \calI_{\bbP^*} \calP_{\overline \bbP} \widetilde p, N)_\Domain 
=
( \calP_{\overline \bbP} \widetilde p, \calS N )_\Domain
=
( \widetilde p, \calS N )_\Domain
=
( \calI_\Domain \widetilde p, N )_\Domain.
\end{equation*}
This time we could remove $\calP_{\overline \bbP}$ after the second
identity thanks to \eqref{E:smoothing-inclusion}. Thus, also the
second claimed identity is verified.  
\end{proof}

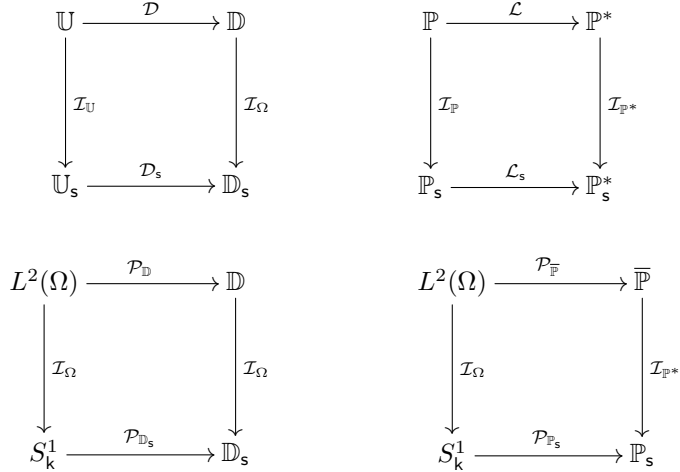
\begin{figure}[ht]
	\[
	\xymatrixcolsep{4pc}
	\xymatrixrowsep{4pc}
	\xymatrix{
		\bbU
		\ar[d]^{\calI_\bbU}
		\ar[r]^{\calD}
		& \bbD
		\ar[d]^{\calI_\Domain} \\
		\bbU_\sfs 
		\ar[r]^{\calD_\sfs }
		& \bbD_\sfs}
	\hspace{50pt}
	\xymatrix{
		\bbP
		\ar[d]^{\calI_\bbP }
		\ar[r]^{\calL}
		& \bbP^*
		\ar[d]^{\calI_{\bbP^*}} \\
		\bbP_\sfs 
		\ar[r]^{\calL_\sfs }
		& \bbP_\sfs ^*}\]\\[5pt]
	\[
	\xymatrixcolsep{4pc}
	\xymatrixrowsep{4pc}
	\xymatrix{
		L^2(\Domain)
		\ar[d]^{\calI_\Domain}
		\ar[r]^{\calP_\bbD}
		& \bbD
		\ar[d]^{\calI_\Domain} \\
		\Lagr{\sfk}{1}
		\ar[r]^{\calP_{\bbD_\sfs }}
		& \bbD_\sfs}
	\hspace{50pt}
	\xymatrix{
		L^2(\Domain)
		\ar[d]^{\calI_\Domain }
		\ar[r]^{\calP_{\overline \bbP}}
		& \overline{\bbP}
		\ar[d]^{\calI_{\bbP^*}} \\
		\Lagr{\sfk}{1}
		\ar[r]^{\calP_{\bbP_\sfs }}
		& \bbP_\sfs }
	\]
	\caption{\label{F:Space-Operator-Diagram} Commutative diagrams
          representing the relations among the space interpolants.} 
\end{figure}

The interpolants defined up to this point are compatible with the
divergence, i.e. with the operators $\calD$ and $\calD_\sfs $, and
with the various $L^2(\Domain)$-orthogonal projections, as summarized
in Figure~\ref{F:Space-Operator-Diagram}. Still, the compatibility
with the Laplacian, i.e. with the operators $\calL$ and $\calL_\sfs $,
is not guaranteed. Since $\calL$ and $\calL_\sfs $ are one-to-one, the
diagram on the upper right corner of
Figure~\ref{F:Space-Operator-Diagram} uniquely defines $\calI_\bbP:
\bbP \to \bbP_\sfs $ as 
\begin{equation}
\label{E:interpolation-space-W}
\calI_\bbP := \calL_\sfs ^{-1} \calI_{\bbP^*} \calL.
\end{equation}

The next lemma reveals that also $\calI_\bbP$ has favorable
approximation properties. To this end, we introduce the following
broken $H^2$-norm 
\begin{equation*}
\label{E:H2-norm}
\Norm{\cdot}_{H^2(\Mesh)}^2 
:=
\kappa\left(  \sum_{\sfT \in \Mesh} \Norm{D^2 \cdot}^2_{L^2(\sfT)}
+
\sum_{\sfF \in \Faces^i \cup \Faces_{{p,N}}} \Norm{ \Avg{\sfh}^{-1/2} \Jump{\Grad \cdot}}^2_{L^2(\sfF)}\right) 
\end{equation*}
as well as the constant
\begin{equation}
\label{E:discrete-elliptic-regularity}
\varepsilon_\sfs := \inf_{N \in \bbP_\sfs } 
\dfrac{\Norm{\calL_\sfs
		N}_{\Domain}}{\Norm{N}_{H^2(\Mesh)}} .
\end{equation}

\begin{remark}[Discrete elliptic regularity]
\label{R:discrete-elliptic-regularity}
The constant $\varepsilon_\sfs$ in \eqref{E:discrete-elliptic-regularity} is certainly
finite, because $\bbP_\sfs $ is finite-dimensional. The size of the constant is related to the control of a $H^2$-like norm by the $L^2$-norm of $\calL_\sfs $, i.e. our discretization of the
Laplacian. Therefore, the constant is a discrete measure of the
elliptic regularity. Note also that $\varepsilon_\sfs$ can be equivalently interpreted as an inf-sup constant
\begin{equation*}
\varepsilon_\sfs = 
\inf_{N \in \bbP_\sfs } \sup_{\widehat{P} \in \bbP_\sfs} 
\dfrac{\langle \calL N, \widetilde P\rangle_{\bbP} }{\Norm{N}_{H^2(\Mesh)}\Norm{\widetilde P}_\Domain}, 
\end{equation*}
where the stability of the standard weak formulation of the Laplacian is analyzed in a nonstandard (i.e. nonsymmetric) setting,
cf. \eqref{E:concrete-operator-laplacian}. We are aware of only a few results regarding the size of $\varepsilon_\sfs$. For $H^1$-conforming Lagrange finite elements and convex $\Domain$, Makridakis established a lower bound in terms of the shape
parameter \eqref{E:shape-constant} of $\Mesh$, provided that the mesh
is not too much graded, see~\cite{Makridakis:18}. It is unclear to us whether the latter condition is necessary or not. When $\Domain$ is not convex, the connection with the elliptic regularity suggests that a lower bound of $\varepsilon_\sfs$ only in terms of shape regularity might be not possible. As for the question raised in Remark~\ref{R:decay-rate}, we postpone further investigation on this point to future work.  
\end{remark}

\begin{lemma}[Space interpolation -- Part 2]
	\label{L:space-interpolation-2}
The operator $\calI_\bbP$ defined above has a unique bounded extension
(denoted by the same symbol) to $\overline{\bbP}$ which satisfies for all $\widetilde p \in \overline{\bbP}$ the
estimate 
\begin{equation*}
\label{E:space-interpolation-2}
\Norm{\widetilde p - \calI_\bbP \widetilde p}_\Domain
\lesssim \varepsilon_\sfs^{-1}
\inf_{\widehat P \in \bbP_\sfs } \Norm{\widetilde p - \widehat P}_\Domain.
\end{equation*}
The hidden constant
depends only on the quantities mentioned in
Remark~\ref{R:hidden-constants}.
\end{lemma}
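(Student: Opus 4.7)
The plan is to mimic the argument of Lemma~\ref{L:space-interpolation-1}: I would establish (i) invariance of \(\calI_\bbP\) on \(\bbP_\sfs\) together with (ii) a uniform stability bound \(\|\calI_\bbP \widetilde p\|_\Domain \lesssim \varepsilon_\sfs^{-1} \|\widetilde p\|_\Domain\) for \(\widetilde p \in \bbP\). The two properties combined with the triangle inequality applied to \(\widetilde p - \calI_\bbP \widetilde p = (\widetilde p - \widehat P) - \calI_\bbP(\widetilde p - \widehat P)\) for arbitrary \(\widehat P \in \bbP_\sfs\) yield the quasi-best approximation estimate on \(\bbP\). The uniform \(L^2\)-stability additionally permits the unique bounded extension to \(\overline{\bbP}\) by density, since \(\bbP\) is dense in \(\overline{\bbP}\) with respect to the \(L^2(\Domain)\)-norm.

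The key step is to rewrite the definition of \(\calI_\bbP\) in a form that is continuous in \(\widetilde p\) with respect to the \(L^2(\Domain)\)-norm. Unfolding \(\calI_\bbP = \calL_\sfs^{-1} \calI_{\bbP^*} \calL\) via~\eqref{E:interpolation-space-W*} and~\eqref{E:concrete-operator-laplacian} leads to
\[
\int_\Domain \kappa \Grad \calI_\bbP \widetilde p \cdot \Grad N
= \int_\Domain \kappa \Grad \widetilde p \cdot \Grad \calS N
\qquad \forall N \in \bbP_\sfs,
\]
for \(\widetilde p \in \bbP\). Since \(\calS N \in H^2(\Domain) \cap \bbP\) by Lemma~\ref{L:smoothing} (in particular~\eqref{E:smoothing-inclusion}), Green's formula reduces the right-hand side to \(-(\widetilde p, \Div(\kappa \Grad \calS N))_\Domain\): the surface term on \(\Gamma_{p,E}\) vanishes because \(\widetilde p_{|\Gamma_{p,E}} = 0\) (from \(\widetilde p \in \bbP\)), and the one on \(\Gamma_{p,N}\) vanishes by~\eqref{E:smoothing-normal}. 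This rewriting is \(L^2\)-continuous in \(\widetilde p\) and will serve both for the stability estimate and for the extension to \(\overline{\bbP}\).

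To prove (ii), I would introduce \(R := \calL_\sfs^{-1} N \in \bbP_\sfs\) (using the Hilbert triplet identification~\eqref{E:Hilbert-triplet-discrete}) and, invoking the symmetry of \(\calL_\sfs\) together with the rewritten identity applied with \(N\) replaced by \(R\), deduce
\[
(\calI_\bbP \widetilde p, N)_\Domain = -(\widetilde p, \Div(\kappa \Grad \calS R))_\Domain.
\]
A Cauchy--Schwarz inequality, the pointwise bound \(|\Div(\kappa \Grad \calS R)| \lesssim \kappa |D^2 \calS R|\), and the smoothing stability~\eqref{E:smoothing-stability} with \(j = 2\) (summed over the simplices using the shape-regularity of \(\Mesh\)) bound \(\|\Div(\kappa \Grad \calS R)\|_\Domain\) in terms of \(\|R\|_{H^2(\Mesh)}\), which in turn is controlled via the discrete elliptic regularity~\eqref{E:discrete-elliptic-regularity}: \(\|R\|_{H^2(\Mesh)} \leq \varepsilon_\sfs^{-1} \|\calL_\sfs R\|_\Domain = \varepsilon_\sfs^{-1} \|N\|_\Domain\). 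Taking the \(L^2\)-dual supremum over \(N \in \bbP_\sfs\) then yields the claimed stability. For (i), a piecewise integration by parts in \(\int_\Domain \kappa \Grad \widehat P \cdot \Grad (\calS N - N)\) shows invariance: the volumetric residual \(\Div(\kappa \Grad \widehat P)\) is piecewise in \(\Poly{\sfk-2}(\sfT)\) and is therefore annihilated by \(\calS N - N\) thanks to~\eqref{E:smoothing-elements}, while the face jumps \(\Jump{\kappa \Grad \widehat P \cdot \Normal}\) are piecewise in \(\Poly{\sfk-1}(\sfF)\) and are annihilated by~\eqref{E:smoothing-faces} on faces in \(\Faces^i \cup \Faces_{p,N}\), with no contribution on \(\Gamma_{p,E}\) since both \(N\) and \(\calS N\) vanish there in a trace sense.

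The main obstacle is the careful boundary and face bookkeeping in these two integration-by-parts steps. What makes the argument go through is the precise compatibility of the smoothing operator with the boundary conditions (via~\eqref{E:smoothing-normal} on \(\Gamma_{p,N}\)) and with the polynomial degrees of the residuals of the Lagrange finite element functions (via~\eqref{E:smoothing-elements} and~\eqref{E:smoothing-faces}), as designed in Lemma~\ref{L:smoothing}. Once these compatibilities are in place, the stability bound and the final approximation estimate follow from the triangle inequality, and the extension to \(\overline{\bbP}\) is obtained by density.
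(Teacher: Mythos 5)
Your proof is correct and follows essentially the same route as the paper: invariance on $\bbP_\sfs$ via the compatibility conditions \eqref{E:smoothing-elements}--\eqref{E:smoothing-faces} and $L^2$-stability via integration by parts using $\calS N \in H^2(\Domain)$, the boundary compatibility \eqref{E:smoothing-inclusion}--\eqref{E:smoothing-normal}, the smoothing estimate \eqref{E:smoothing-stability} with $j=2$, and the discrete elliptic regularity constant \eqref{E:discrete-elliptic-regularity}. The only cosmetic differences are your substitution $R = \calL_\sfs^{-1}N$ in the stability supremum (the paper keeps $N$ and tests against $\calL_\sfs N$) and your formulation of the invariance via $\int_\Domain \kappa \Grad \widehat P \cdot \Grad(\calS N - N) = 0$ rather than the paper's two applications of the element-wise integration-by-parts identity.
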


\begin{proof}
As for Lemma~\ref{L:space-interpolation-1}(2)-(3), we verify the claimed estimate by showing that $\calI_\bbP$ is invariant on $\bbP_\sfs $ and that it is bounded in the $L^2(\Domain)$-norm. Note that the latter property implies also the existence of a unique bounded extension of $\calI_\bbP$ to $\overline{\bbP}$, because $\bbP$ is a dense subspace thereof. Assume first $\widetilde p \in \bbP_\sfs $. Owing to \eqref{E:interpolation-space-W} and \eqref{E:smoothing-inclusion}, we have
\begin{align*}
\langle \calI_\bbP \widetilde p, \calL_\sfs  N\rangle_{\bbP_\sfs}
=
\langle \calL \widetilde p,\calS N\rangle_\bbP
=
\langle \widetilde p,  \calL \calS N\rangle_\bbP \qquad N \in \bbP_\sfs.
\end{align*}
We recall \eqref{E:abstract-operators-concrete} and integrate by parts element-wise \cite[eq. (3.6)]{Arnold.Brezzi.Cockburn.Marini:02}
\begin{equation*}
\langle \calI_\bbP \widetilde p, \calL_\sfs  N\rangle_{\bbP_\sfs}
=
\kappa \left(  -\sum_{\sfT \in \Mesh} \int_{\sfT} (\Delta \widetilde p) \calS N  
+ 
\sum_{\sfF \in \Faces^i \cup \Faces_{{p,N}}}
\int_\sfF (\Jump{\Grad \widetilde p} \cdot \Normal) \calS N \right) . 
\end{equation*}
Then, we make use of \eqref{E:smoothing-elements}-\eqref{E:smoothing-faces} and we integrate by parts back. It results
\begin{equation*}
\langle \calI_\bbP \widetilde p, \calL_\sfs  N\rangle_{\bbP_\sfs}
=
\left\langle \widetilde p, \calL N \right\rangle_\bbP
=
\left\langle \widetilde p, \calL_\sfs  N\right\rangle_{\bbP_\sfs}
\end{equation*}
where the last identity is obtained with the help of \eqref{E:concrete-operator-laplacian}. Since $\calL_\sfs $ is one-to-one, we conclude $\calI_\bbP \widetilde p = \widetilde p$.

Next, for general $\widetilde p \in \bbP$, a similar argument as before yields
\begin{equation*}
\begin{split}
\|\calI_\bbP \widetilde p\|_{\Domain}
&= 
\sup_{N\in \bbP_\sfs }\frac{\langle \calI_\bbP \widetilde p, \calL_\sfs  N\rangle_{\bbP_\sfs}}{\|\calL_\sfs N\|_\Domain}
=
\sup_{N\in\bbP_\sfs }\frac{\langle \widetilde p, \calL \calS N\rangle_\bbP}{\|\calL_\sfs N\|_\Domain}.
\end{split}
  \end{equation*}
Recall the inclusion $\calS N \in H^2(\Domain)$ and \eqref{E:smoothing-inclusion}-\eqref{E:smoothing-normal}. We integrate by parts, then we invoke \eqref{E:smoothing-stability} with $j=2$ and \eqref{E:discrete-elliptic-regularity}
\begin{equation*}
\|\calI_\bbP \widetilde p\|_{\bbH}
= 
\kappa ( \widetilde p,  -\Delta \calS N )_\Domain
\lesssim 
\Norm{\widetilde p}_{\Domain} \Norm{N}_{H^2(\Mesh)}
\leq
\varepsilon_\sfs^{-1}\Norm{\widetilde p}_{\Domain} \Norm{\calL_\sfs  N}_{\Domain}.
  \end{equation*}
The combination of this bound with the previous identity implies the announced boundedness of $\calI_\bbP$.
\end{proof}

\subsection{Space-time interpolation}
\label{SS:interpolation-space-time}
We are in position to combine the interpolants from the two previous sections in order to conclude the proof of Theorem~\ref{T:decoupling}. To this end, let us preliminary recall that the operators acting in time commute with those acting in space. Our main device is the space-time interpolant $\calI: \bbY_1 \to \bbY_{1,\sfs\sft}$ defined as
\begin{equation*}
\label{E:interpolation-space-time}
\calI \widetilde y_1 
:=
\big(\calI_{\bbU}\calJ  \widetilde u,\; 
\calI_{\Domain} \calJ  \widetilde p_{\Tot},\;
\calI_{\bbP} \widetilde{\calJ} \widetilde p,\; \calI_{\bbP^*} \calJ  \widetilde m,\; 
\calI_{\bbP^*} \widetilde m(0)\big)
\end{equation*}
for $\widetilde y_1 = (\widetilde u, \widetilde p_\Tot, \widetilde p, \widetilde m) \in \bbY_1$. 

Our strategy to verify Theorem~\ref{T:decoupling} is as follows. We first establish \eqref{E:decoupling} for $y_1 \in \bbY_1$. Then we extend the result by density, because the right-hand side of \eqref{E:decoupling} is bounded with respect to the norm $\Norm{\cdot}_1$, cf. Theorem~\ref{T:well-posedness-weak-formulation}.

First of all, we recall the definitions of $\bbY_1$ and $\bbY_{1, \sfs\sft}$ in \eqref{E:trial-space} and \eqref{E:trial-space-discrete}, respectively, and notice that $\calI$ is well-defined. In fact, the discussion in the previous sections shows that the interpolation of each component acts as follows
\begin{equation*}
\begin{array}{cl}
L^2(\bbU) 
\stackrel{\calJ }{\longrightarrow}
\bbS^0_\sft (\bbU) 
\stackrel{\calI_\bbU}{\longrightarrow}
\bbS^0_\sft (\bbU_\sfs )
\qquad 
&\text{for $\;\widetilde u$}\\[2pt]
L^2(\bbD) 
\stackrel{\calJ }{\longrightarrow}
\bbS^0_\sft (\bbD) 
\stackrel{\calI_\Domain}{\longrightarrow}
\bbS^0_\sft (\bbD_\sfs)
\qquad 
&\text{for $\;\widetilde p_\Tot$}\\[2pt]
L^2(\bbP) 
\stackrel{\widetilde{\calJ} }{\longrightarrow}
\bbS^0_\sft (\bbP) 
\stackrel{\calI_\bbP}{\longrightarrow}
\bbS^0_\sft (\bbP_\sfs )
\qquad 
&\text{for $\;\widetilde p$}\\[2pt]
H^1(\bbP^*) 
\stackrel{\calJ }{\longrightarrow}
\bbS^0_\sft (\bbP^*) 
\stackrel{\calI_{\bbP^*}}{\longrightarrow}
\bbS^0_\sft (\bbP_\sfs )
\qquad 
&\text{for $\;\widetilde m$}\\[2pt]
\bbP^* \stackrel{\calI_{\bbP^*}}{\longrightarrow}
\bbP_\sfs  &\text{for $\;\widetilde m(0)$}.
\end{array}
\end{equation*}
In particular, the second line makes use of
Lemma~\ref{L:space-interpolation-1}(1) and the last one exploits the
inclusion $\widetilde m \in H^1(\bbP^*) \subseteq C^0(\bbP^*)$. 

In order to verify Theorem~\ref{T:decoupling}, we bound the left-hand
side of \eqref{E:decoupling} by
\begin{equation}
\label{E:decoupling-proof}
\inf_{\widetilde Y_1 \in \bbY_{1, \sfs\sft}}
\Err{\widetilde y_1}{\widetilde Y_1}^2
\leq
\Err{\widetilde y_1}{\calI \widetilde y_1}^2.
\end{equation}
Then, we recall the definition \eqref{E:error-notion} of the error
notion and we bound the six terms therein (denoted by \textit{(i)},
\textit{(ii)}, \ldots~for brevity) one by
one. Lemma~\ref{L:interpolation-time} and Lemma
\ref{L:space-interpolation-1}\eqref{P:si1-iii} state that $\calJ $ and $\calI_\bbU$
are bounded linear projections. Therefore, their combination $\calJ
\calI_\bbU$ is a $L^2(\bbU)$-bounded linear projection onto
$\bbS^0_\sft (\bbU_\sfs )$. This implies 
\begin{equation*}
(i) \lesssim
\inf_{\widehat{U} \in \bbS^0_\sft (\bbU_\sfs )}
\int_0^T \Norm{\widetilde u - \widehat{U}}^2_\bbU \dt.
\end{equation*}
The same reasoning with Lemma~\ref{L:space-interpolation-1}\eqref{P:si1-i} implies 
\begin{equation*}
(ii) \lesssim
\inf_{\widehat{P}_\Tot \in \bbS^0_\sft (\bbD_\sfs)}
\int_0^T \dfrac{1}{\mu}\Norm{\widetilde p_\Tot - \widehat{P}_\Tot}^2_\Domain \dt.
\end{equation*}
Regarding the third term, we first recall \eqref{E:interpolation-time-commutative}
\begin{equation*}
\mathrm{d}_\sft  ( \calI_{\bbP^*} \calJ  \widetilde m, \; \calI_{\bbP^*} \widetilde m(0))
=
\calI_{\bbP^*}
\mathrm{d}_\sft  ( \calJ  \widetilde m, \; \widetilde m(0))
=
\calI_{\bbP^*} \widetilde{ \calJ}  \partial_t \widetilde m,
\end{equation*}
then, we make use of \eqref{E:interpolation-space-W}
\begin{equation*}
\calL_\sfs  \calI_\bbP \widetilde{\calJ}  \widetilde p
=
\calI_{\bbP^*} \widetilde{\calJ}  \calL \widetilde p
\end{equation*}
and, combining the two identities, we arrive at
\begin{equation*}
\mathrm{d}_\sft  ( \calI_{\bbP^*} \calJ  \widetilde m, \; \calI_{\bbP^*} \widetilde m(0)) 
+
\calL_\sfs  \calI_\bbP \widetilde{\calJ} \widetilde p
=
\calI_{\bbP^*} \widetilde{\calJ}  (\partial_t \widetilde m + \calL \widetilde p).
\end{equation*}
The same argument used in the above bounds for \textit{(i)} and \textit{(ii)}, with Lemma~\ref{L:space-interpolation-1}\eqref{P:si1-ii}, implies
\begin{equation*}
(iii) \lesssim
\inf_{\widehat{W} \in \bbS^0_\sft (\bbP_\sfs )}
\int_0^T \Norm{\partial_t \widetilde m + \calL \widetilde p - \calP_{\bbP_\sfs }^* \widehat{W}}^2_{\bbP^*} \dt.
\end{equation*}
For the fourth term, we directly use
Lemma~\ref{L:space-interpolation-1}\eqref{P:si1-ii} to obtain
\begin{equation*}
(iv) \lesssim
\inf_{\widehat M_0\in\bbP_\sfs } \Norm{\widetilde m(0)  - \calP_{\bbP_\sfs }^* \widehat M_0}_{\bbP^*}^2.
\end{equation*}
Concerning the fifth term, we invoke Lemma~\ref{L:space-interpolation-1}\eqref{P:si1-iii}
\begin{equation*}
\begin{split}
&\lambda \calD_\sfs  \calI_\bbU \calJ  \widetilde u
-
\calI_\Domain \calJ  \widetilde p_\Tot 
-
\alpha \calP_{\bbD_\sfs } \calI_\bbP \widetilde{ \calJ}  \widetilde p
=\\
&\qquad \calI_\Domain \calJ ( \calD \widetilde u - \widetilde p_\Tot - \alpha \calP_{\bbD}\widetilde p)
+ 
\alpha \calI_\Domain \calJ  \calP_{\bbD}\widetilde p
- 
\alpha \calP_{\bbD_\sfs } \calI_\bbP \widetilde{ \calJ}  \widetilde p
\end{split}
\end{equation*}
The reasoning from the bound of \textit{(ii)} shows that the first
summand in the right-hand side is a near best approximation of $(\calD
\widetilde u - \widetilde p_\Tot - \alpha \calP_{\bbD}\widetilde p)$
in $\calS^0_\sft (\bbD_\sfs)$. The other two summands can be rewritten
as $\alpha \calP_{\bbD_\sfs }(\calI_\Domain \calJ  - \calI_\bbP
\widetilde{ \calJ} )\widetilde p$, thanks to
Lemma~\ref{L:space-interpolation-1}\eqref{P:si1-iv}. Arguing as
before, we see that both $\calI_\Domain \calJ  \widetilde p$ and
$\calI_\bbP \widetilde{ \calJ} \widetilde p$ are near best
approximations of $\widetilde p$ in $\bbS^0_\sft (\bbP_\sfs )$, the
latter one in view of Lemma~\ref{L:space-interpolation-2}. These
observations, the triangle inequality and the definition
\eqref{E:gamma} of $\gamma$ reveal 
\begin{equation*}
(v) \lesssim \inf_{\widehat{Q}_\Tot \in \bbS^0_\sft (\bbD_\sfs)}
\int_{0}^{T} \dfrac{1}{\lambda + \mu}\Norm{(\calD \widetilde u - \widetilde p_\Tot - \alpha \calP_{\bbD}\widetilde p) - \widehat{Q}_\Tot}^2_\Domain \dt
+ 
\varepsilon_\sfs^{-1}
\inf_{\widehat P \in \bbS^0_\sft (\bbP_\sfs )} \int_0^T \dfrac{1}{\gamma} \Norm{\widetilde p - \widehat{P}}^2_\Domain\dt.
\end{equation*}
The sixth and last term can be treated similarly. We invoke again
Lemma~\ref{L:space-interpolation-1}\eqref{P:si1-iii} and
Lemma~\ref{L:space-interpolation-1}\eqref{P:si1-iv}, so as to obtain 
\begin{equation*}
\begin{split}
&\alpha \calP_{\bbP_\sfs } \calD_\sfs  \calI_\bbU \calJ  \widetilde u 
+
\sigma \calI_\bbP \widetilde{ \calJ}  \widetilde{p}
-
\calI_{\bbP^*} \calJ  \widetilde m\\
& \qquad =
\calI_{\bbP^*} \calJ (\alpha \calP_{\overline \bbP} \calD \widetilde u + \sigma \widetilde p - \widetilde m)
 + \sigma (\calI_\bbP \widetilde{ \calJ}  - \calI_{\bbP^*} \calJ  )\widetilde p.
\end{split}
\end{equation*}
Again, all operators yield near best approximation in the respective
spaces. In particular, for $\calI_{\bbP^*} \calJ $, we make use of the
second part of
Lemma~\ref{L:space-interpolation-1}\eqref{P:si1-ii}. Thus we arrive at
the following estimate 
\begin{equation*}
(vi) \lesssim \inf_{\widehat{Q} \in \bbS^0_\sft (\bbP_\sfs )}
\int_{0}^{T} \gamma \Norm{(\calP_{\overline \bbP}\calD \widetilde u + \sigma \widetilde p - \widetilde m) - \widehat{Q}}^2_\Domain \dt
+ 
\varepsilon_\sfs^{-1}
\inf_{\widehat P \in \bbS^0_\sft (\bbP_\sfs )} \int_0^T \dfrac{1}{\gamma} \Norm{\widetilde p - \widehat{P}}^2_\Domain\dt
\end{equation*}
with the help of the definition \eqref{E:gamma} of $\gamma$. We insert
the above bounds of \textit{(i)}-\textit{(vi)} into
\eqref{E:decoupling-proof}. This establishes \eqref{E:decoupling} for
$\widetilde y_1 \in \bbY_1$ and the right-hand side in the resulting
estimate is bounded in terms of the norm $\Norm{\cdot}_1$,
cf. Theorem~\ref{T:well-posedness-weak-formulation}. Thus, we can
extend the estimate by density from $\bbY_1$ to
$\overline{\bbY}_1$. We conclude by noticing that the bounds of
\textit{(v)} and \textit{(vi)} simplify to 
\begin{equation*}
(v) + (vi) \lesssim 
\varepsilon_\sfs^{-1}\inf_{\widehat P \in \bbS^0_\sft (\bbP_\sfs )} \int_0^T \dfrac{1}{\gamma} \Norm{\widetilde p - \widehat{P}}^2_\Domain\dt
\end{equation*} 
if $\widetilde y_1 = y_1$ is the solution of
\eqref{E:BiotProblem-weak-formulation}.

\section{Numerical results}
\label{S:numerics}
In this section we test the performance of the concrete space discretization proposed in Section~\ref{S:concrete-discretization} on two well-established benchmarks for the Biot's equations. Owing to Corollary~\ref{C:first-order-convergence} and Remark~\ref{R:decay-rate}, we restrict ourselves to the lowest order case $\sfk = 1$, corresponding to quadratic (resp. linear) $H^1$-conforming Lagrange finite elements for the displacement (resp. for the other variables). All experiments have been implemented with the help of the library ALBERTA 3.0, see \cite{Heine.Koester.Kriessl.Schmidt.Siebert,Schmidt.Siebert:05}.

\subsection{Terzaghi's problem}
\label{SS:Terzaghi}
The consolidation of a vertical soil column of total depth $H > 0$ is a classical one-dimensional test case in poroelasticity. The column is subject to compression and it is drained on top, whereas it is impermeable and no displacement occurs at the bottom. Moreover, there is no other force acting in the interior of the column, the initial fluid content equals zero and there are no sources nor sinks. Therefore, in this case, the initial-boundary value problem \eqref{E:BiotProblem} for the Biot's equations reads
\begin{equation}
\label{E:Terzaghi-equations}
\begin{alignedat}{3}
-(2\mu + \lambda) u_{zz} + \alpha p_z = 0,&& 
\qquad \partial_t (\alpha u_z + \sigma p) - \kappa p_{zz} = 0, \quad &  \text{in $(0,H) \times (0,T)$  }\\
-(2\mu + \lambda) u_z + \alpha p = F,&& \qquad p = 0, \quad  &\text{on $\{0\} \times (0,T)$  } \\
u = 0, && \qquad p_z = 0, \quad & \text{on $\{H\} \times (0,T)$  } \\
 && \alpha u_z + \sigma p = 0, \quad & \text{in $(0,H) \times \{0\}$  }
\end{alignedat}
\end{equation}  
with the subscripts $(\cdot)_z$ and $(\cdot)_{zz}$ denoting the partial derivatives with respect to the depth $z \in (0, H)$ and $F$ the modulus of the force acting on top of the column.

Remarkably, the exact solution of \eqref{E:Terzaghi-equations} is explicitly known, cf. \cite[Section~4.1.1]{Phillips:05}. In particular, the pressure equals
\begin{equation}
\label{E:Terzaghi-pressure}
p(z,t) = p_0 \sum_{m=0}^{+\infty} \dfrac{4}{(2m+1)\pi}\sin\Big( \dfrac{(2m+1)\pi z}{2H} \Big) \exp\Big(- \dfrac{(2m+1)^2 \pi^2 \widetilde \gamma k t}{4H^2} \Big)
\end{equation}
with the auxiliary parameters
\begin{equation*}
\label{E:Terzaghi-auxiliary-parameters}
\widetilde \gamma :=  \Big ( \dfrac{\alpha^2}{2\mu + \lambda} + \sigma \Big)^{-1} \qquad \text{and} \qquad p_0 := \dfrac{\alpha \widetilde \gamma F} {2\mu + \lambda}.
\end{equation*}
The displacement can be derived via the relation
\begin{equation*}
\label{E:Terzaghi-displacement}
u_z = \dfrac{\alpha p - F}{2\mu + \lambda}
\end{equation*}
and the other variables are obtained through their definition, cf. Remark~\ref{R:auxiliary-variables}. 

In analogy with \cite[Section~4.1.1]{Phillips:05}, we set 
\begin{equation*}
H = 1, \qquad T = 1, \qquad F = 10^3
\end{equation*}
as well as \footnote{More precisely \cite{Phillips:05} sets the Young's modulus to $E = 10^5$ and the Poisson's ratio to $\nu = 0.2$.}
\begin{equation*}
\mu = 41667, \quad \lambda = 27778, \quad \alpha = 1, \quad \sigma = 0.1, \quad \kappa = 10^{-6}.
\end{equation*}

The explicit knowledge of the exact solution makes this test case particularly suited to observe the error decay rate. Owing to \eqref{E:gamma}, \eqref{E:error-notion}, Remark~\ref{R:augmented-error-notion} and Theorem~\ref{T:decoupling}, we consider the following error notion
\begin{equation}
\label{E:Terzaghi-error}
\int_0^T \left( \| u - U \|_\bbU^2 + \dfrac{1}{\mu}\| p_\Tot - P_\Tot\|^2_\Domain + \sigma\| p - P\|^2_\Domain \right)\dt
\end{equation}
where the pressure $L^2(L^2(\Domain))$-error is included for a better comparison with the literature. For the evaluation of the exact solution, we take into account the first $5000$ summands in the series \eqref{E:Terzaghi-pressure}. 

We first consider a fixed time discretization with $J = 5000$ intervals of equal size and observe the error decay rate with respect to the space discretization. For this purpose, we begin with a mesh in space consisting only of the interval $(0,H)$ and refine it $10$ times by dividing all intervals each time. We report on the left side of Table \ref{TB:Terzaghi-errors} below the evaluation of the error \eqref{E:Terzaghi-error} and the experimental order of convergence (EOC) with respect to the number of degrees of freedom in the space discretization, namely
\begin{equation}
\label{E:Terzaghi-DOFs-space}
\#\textrm{DOFs} = \dim(\bbU_\sfs \times \bbD_\sfs \times \bbP_\sfs \times \bbP_\sfs)
\end{equation}    
cf. \eqref{E:trial-space-discrete}. By increasing the number of refinements, the EOC appears to converge to $1.5$, with only a slight decrease in the last refinement, which is likely due to an insufficient number of intervals in the time discretization. 

\begin{table}[htp]
	\centering
	\begin{tabular}{|c|c|c|}
		\#\textrm{DOFS} &
		\textrm{error} &
		\textrm{EOC} \\[1ex]
		\hline
		&&
		\\[-1.5ex]
		9    & $1.42 \times 10^{-2}$ & \\
		14   & $1.08 \times 10^{-2}$ & 0.63\\
		24   & $7.65 \times 10^{-3}$ & 0.63\\
		44   & $5.41 \times 10^{-3}$ & 0.57\\
		84   & $3.82 \times 10^{-3}$ & 0.54\\
		164  & $2.54 \times 10^{-3}$ & 0.61\\
		324  & $1.33 \times 10^{-3}$ & 0.95\\
		644  & $5.61 \times 10^{-4}$ & 1.26\\
		1284 & $2.14 \times 10^{-4}$ & 1.40\\
		2564 & $7.86 \times 10^{-5}$ & 1.45\\
		5124 & $2.94 \times 10^{-5}$ & 1.42 
	\end{tabular}
	\hspace{40pt}
	\begin{tabular}{|c|c|c|}
		$J$ &
		\textrm{error} &
		\textrm{EOC} \\[1ex]
		\hline
		&&
		\\[-1.5ex]
		5    & $1.17 \times 10^{-4}$ & \\
		10   & $7.23 \times 10^{-5}$ & 0.70\\
		20   & $4.41 \times 10^{-5}$ & 0.71\\
		40   & $2.67 \times 10^{-5}$ & 0.72\\
		80   & $1.61 \times 10^{-5}$ & 0.73\\
		160  & $9.70 \times 10^{-6}$ & 0.73\\
		320  & $5.85 \times 10^{-6}$ & 0.73\\
	\end{tabular}
	\vspace{1ex}
	\caption{Error decay in Terzaghi's problem with respect to the space discretization (left) and the time discretization (right).}
	\label{TB:Terzaghi-errors}
\end{table} 

The decay rate $1.5$ is consistent with the one observed, e.g., in \cite[Section~4.1.1]{Phillips:05}. There the EOC is actually $0.5$ and the difference is explained by the different space discretization and error notion. Indeed, our discretization is of second-order (cf. Remark~\ref{R:decay-rate}) whereas the one in \cite{Phillips:05} is of first-order only. For a theoretical justification note that the exact pressure \eqref{E:Terzaghi-pressure} is singular, because the initial value $p(\cdot, 0) = p_0$ does not satisfy the Dirichlet boundary condition on top of the column. More precisely, for $s \in (0,1)$, we have
\begin{equation*}
\label{E:Terzaghi-EOC-space}
\Norm{p}^2_{L^2(H^{1+s}(\Domain))} \approx
\sum_{m=0}^{+\infty} m^{2s} \int_0^T \exp(-m^2t)\dt \approx \sum_{m=0}^{+\infty} m^{2s -2},
\end{equation*}
where the symbol $\approx$ indicates that we neglect all multiplicative constants apart of those depending on $m$. Therefore, we have $p \in L^2(H^{1+s}(\Domain))$ if and only if $s < 0.5$.

Second, we consider a fixed space discretization with the mesh obtained by $13$ global refinements of the interval $(0,H)$. In other words, the mesh is two levels finer than the last one on the left side of Table~\ref{TB:Terzaghi-errors} and it is such that $\#\mathrm{DOFs} = 40964$. We use time discretizations with $J$ intervals of equal size and observe the error decay for increasing $J$. According to the data on the right side of Table~\ref{TB:Terzaghi-errors}, the EOC is somehow close to $0.75$. Although we were not able to find a similar convergence test in the literature, the observed value is in line with our expectation. Indeed, by arguing as before (see also \cite[Section~7.1]{Schotzau.Schwab:00}), we observe that 
\begin{equation*}
\Norm{p}_{H^s(L^2(\Domain))}^2 \approx \sum_{m=0}^{+\infty} m^{4s-2} \int_0^T \exp(-m^2t) \approx \sum_{m=0}^{+\infty} m^{4s-4}
\end{equation*}
for $s \in (0,1)$. Therefore, we have $p \in H^s(L^2(\Domain))$ if and only if $s < 0.75$.

\subsection{Cantilever bracket problem}
\label{SS:Cantilever-bracket}
We consider the extension to poroelasticity of a well-established two-dimensional test case in linear elasticity. The elastic material initially occupies the region $\Domain = (0,1)^2$, it is clamped on the left side, a uniformly distributed load is applied on the top side and the entire boundary is assumed to be impermeable. Moreover, there is no other force acting in the interior of $\Domain$, the initial fluid content equals zero and there are no sources nor sinks. Therefore, in this case, the initial-boundary value problem \eqref{E:BiotProblem} for the Biot's equations reads
\begin{equation*}
\label{E:Cantilever-bracket-equations}
\begin{alignedat}{3}
-\Div(2\mu \SymGrad u + (\lambda \Div u - \alpha p)\mathrm{I}) = 0,&& 
\;\; \partial_t (\alpha \Div u + \sigma p) - \kappa \Delta p = 0, \;\; &  \text{in $\Domain \times (0,T)$  }\\
u = 0, && \quad \Grad p\cdot \Normal = 0, \;\; & \text{on $\Gamma_{\texttt{L}} \times (0,T)$  } \\
(2\mu \SymGrad u + (\lambda \Div u - \alpha p)\mathrm{I})\Normal = -F,&& \qquad \Grad p \cdot \Normal = 0, \;\;  &\text{on $\Gamma_\texttt{T} \times (0,T)$  } \\
(2\mu \SymGrad u + (\lambda \Div u - \alpha p)\mathrm{I})\Normal = 0,&& \quad \Grad p \cdot \Normal = 0, \;\;  &\text{on $\Gamma_\texttt{RB} \times (0,T)$  }\\
&& \alpha \Div u + \sigma p = 0, \;\; & \text{in $\Domain \times \{0\}$  }
\end{alignedat}
\end{equation*}  
with $\Gamma_\texttt{L}$, $\Gamma_\texttt{T}$ and $\Gamma_\texttt{RB}$ as on the left side of Figure~\ref{F:CantileverBracket-Domain}.

\begin{figure}[htp]
	\hfill
	\subfloat{
		\begin{tikzpicture}
		\draw[blue, line width=2pt] (0,0) -- (4.5,0) -- (4.5,4.5);
		\draw[red, line width=2pt] (0,4.5) -- (4.5,4.5);
		\draw[black, line width=2pt] (0,4.5) -- (0,0);
		\foreach \i in {0,...,18} {
			\draw[black, line width=1pt] (-0.1,{\i/4.0}) -- (-0.4,{\i/4.0 - 0.3});
			\draw[red, line width=1pt, ->] ({\i/4}, 5.0) -- ({\i/4.0}, 4.6);
		}
		\node[blue] at (3.8, 0.6) {\LARGE $\Gamma_\texttt{RB}$};
		\node[black] at (0.4, 2.3) {\LARGE$\Gamma_\texttt{L}$};
		\node[red] at (2.2, 4.0) {\LARGE $\Gamma_\texttt{T}$};
		\end{tikzpicture}
	}
	\hfill
	\subfloat{
		\includegraphics[width=0.4\hsize]{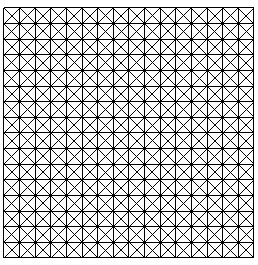}
	}
	\caption{Boundary conditions (left) and computational mesh (right) for the cantilever bracket problem.}
	\label{F:CantileverBracket-Domain}
\end{figure}

In analogy with \cite[Section~10.1]{Phillips:05} and \cite{Yi.13}, we set 
\begin{equation*}
T = 0.005, \qquad F = 1
\end{equation*}
as well as \footnote{More precisely \cite{Phillips:05,Yi.13} set the Young's modulus to $E = 10^4$ and the Poisson's ratio to $\nu = 0.4$.}
\begin{equation*}
\mu = 3571.4, \quad \lambda = 14286, \quad \alpha = 0.93, \quad \sigma = 0, \quad \kappa = 10^{-7}.
\end{equation*}

For the time discretization, we consider $J=5$ intervals of equal size. For the space discretization, we use the mesh on the right side of Figure~\ref{F:CantileverBracket-Domain} involving $5861$ degrees of freedom, whose number is computed as in \eqref{E:Terzaghi-DOFs-space}. Since the exact solution is unknown in this case, we confine ourselves to a qualitative investigation. More precisely, we plot the approximate pressure at the final time along four vertical lines at the following abscissas
\begin{equation*}
\label{E:Cantilever-bracket-abscissas}
x_1 = 0.26, \qquad x_2 = 0.33, \qquad x_3 = 0.4, \qquad x_4 = 0.45.
\end{equation*}
The plot in Figure~\ref{F:CantileverBracket-Plot} is qualitatively similar to the corresponding ones in \cite{Phillips:05,Yi.13} and, in particular, no pressure oscillations are observed. 

\begin{figure}[htp]
	\includegraphics[width=\hsize]{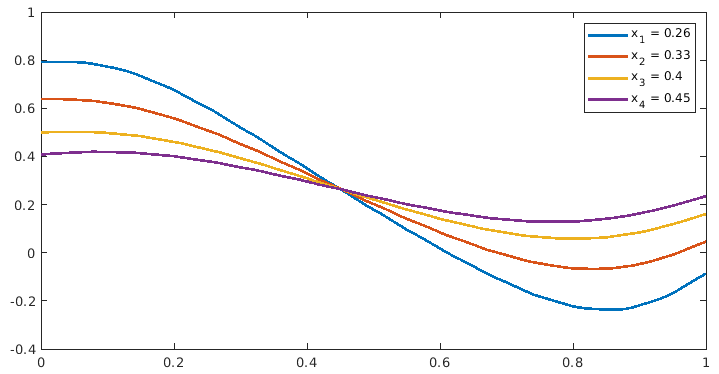}
	\caption{Pressure profile in the cantilever bracket problem along the vertical lines $x=x_j$, $j=1,\dots,4$.}
	\label{F:CantileverBracket-Plot}
\end{figure}

\subsection*{Funding}
Christian Kreuzer gratefully acknowledges
support by the DFG research grant KR 3984/5-2. Pietro Zanotti was supported by the PRIN 2022 PNRR project “Uncertainty Quantification of coupled models for water flow and contaminant transport” (No. P2022LXLYY), financed by the European Union—Next Generation EU and by the GNCS-INdAM project CUP E53C23001670001.

\begin{figure}[htp]
	\includegraphics[width=0.9\hsize]{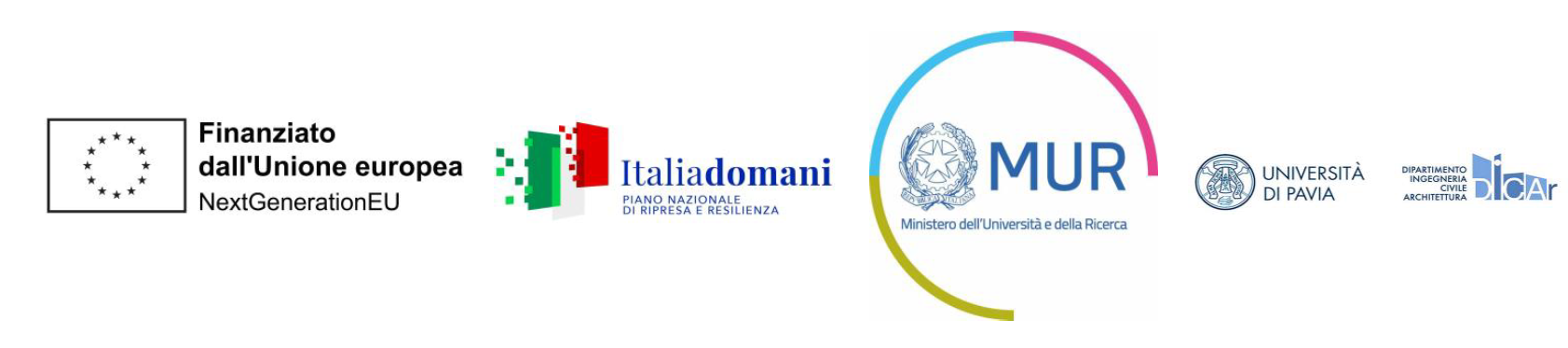}
\end{figure}

\appendix

\section{Proof of Lemma~\ref{L:smoothing}}
\label{S:proof-smoothing}
The construction of a linear operator $\calS: \Lagr{\sfk}{1} \to H^2(\Domain)$ satisfying all properties listed in Lemma~\ref{L:smoothing} is rather technical but it makes use only of classical tools from finite element analysis. To simplify the discussion as much as possible, we detail the construction only for the lowest degree and dimension, namely $\sfk = 2$ and $\Dim = 2$. This case minimizes the technical aspects but, at the same time, it illustrates all relevant issues. We address the extension to the other cases at the end of the appendix, in Remark~\ref{R:extensions}.

Our construction builds upon two preliminary steps. In the first one, we exhibit an operator mapping $\Lagr{\sfk}{1}$ to $H^2(\Domain)$, which satisfies the first, second and last conditions in Lemma~\ref{L:smoothing}. This can be done by some sort of averaging into a $H^2(\Domain)$-conforming finite element space. Similar operators exist in the literature but typically differ in the boundary conditions, see e.g. \cite{Brenner.Sung:18,Georgoulis.Houston.Virtanen:11}. In the second step we show that the third and fourth conditions in Lemma~\ref{L:smoothing} can be enforced by combinations of smooth bubble functions. Both techniques are common in the a posteriori analysis of (nonconforming) discretizations of fourth-order equations. 

Let us begin with the first step. For the sake of presentation,
we use the abbreviations
\begin{align*}
\Gamma_E=\overline\Gamma_{p,E}
,\quad \Faces_E=\Faces_{p,E}
\qquad\text{and}\qquad\Gamma_N=\overline\Gamma_{p,N}
,\quad \Faces_N=\Faces_{p,N}
\end{align*}
in what follows. The condition \eqref{E:smoothing-inclusion} can be rephrased as 
\begin{equation}
\label{E:smoothing-inclusion-equivalent}
\Lagr{2}{1}\ni Q = 0 \quad \text{on $\Gamma_E$}
\qquad \Longrightarrow \qquad
\calS Q = 0 \quad \text{on $\Gamma_E$} 
\end{equation}
cf. \eqref{E:abstract-spaces-pressure} and
\eqref{E:concrete-spaces-pressure}. The condition
\eqref{E:smoothing-inclusion}  actually requires also the inclusion in $L^2_0(\Domain)$ in some cases, but that can be obtained by enforcing \eqref{E:smoothing-elements}, so we do not care about it here. Hence \eqref{E:smoothing-inclusion-equivalent} and \eqref{E:smoothing-normal} are nothing else than Dirichlet and Neumann boundary conditions on $\Gamma_E$ and $\Gamma_N$, respectively. 

We construct a first operator $\calS_1: \Lagr{2}{1} \to H^2(\Domain)$ by mapping into the so-called HCT space, which is obtained by the finite element shown in Figure~\ref{F:HCT}, see also \cite[Section~6.1]{Ciarlet:78}. The degrees of freedom in this space are the normal derivative at the midpoint $m_\sfF$ of each edge $\sfF \in \Faces$ as well as the evaluation of the function and of its gradient at each vertex. For convenience, we denote by $\Vertices$ the set of all vertices.  

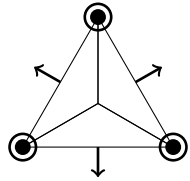
\begin{figure}[ht]
	\centering
	\begin{tikzpicture}
	\coordinate (HCTVer0) at (3, 0);
	\coordinate (HCTVer1) at (5, 0);
	\coordinate (HCTVer2) at (4, 1.73);
	\coordinate (HCTMid0) at (4.5, 0.865);
	\coordinate (HCTMid1) at (3.5, 0.865);
	\coordinate (HCTMid2) at (4,0);
	\coordinate (HCTBar)  at (4, 0.58);
	\draw (HCTVer0)--(HCTVer1) -- (HCTBar) -- (HCTVer0);
	\draw (HCTVer1)--(HCTVer2) -- (HCTBar) -- (HCTVer1);
	\draw (HCTVer2)--(HCTVer0) -- (HCTBar) -- (HCTVer2);
	\fill (HCTVer0) circle (3pt);
	\fill (HCTVer1) circle (3pt);
	\fill (HCTVer2) circle (3pt);
	\draw[line width=1pt] (HCTVer0) circle (5pt);
	\draw[line width=1pt] (HCTVer1) circle (5pt);
	\draw[line width=1pt] (HCTVer2) circle (5pt);
	\draw[->, line width=1pt] (HCTMid0) -- ++(0.35, 0.2);
	\draw[->, line width=1pt] (HCTMid1) -- ++(-0.35, 0.2);
	\draw[->, line width=1pt] (HCTMid2) -- ++(0, -0.4);
	\end{tikzpicture}
	\caption{Schematic representations of the HCT finite element}
	\label{F:HCT}
\end{figure}

\begin{subequations}
  \label{E:averaging}
  For the normal derivative at the edges, we set
\begin{equation}
\label{E:averaging-normal-derivative}
\Grad \calS_1 Q(m_\sfF) \cdot \Normal
=
\begin{cases}
0 & \text{if $\sfF \in \Faces_N$,}\\
\Grad Q_{|\sfT_\sfF}(m_\sfF)\cdot \Normal & \text{otherwise,} 
\end{cases}
\qquad \sfF \in \Faces
\end{equation}
where $\sfT_\sfF \in \Mesh$ is a (arbitrary but fixed) triangle in the mesh containing $\sfF$. For the point values, we just exploit the continuity of $Q$ and take
\begin{equation}
\label{E:averaging-point-values}
\calS_1 Q (\sfv)  = Q(\sfv),
\qquad \sfv \in \Vertices.
\end{equation}
We set the gradient at the interior vertices in analogy with the second case in \eqref{E:averaging-normal-derivative} to
\begin{equation}
\label{E:averaging-gradient-interior}
\Grad \calS_1 Q(\sfv) = \Grad Q_{|\sfT_\sfv}(\sfv), 
\qquad \sfv \in \Vertices \cap \Domain
\end{equation}
where $\sfT_\sfv \in \Mesh$ is a (arbitrary but fixed) triangle in the mesh containing $\sfv$. 
\end{subequations}

\begin{figure}[ht]
	\centering
	\subfloat[]{
		\begin{tikzpicture}
		\draw[line width = 2pt, color = red] (0,0)--(1,1.5);
		\draw[line width = 2pt, color = red] (1,1.5)--(2,3);
		\fill (1,1.5) circle (3pt);
		\node[red] at (0.2, 1) {$\Gamma_E$};
		\node[red] at (1.2, 2.5) {$\Gamma_E$};
		\node at (1.2, 1.3) {$\sfv$};
		\end{tikzpicture}
	}
	\hspace{60pt}
	\subfloat[]{
		\begin{tikzpicture}
		\draw[line width = 2pt, color = blue] (0,0)--(1,1.5);
		\draw[line width = 2pt, color = blue] (1,1.5)--(2,3);
		\fill (1,1.5) circle (3pt);
		\node[blue] at (0.2, 1) {$\Gamma_N$};
		\node[blue] at (1.2, 2.5) {$\Gamma_N$};
		\node at (1.2, 1.3) {$\sfv$};
		\end{tikzpicture}
	}
	\hspace{60pt}
	\subfloat[]{
		\begin{tikzpicture}
		\draw[line width = 2pt, color = red] (1,0)--(1,1.5);
		\draw[line width = 2pt, color = blue] (1,1.5)--(2.5,1.5);
		\fill (1,1.5) circle (3pt);
		\node[red] at (0.6, 0.7) {$\Gamma_E$};
		\node[blue] at (1.8, 1.9) {$\Gamma_N$};
		\node at (1.2, 1.2) {$\sfv$};
		\end{tikzpicture}
	}
	\\
	\subfloat[]{
		\begin{tikzpicture}
		\draw[line width = 2pt, color = red] (0,0)--(1,1.5);
		\draw[line width = 2pt, color = red] (1,1.5)--(2.5,1.5);
		\fill (1,1.5) circle (3pt);
		\node[red] at (0.2, 1) {$\Gamma_E$};
		\node[red] at (1.8, 1.9) {$\Gamma_E$};
		\node at (1.1, 1.2) {$\sfv$};
		\end{tikzpicture}
	}
	\hspace{40pt}
	\subfloat[]{
		\begin{tikzpicture}
		\draw[line width = 2pt, color = blue] (0,0)--(1,1.5);
		\draw[line width = 2pt, color = blue] (1,1.5)--(2.5,1.5);
		\fill (1,1.5) circle (3pt);
		\node[blue] at (0.2, 1) {$\Gamma_N$};
		\node[blue] at (1.8, 1.9) {$\Gamma_N$};
		\node at (1.1, 1.2) {$\sfv$};
		\end{tikzpicture}
	}
	\hspace{40pt}
	\subfloat[]{
		\begin{tikzpicture}
		\draw[line width = 2pt, color = red] (0,0)--(1,1.5);
		\draw[line width = 2pt, color = blue] (1,1.5)--(2.5,1.5);
		\fill (1,1.5) circle (3pt);
		\node[red] at (0.2, 1) {$\Gamma_E$};
		\node[blue] at (1.8, 1.9) {$\Gamma_N$};
		\node at (1.1, 1.2) {$\sfv$};
		\end{tikzpicture}
	}
	\caption{Different cases for the definition of the gradient of $\calS_1 Q$ at a boundary vertex $\sfv$. Note that the highlighted edges are aligned in (A)-(B) but not in (D)-(E) and that they meet at a right angle in (C) but not in (F)}
	\label{F:corners}
\end{figure}
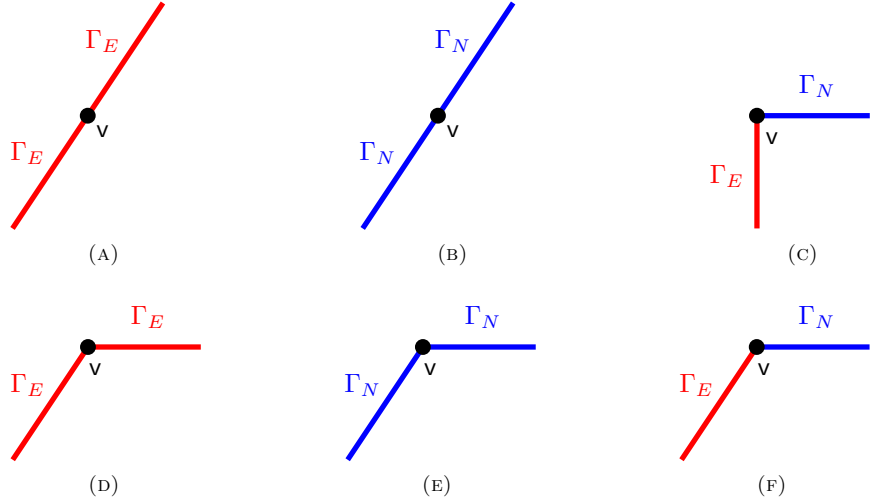 

The definition of the gradient at the other vertices is more involved. Indeed, the boundary conditions suggest to treat the normal and the tangential components differently. Of course, this is especially critical when $\sfv$ is a corner of $\Domain$ and/or if it lies at the intersection of $\Gamma_E$ and $\Gamma_N$. Figure~\ref{F:corners} summarizes all possible cases. We treat all cases simultaneously by Algorithm~\ref{Alg:gradient-boundary-vetices}. The underlying concept is that we first set the gradient to zero in the normal direction(s), so as to comply with \eqref{E:smoothing-normal}. Then, we treat the tangent direction(s) to enforce \eqref{E:smoothing-inclusion-equivalent}. Moreover, special attention is needed to make sure that the gradient is neither over- nor under-determined.

\begin{algorithm}[htp]
	\caption{Gradient of $\calS_1Q$ at boundary vertices}\label{Alg:gradient-boundary-vetices}
	\begin{algorithmic}[1]
		\algrenewcommand\algorithmicensure{\textbf{Provide}:}
		\Require $\sfv \in \Vertices \cap \partial \Domain$ boundary vertex
		\Ensure $\Grad \calS_1 Q(\sfv)$
		\State $\mathfrak{L} \gets \{0\}$
		\ForAll{$\sfF \in \Faces_N$ with $\sfv \in \sfF$}
		\State $\Grad \calS_1 Q(\sfv)\cdot \Normal_{\sfF} \gets 0$
		\State $\mathfrak{L} \gets \mathfrak{L} + \text{span}\{\Normal_{\sfF}\}$
		\EndFor
		\ForAll{$\sfF \in \Faces_E$ with $\sfv \in \sfF$}
		\If{$\Tangent_{\sfF} \notin \mathfrak{L}$}
		\State $\Grad \calS_1 Q(\sfv)\cdot \Tangent_{\sfF} \gets \Grad Q_{|\sfT_\sfF}(\sfv) \cdot \Tangent_{\sfF}$
		\State $\mathfrak{L} \gets \mathfrak{L} + \text{span}\{\Tangent_{\sfF}\}$
		\EndIf
		\EndFor
		\If{$\dim(\mathfrak{L}) = 1$}
		\State choose $\mathsf{w} \in \R^2$ with $\mathsf{w} \perp \mathfrak{L}$
		\State choose $\sfT \in \Mesh$ with $\sfv \in \sfT$
		\State $\Grad \calS_1 Q (\sfv) \cdot \mathsf{w} \gets \Grad Q_{|\sfT}(\sfv) \cdot \mathsf{w}$ 
		\EndIf
	\end{algorithmic}
\end{algorithm}

Within Algorithm~\ref{Alg:gradient-boundary-vetices}, we denote by $\Normal_\sfF$ and $\Tangent_{\sfF}$, respectively, the outward normal unit vector and a tangent unit vector of a boundary edge $\sfF \in \Faces \cap \partial \Domain$. We make use also of the unique triangle $\sfT_\sfF \in \Mesh$ which contains $\sfF$. The set $\mathfrak{L}$ is the linear space spanned by the directions used in the definition of the gradient, so it grows from $\{0\}$ to $\mathbb{R}^2$.

\begin{remark}[Simplified averaging]
\label{R:simplified-averaging}	
In \eqref{E:averaging-normal-derivative}, \eqref{E:averaging-gradient-interior} and in Algorithm~\ref{Alg:gradient-boundary-vetices} we define $\calS_1 Q$ in terms of the restriction of $Q$ to only one triangle in mesh. Therefore, we refer to $\calS_1$ as a \textit{simplified averaging}, so as to distinguish it from classical averaging operators, which take averages over all triangles containing the support of the degree of freedom under examination. Apart of the different definition, the simplified averagings reproduce all relevant properties of the classical ones, cf. \cite[Section~3]{Veeser.Zanotti:19b}.
\end{remark}

\begin{lemma}[Simplified averaging]
\label{L:Averaging}
The linear operator $\calS_1: \Lagr{2}{1} \to H^2(\Domain)$ defined by \eqref{E:averaging} and Algorithm~\ref{Alg:gradient-boundary-vetices} satisfies \eqref{E:smoothing-inclusion-equivalent}, \eqref{E:smoothing-normal} and \eqref{E:smoothing-stability}. In particular, the hidden constant in \eqref{E:smoothing-stability} depends only on the quantities mentioned in Remark ~\ref{R:hidden-constants}.
\end{lemma}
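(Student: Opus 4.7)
The plan is to verify the three properties in turn, starting with the two boundary conditions, which follow from the design of the nodal functionals, and closing with the stability estimate via classical scaling arguments for finite elements.

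For \eqref{E:smoothing-inclusion-equivalent}, fix an edge $\sfF \in \Faces_E$. The trace on $\sfF$ of any HCT function is a univariate cubic polynomial and is thus uniquely determined by the values and tangential derivatives of the function at the two endpoints of $\sfF$. By \eqref{E:averaging-point-values} and the hypothesis $Q \equiv 0$ on $\Gamma_E$, the endpoint values of $\calS_1 Q$ vanish. For the tangential derivatives, Algorithm~\ref{Alg:gradient-boundary-vetices} enforces at each $\sfv \in \sfF$ the identity $\Grad \calS_1 Q(\sfv)\cdot\Tangent_\sfF = \Grad Q_{|\sfT_\sfF}(\sfv)\cdot\Tangent_\sfF$, which equals the tangential derivative of $Q$ along $\sfF$ and therefore vanishes. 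Hence $\calS_1 Q_{|\sfF} \equiv 0$. For \eqref{E:smoothing-normal}, fix $\sfF \in \Faces_N$. The normal derivative along $\sfF$ of an HCT function is a univariate quadratic polynomial, hence determined by its values at the two endpoints and at the midpoint of $\sfF$. By \eqref{E:averaging-normal-derivative} the midpoint value is zero, and by the loop on lines 2--5 of Algorithm~\ref{Alg:gradient-boundary-vetices} the normal components at the two endpoints are zero as well. Hence $\Grad \calS_1 Q \cdot \Normal \equiv 0$ on $\sfF$.

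For the stability estimate \eqref{E:smoothing-stability}, the standard device is to work element by element. Set $\tilde Q := Q - \calS_1 Q$ on $\sfT \in \Mesh$, transform to the reference element, and invoke the equivalence of any two norms on the finite-dimensional HCT space to obtain
\begin{equation*}
\|D^j \tilde Q\|_\sfT^2 \lesssim \sfh_\sfT^{4-2j} \sum_{\mathrm{dof}\in \sfT} |\mathrm{dof}(\tilde Q)|^2,
\end{equation*}
where the sum runs over gradient and normal-derivative nodal functionals, all point-value functionals being annihilated by \eqref{E:averaging-point-values}. By inspection of \eqref{E:averaging-normal-derivative}--\eqref{E:averaging-gradient-interior} and Algorithm~\ref{Alg:gradient-boundary-vetices}, every surviving functional value takes the form of a difference $\Grad Q_{|\sfT_1}(\mathsf z)\cdot\Normal - \Grad Q_{|\sfT_2}(\mathsf z)\cdot\Normal$ at a point $\mathsf z$ (vertex or midpoint) of some face separating $\sfT_1$ and $\sfT_2$, or the normal-component value $\Grad Q_{|\sfT}(\mathsf z)\cdot\Normal$ at a node lying on a Neumann face. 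In either case it can be identified with the pointwise evaluation of $\Jump{\Grad Q}\cdot\Normal$ on some face $\sfF \in \Faces^i \cup \Faces_N$ touching $\sfT$.

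The principal technical point will be handling, inside a unified estimate, the gradient functionals at boundary vertices where Algorithm~\ref{Alg:gradient-boundary-vetices} may set components using $\sfT_\sfF$ or $\sfT_\sfv$ chosen far from $\sfT$: here a telescoping argument converts the difference $\Grad Q_{|\sfT}(\sfv) - \Grad Q_{|\sfT_*}(\sfv)$ into a sum of pointwise jumps $\Jump{\Grad Q}(\sfv)\cdot\Normal$ along a chain of faces sharing $\sfv$, whose length is controlled by the shape constant \eqref{E:shape-constant}. Since $\Jump{\Grad Q}\cdot\Normal$ is a polynomial of bounded degree on each face, its pointwise values are controlled by $L^2$-norms via the inverse-type estimate $|\Jump{\Grad Q}\cdot\Normal(\mathsf z)|^2 \lesssim \sfh_\sfF^{-1} \|\Jump{\Grad Q}\cdot\Normal\|_{L^2(\sfF)}^2$. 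Combining all scalings then yields the factor $\sfh_\sfT^{4-2j}\cdot\sfh_\sfF^{-1} \eqsim \Avg{\sfh}^{3-2j}$ appearing in \eqref{E:smoothing-stability}, and shape regularity ensures that the number of faces contributing to each $\sfT$ is uniformly bounded, concluding the proof.
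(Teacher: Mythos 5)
Your proof is essentially correct and takes the same route as the paper: the two boundary properties are established by observing that the HCT trace (resp. normal derivative) on an edge is a cubic (resp. quadratic) with enough nodal conditions forced to zero, and the stability estimate is obtained by the local scaling identity for the HCT functionals, bounding the surviving gradient and normal-derivative dofs by jump evaluations and concluding with the one-dimensional inverse estimate on each edge. The telescoping step you flag as the principal technical point is indeed the part the paper labels as tedious and delegates (per case of Figure~\ref{F:corners}) to the classical argument in \cite[Lemma~3.1]{Veeser.Zanotti:19b}; your sketch of the chain of faces around a vertex, controlled by shape regularity, is the right mechanism there.
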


\begin{proof}
Let $\sfF \in \Faces \cap \partial \Domain$ be a boundary face. For $Q \in \Lagr{2}{1}$, the restriction of $\calS_1 Q$ to $\sfF$ is a third-order polynomial, owing to the definition of the HCT element. If $Q = 0$ on $\Gamma_E$ and $\sfF \in \Faces_E$, then both $\calS_1Q$ and its tangential derivative along $\sfF$ vanish at the endpoints of $\sfF$, thanks to \eqref{E:averaging-point-values} and Algorithm~\ref{Alg:gradient-boundary-vetices}. This proves that $\calS_1Q$ vanishes on $\sfF$ and verifies \eqref{E:smoothing-inclusion-equivalent}. Similarly, note that the normal derivative of $\calS_1Q$ on $\sfF$ is a second-order polynomial. If $\sfF \in \Faces_N$, then the normal derivative vanishes on $\sfF$, because it vanishes at the midpoint and at the endpoints of $\sfF$, in view of \eqref{E:averaging-normal-derivative} and Algorithm~\ref{Alg:gradient-boundary-vetices}. This verifies \eqref{E:smoothing-normal}.  
	
Regarding the claimed stability estimate, let $\sfT \in \Mesh$. The scaling properties of the HCT basis functions (see e.g. \cite[Lemma~3.14]{Veeser.Zanotti:19b}) imply
\begin{equation*}
\begin{split}
&\Norm{D^j(Q - \calS_1Q)}^2_\sfT
\eqsim
\sfh_{\sfT}^{2-2j} \sum_{\sfv \in \Vertices \cap \sfT}  | (Q_{|\sfT} - \calS_1 Q)(\sfv) |^2 \\
&\qquad +
\sfh_{\sfT}^{4-2j} \Big( \sum_{\sfv \in \Vertices \cap \sfT}    |\Grad (Q_{|\sfT} - \calS_1 Q)(\sfv) |^2 
+
\sum_{\sfF \in \Faces  \cap \sfT}| \Grad (Q_{|\sfT} - \calS_1 Q)(m_\sfF)\cdot \Normal  |^2\Big) 
\end{split}
\end{equation*}  	
for $j \in \{0,1,2\}$. The first summand on the right-hand side vanishes because of \eqref{E:averaging-point-values}, while the other ones are bounded in terms of jumps. Indeed, the definition \eqref{E:averaging-normal-derivative} immediately yields
\begin{equation*}
| \Grad (Q_{|\sfT} - \calS_1 Q)(m_\sfF)\cdot \Normal_{|\sfF}  |
\leq
\begin{cases}
0 & \text{if $\sfF \in \Faces_E$,}\\
|\Jump{\Grad Q}_{|\sfF} \cdot \Normal| & \text{otherwise,} 
\end{cases}
\end{equation*}
for all edges $\sfF \in \Faces \cap \sfT$.
Similarly, according to Algorithm~\ref{Alg:gradient-boundary-vetices}, we have
\begin{equation*}
|\Grad (Q_{|\sfT} - \calS_1 Q)(\sfv) |
\leq 
\sum_{\sfF \in \Faces^i \cup \Faces_{{N}}, \sfv \in \sfF  } | \Jump{\Grad Q}_{|\sfF}\cdot \Normal|.
\end{equation*}
for all vertices $\sfv \in \Vertices \cap \sfT$. The proof of this
estimate is tedious, as it must be verified for each case in
Figure~\ref{F:corners}, but it builds only upon a classical argument
for (simplified) averaging operators, see
e.g. \cite[Lemma~3.1]{Veeser.Zanotti:19b}. We insert the two bounds
above into the previous equivalence. Then, we obtain
\eqref{E:smoothing-stability} by an inverse estimate on the edges,
noticing that $\sfh_{\sfT} \eqsim \Avg{h}_{|\sfF}$ for all edges \(\sfF\)
touching $\sfT$. 
\end{proof}

For the second preliminary step in our construction, we make use of
bubble functions. In particular, we use `element' bubbles to enforce
\eqref{E:smoothing-elements}. For $\sfT \in \Mesh$, let $b_\sfT \in
\Poly{6}(\sfT)$ be the unique polynomial of degree $6$ in $\sfT$ such
that both $b_\sfT$ and $\Grad b_\sfT$ vanish on $\partial \sfT$ and
$\int_\sfT b_\sfT = 1$, cf. \cite[Section~3.2.5]{Verfuerth:13}. We
extend $b_\sfT$ to an $H^2(\Domain)$ function by zero. 

Similarly, we use `edge' bubbles to enforce \eqref{E:smoothing-faces}
on interior edges. Thus, for $\sfF \in \Faces^i$, we denote by
$\sfT_1$ and $\sfT_2$ the two triangles in the mesh containing
$\sfF$. Let $b_\sfF \in \Poly{8}(\sfT_1 \cup \sfT_2)$ be the unique
polynomial of degree $8$ on $\sfT_1 \cup \sfT_2$ such that both
$b_\sfF$ and $\Grad b_\sfF$ vanish on $\partial (\sfT_1 \cup \sfT_2)$
and $\int_\sfF b_\sfF = 1$, cf. \cite[Section~3.2.5]{Verfuerth:13}. As
before, we extend $b_\sfF$ to an $H^2(\Domain)$ function by zero.  

The condition \eqref{E:smoothing-faces} must be enforced also on the
edges in $\Faces_{{N}}$. Here we must employ bubble functions with
vanishing normal derivative on the respective edge, because of
\eqref{E:smoothing-normal}. Let $\sfT \in \Mesh$ be the unique
triangle in the mesh containing $\sfF$ and define $\sfT^\prime$ by
mirroring $\sfT$ through $\sfF$. We define $b_\sfF$ as before on $\sfT
\cup \sfT^\prime$. Hence, the symmetry of the support implies $\Grad
b_\sfF \cdot \Normal = 0$ on $\sfF$. Then, we extend $b_{\sfF|\sfT}$
to $H^2(\Domain)$ by zero. 

Having all these bubble functions at hand, we define $\calS_2: H^1(\Domain) \to H^2(\Domain)$ by
\begin{equation}
\label{E:bubble-smoother}
\calS_2 Q := \calS_\Faces Q + \calS_\Mesh (Q - \calS_\Faces Q)
\end{equation}
for $Q \in H^1(\Domain)$, where
\begin{equation*}
\label{E:bubble-smoother-faces-elements}
\calS_\Faces Q := \sum_{\sfF \in \Faces \cup \Faces_{{N}}}
\left( \int_\sfF Q \right) b_\sfF
\qquad \text{and} \qquad
\calS_\Mesh Q :=
\sum_{\sfT \in \Mesh} \left( \int_\sfT Q \right) b_\sfT.
\end{equation*}
The next lemma summarizes the properties of $\calS_2$ that are relevant to our purpose.

\begin{lemma}[Bubble operator]
	\label{L:bubble-correction}
	The operator $\calS_2: H^1(\Domain) \to H^2(\Domain)$ defined by \eqref{E:bubble-smoother} satisfies \eqref{E:smoothing-normal},\eqref{E:smoothing-elements} and \eqref{E:smoothing-faces}. Moreover $\calS_2 Q$ vanishes on $\Gamma_E$ and satisfies the estimate 
	\begin{equation*}
	\label{E:bubble-correction-stability}
	\Norm{\calS_2 Q}_\sfT 
	\lesssim
	\Norm{Q}_\sfT + \sfh_{\sfT} \Norm{\Grad Q}_\sfT
	\end{equation*}
	for all $Q \in H^1(\Domain)$ and $\sfT \in \Mesh$. The hidden constant depends only on the quantities mentioned in Remark ~\ref{R:hidden-constants}.
\end{lemma}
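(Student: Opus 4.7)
The plan is to verify each of the six assertions of the lemma in turn, exploiting the strictly localized support of every bubble function involved. First, by construction each element bubble $b_\sfT$ and each edge bubble $b_\sfF$ lies in $H^2(\Domain)$ after extension by zero, since both its value and its gradient vanish on the boundary of the respective patch (the whole $\partial \sfT$ for element bubbles, $\partial(\sfT_1\cup\sfT_2)$ for interior-edge bubbles, and $\partial\sfT\setminus\sfF$ for Neumann-edge bubbles). Hence $\calS_\Faces Q$, $\calS_\Mesh X$, and their composition $\calS_2 Q$ are in $H^2(\Domain)$. The vanishing of $\calS_2 Q$ on $\Gamma_E$ follows from the same support considerations: no bubble in either sum has a nonvanishing trace on an edge in $\Faces_E$, because interior-edge bubbles vanish on $\partial(\sfT_1\cup\sfT_2)$, Neumann-edge bubbles vanish on $\partial\sfT\setminus\sfF$, and element bubbles vanish on $\partial\sfT$. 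The normal-derivative condition~\eqref{E:smoothing-normal} follows analogously: $\Grad b_\sfT\cdot\Normal=0$ on $\partial\sfT$; $\Grad b_\sfF\cdot\Normal=0$ on $\partial(\sfT_1\cup\sfT_2)\cap\Gamma_N$ for interior-edge bubbles; and for a Neumann-edge bubble the symmetric reflection across $\sfF$ forces $\Grad b_\sfF\cdot\Normal=0$ on $\sfF$ itself.

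Next I would verify the moment conditions~\eqref{E:smoothing-elements} and~\eqref{E:smoothing-faces}. Since the gradient of any element bubble $b_\sfT$ vanishes on $\partial\sfT$, the term $\calS_\Mesh(Q-\calS_\Faces Q)$ has vanishing trace on every face and contributes nothing to face integrals, so the face moments of $\calS_2 Q$ reduce to those of $\calS_\Faces Q$. Moreover, edge bubbles originating from faces $\sfF'\neq\sfF$ vanish on $\sfF$ by the respective support condition. Interpreting the schematic formula $(\int_\sfF Q)\,b_\sfF$ as a shorthand for the sum over a local dual basis of $\Poly{\sfk-1}(\sfF)$, one obtains $\int_\sfF \calS_\Faces Q\cdot N=\int_\sfF QN$ for every $N\in\Poly{\sfk-1}(\sfF)$ and every $\sfF\in\Faces^i\cup\Faces_N$, which is~\eqref{E:smoothing-faces}. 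For~\eqref{E:smoothing-elements}, reading $(\int_\sfT X)b_\sfT$ in the same dual-basis sense, one has $\int_\sfT \calS_\Mesh X \cdot N = \int_\sfT XN$ for all $N\in\Poly{\sfk}(\sfT)$, and therefore
\begin{equation*}
\int_\sfT \calS_2 Q\cdot N = \int_\sfT \calS_\Faces Q\cdot N + \int_\sfT (Q - \calS_\Faces Q)\,N = \int_\sfT QN,
\end{equation*}
as required.

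Finally, the local stability $\Norm{\calS_2 Q}_\sfT\lesssim \Norm{Q}_\sfT+\sfh_\sfT\Norm{\Grad Q}_\sfT$ follows from the standard scaling of bubble functions. After the normalizations $\int_\sfT b_\sfT=1$ and $\int_\sfF b_\sfF=1$, shape regularity yields $\Norm{b_\sfT}_\sfT\eqsim \sfh_\sfT^{-1}$ and $\Norm{b_\sfF}_{\sfT}\lesssim \sfh_\sfT^{-1/2}$. Combined with the Cauchy--Schwarz bounds $\bigl|\int_\sfF Q\bigr|\lesssim \sfh_\sfT^{1/2}\Norm{Q}_\sfF$ and $\bigl|\int_\sfT X\bigr|\lesssim \sfh_\sfT\Norm{X}_\sfT$, together with the continuous trace inequality $\Norm{Q}_\sfF^2\lesssim \sfh_\sfT^{-1}\Norm{Q}_{\omega_\sfF}^2+\sfh_\sfT\Norm{\Grad Q}_{\omega_\sfF}^2$, one obtains $\Norm{\calS_\Faces Q}_\sfT\lesssim \Norm{Q}_{\omega_\sfT}+\sfh_\sfT\Norm{\Grad Q}_{\omega_\sfT}$ and then $\Norm{\calS_\Mesh(Q-\calS_\Faces Q)}_\sfT\lesssim \Norm{Q-\calS_\Faces Q}_{\omega_\sfT}$, and summing gives the claimed estimate. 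The main bookkeeping obstacle lies in the moment-matching paragraph: one needs the dual-basis reinterpretation of the schematic definition so as to enforce the full set of moments against $\Poly{\sfk-1}(\sfF)$ and $\Poly{\sfk}(\sfT)$, not merely the constant test functions; all the remaining tools—bubble scaling, trace inequalities, shape regularity—are entirely classical and do not pose any serious difficulty.
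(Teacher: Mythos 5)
Your overall strategy matches the paper's, which is extremely terse (it simply asserts that everything ``holds by construction'' and cites \cite[Lemma~3.8]{Veeser.Zanotti:19b} for the scaling). Two remarks, one in your favour and one against.

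\textbf{The dual-basis reading.} Your observation that the schematic formula \(\bigl(\int_\sfF Q\bigr) b_\sfF\) must be read as a sum over a dual basis of \(\Poly{\sfk-1}(\sfF)\) (and analogously for \(b_\sfT\) against \(\Poly{\sfk}(\sfT)\)) is a genuine and necessary clarification. With only one bubble per face/element and the single normalizations \(\int_\sfT b_\sfT = 1\), \(\int_\sfF b_\sfF = 1\), the moment conditions \eqref{E:smoothing-elements} and \eqref{E:smoothing-faces} would hold only against constants, which is insufficient even for the lowest case \(\sfk=2\) treated in the appendix. The paper's Remark~\ref{R:extensions} hints at ``local problems'' only for higher degree/dimension, but in fact the full set of face and element moments is needed already in the displayed case, so your reinterpretation (equivalently: defining \(\calS_\Faces\), \(\calS_\Mesh\) via local \(L^2\)-problems on faces and elements in the vein of \cite[Lemma~3.8]{Veeser.Zanotti:19b}) is not optional. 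Your moment-matching computation and the support/trace arguments for \(H^2\)-conformity, \eqref{E:smoothing-normal}, and the vanishing on \(\Gamma_E\) are all correct.

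\textbf{A scaling slip.} In \(\Dim=2\) the normalization \(\int_\sfF b_\sfF = 1\) on an edge of length \(\eqsim \sfh_\sfT\) gives \(\|b_\sfF\|_{L^\infty(\sfT)}\eqsim\sfh_\sfT^{-1}\) and hence \(\Norm{b_\sfF}_\sfT\eqsim 1\), not \(\lesssim\sfh_\sfT^{-1/2}\) as you wrote (that exponent is the \(\Dim=3\) scaling, while your Cauchy--Schwarz bound \(|\int_\sfF Q|\lesssim\sfh_\sfT^{1/2}\Norm{Q}_\sfF\) uses \(|\sfF|\eqsim\sfh_\sfT\), i.e.\ \(\Dim=2\)). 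If one tracks your stated constants literally, the chain yields \(\Norm{\calS_\Faces Q}_\sfT\lesssim\sfh_\sfT^{-1/2}\Norm{Q}_{\omega_\sfT}+\sfh_\sfT^{1/2}\Norm{\Grad Q}_{\omega_\sfT}\), which is a factor \(\sfh_\sfT^{-1/2}\) too large. Substituting the correct \(\Norm{b_\sfF}_\sfT\eqsim 1\) (or, in \(\Dim\) dimensions, \(\Norm{b_\sfF}_\sfT\eqsim\sfh_\sfT^{1-\Dim/2}\) paired with \(|\int_\sfF Q|\lesssim\sfh_\sfT^{(\Dim-1)/2}\Norm{Q}_\sfF\)) closes the chain. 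Finally, since all the relevant face and element integrals can be evaluated using the trace of \(Q\) from \(\sfT\) alone, the bound can indeed be stated over \(\sfT\) rather than the patch \(\omega_\sfT\), matching the lemma exactly.
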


\begin{proof}
The properties \eqref{E:smoothing-normal},\eqref{E:smoothing-elements} and \eqref{E:smoothing-faces} and the identity $\calS_2 Q = 0$ on $\Gamma_E$ for $Q \in H^1(\Domain)$ hold true by construction. The local stability estimate follows by the scaling of the bubble functions. We refer to \cite[Lemma~3.8]{Veeser.Zanotti:19b} where a similar result is proved.
\end{proof}

We conclude the construction of the operator $\calS: \Lagr{2}{1} \to H^2(\Domain)$ announced in Lemma~\ref{L:smoothing} by combining the operators $\calS_1$ and $\calS_2$ introduced in the two steps above. More precisely, we set
\begin{equation*}
\label{E:smoothing-definition}
\calS Q := \calS_1 Q + \calS_2 (Q - \calS_1 Q)
\end{equation*}
for $Q \in \Lagr{2}{1}$. The properties of $\calS_1$ and $\calS_2$ listed in Lemmas~\ref{L:Averaging}-\ref{L:bubble-correction} imply that \eqref{E:smoothing-inclusion}-\eqref{E:smoothing-faces} hold true. Regarding the local estimate \eqref{E:smoothing-stability}, note that we have
\begin{equation*}
\Norm{D^j(Q - \calS Q)}_\sfT
\lesssim
\Norm{D^j(Q - \calS_1 Q)}_\sfT
+
\sfh_{\sfT}^{-j} \Norm{Q - \calS_1 Q}_\sfT.
\end{equation*}
for $j \in \{0,1,2\}$ and $\sfT \in \Mesh$, by Lemma~\ref{L:bubble-correction} and inverse estimates. We conclude by invoking the stability of $\calS_1$ established in Lemma~\ref{L:Averaging}.

\begin{remark}[Higher degree/dimension]
\label{R:extensions}
Our construction extends to higher polynomial degree $\sfk \geq 2$ and/or to higher space dimension $\Dim \geq 2$ up to some additional technicalities. The construction of the simplified averaging $\calS_1$ requires $H^2(\Domain)$-conforming spaces, that can be obtained by, e.g., higher-order HCT finite elements for $\Dim = 2$ (cf. \cite[Section~3]{Georgoulis.Houston.Virtanen:11} ) or the virtual elements in \cite{Brenner.Sung:18} for $\Dim = 2,3$. The operator $\calS_2$ employs the same bubble functions mentioned above, but one has to define it via the solution of local problems on the simplices and on the faces of the mesh, in the vein of \cite[Lemma~3.8]{Veeser.Zanotti:19b}. 
\end{remark}

\end{document}